\numberwithin{equation}{section}
\newtheorem{theorem}{Theorem}
\newtheorem{corollary}{Corollary}
\title{Online Estimation and Adaptive Control for a Class of  History Dependent Functional Differential Equations}
\author{ Shirin Dadashi \thanks{sdadashi@vt.du, Graduate Student Department of Mechanical Engineering, Virginia Tech}, Parag Bobade\thanks{paragb4@vt.edu, Graduate Student, Department of Engineering Science and Mechanics, Virginia Tech}, and Andrew J. Kurdila\thanks{kurdila@vt.edu, W.Martin Johnson Professor, Department of Mechanical Engineering, Virginia Tech}}
\date{}
\begin{document}
\maketitle

\begin{abstract}
This paper presents sufficient conditions for the convergence of online estimation methods and  the stability of  adaptive control strategies for a class of history dependent, functional differential equations. The study is motivated by the increasing interest in  estimation and control techniques for  robotic systems whose governing equations include history dependent nonlinearities. The  functional differential equations in this paper are constructed using integral operators that depend on distributed parameters. As a consequence the resulting estimation and control equations  are examples of distributed parameter systems whose states and distributed parameters evolve in finite and infinite dimensional spaces, respectively.    Well-posedness, existence, and uniqueness are discussed for the class of  fully actuated robotic systems with history dependent forces in their governing equation of motion.  By deriving rates of approximation for the class of history dependent operators in this paper, sufficient conditions are derived that guarantee that finite dimensional approximations of the online estimation equations converge to the solution of the infinite dimensional, distributed parameter system.  The convergence and stability of a sliding mode adaptive control strategy for the history dependent, functional differential equations is established using Barbalat's lemma.
\end{abstract}
%Need a different abstract and Introduction.Rephrase
%The references section needs to be edited= font+numbering
\section{Introduction}
\label{sec:intro}
It is typical in  texts that introduce the fundamentals of modeling, stability, and control of robotic systems to assume that the underlying governing equations consist of a set of coupled nonlinear ordinary differential equations. This is a natural assumption when methods of analytical mechanics are used to derive the governing equations for systems composed of rigid bodies connected by ideal joints. A quick perusal of the textbooks \cite{shv2005}, \cite{ssvo2010}, or \cite{lda2004}, for example, and the references therein gives a good account of the diverse collection of approaches that have been derived for this class of robotic system over the past few decades. Theses methods have been subsequently refined by numerous authors.  Over roughly the same period, the technical community has shown a continued interest in systems that are governed by nonlinear, functional differential equations. These methods that helped to define the direction of initial efforts in the study of well-posedness and stability  include  \cite{m1949}, \cite{k1956a},\cite{k1956b}, and their subsequent development is expanded in \cite{driver1962}, \cite{rudakov1974}, \cite{rudakov1978}.  More recently, specific  control strategies for classes of functional differential equations have appeared in \cite{rs2002}, \cite{irs2002}, and  \cite{ilr2010}. 
The research described in some cases above deals with quite general plant models. These can include classes of delay equations and general history dependent nonlinearities. One rich collection of history dependent models includes hysteretically nonlinear systems.  General discussions of nonlinear  hysteresis models can be found in \cite{visintin1994} or \cite{bs1996}, and some authors have studied the convergence and stability of systems with nonlinear hysteresis.  For example,   a synthesis of controllers for single-input / single-output functional differential equations is presented  in \cite{rs2002} and \cite{ilr2010}, and these efforts include a wide class of scalar hysteresis operators. 

The success of adaptive control strategies in classical manipulator robotics, as exemplified by \cite{shv2005}, \cite{lda2004}, \cite{ssvo2010}, can be attributed to a large degree to the highly structured form of the governing system of nonlinear ordinary differential equations.  As is well-known, much of the body of work in adaptive control for robotic systems relies on traditional linear-in-parameters assumptions. 

The purpose of this paper is to explore the degree to which the approaches that have been so fruitful in adaptive control of robotic manipulators can be extended to robotic systems governed by certain history dependent, functional differential equations. Emulating the strategy used for robotic systems modeled by ordinary differential equations, we restrict attention to a class of hysteresis operators that satisfy a linear in {\em distributed} parameters condition. That is, the contribution to the functional differential equations takes the form of a nonlinear, history dependent operator that acts linearly on an  infinite dimensional and unknown distributed parameter. 

We illustrate the class of models that are considered in this paper by outlining a variation on two familiar problems encountered in robotic manipulator dynamics, estimation, and control.  Consider the task of developing a model and synthesizing a controller for a flapping wing, test robot that will be used to study aerodynamics in a wind tunnel.  See \cite{bsb2013} for such a system that has been developed by researchers at Brown University over the past few years.   Dynamics for a ground based flapping wing robot can be derived using analytical mechanics in a formulation that is tailored to the structure of a serial kinematic chain \cite{lda2004}, \cite{shv2005}, \cite{ssvo2010}.  The equations of motion take the form  
\begin{equation}
M(q(t))\ddot{q}(t)+C(q(t),\dot{q}(t))\dot{q}(t)+\frac{\partial{V}}{\partial{q}}=Q_{a}(t,\mu) + {\tau}(t)
\label{eq:robotics}
\end{equation}
where $M(q)(t){\in}{\mathbb{R}}^{N{\times} N}$ is the generalized inertia or mass matrix, $C(q(t),\dot{q}(t)){\in}{\mathbb{R}}^{N{\times} N}$ is a nonlinear matrix that represents Coriolis and centripetal contributions, $V$ is the potential energy, $Q_{a}(t,\mu){\in}{\mathbb{R}}^{N}$ is a  vector of generalized aerodynamic forces,  and ${\tau(t)}{\in}{\mathbb{R}}^{N}$ is the actuation force or torque vector. The generalized forces $Q(t,\mu)$ due to aerodynamic loads are assumed to be expressed in terms of history dependent operators that are carefully  discussed below in Section \ref{sec:history}, and $\mu$ is the { \em distributed parameter}  that defines the specific history dependent operator.   For the current discussion, it suffices to note that the aerodynamic contributions are unknown, nonlinear, unsteady,  and notoriously difficult to characterize.   

We consider two specific sets of equations in this paper that are derived from the robotic Equations \ref{eq:robotics}, both of which have similar form.  We are interested in online identification problems in which we seek to find the final state and distributed parameters from observations of the states of the evolution equation.  We are also interested in control synthesis where we choose the input to drive the system to some desired configuration, or to track a given input trajectory.  To simplify our discussion, and following the standard practice for many control synthesis problems for robotics, we choose the original control input to be a partial feedback linearizing control that that reformats the control problem in a standard form.  In the case of online identification, we choose  the input $\tau=M(q)(u-G_1 \dot{q} - G_0 q)-(C(q,\dot{q})\dot{q} + \frac{\partial V}{\partial q}(q)) $  so that the governing equations take the form 
\begin{equation}
\label{eqn:2}\frac{d}{dt}
\begin{bmatrix}
q(t)\\
\dot{q}(t)
\end{bmatrix}
=
\begin{bmatrix}
0&I\\
-G_0&-G_1
\end{bmatrix}
\begin{bmatrix}
q(t)\\
\dot{q}(t)
\end{bmatrix}
+
\begin{bmatrix}
0 \\
I
\end{bmatrix}
(M^{-1}(q)Q_a(t,\mu) +u(t)).
\end{equation}
in terms of a new input $u$. 
The goal in the online identification problem is to learn the parameters $\mu$ and limiting values $q_\infty,\dot{q}_\infty$ from knowledge of the inputs and states $(u,q,\dot{q})$. 
We are also interested in tracking control problems.  When the desired trajectory is given by $q_d$,   we choose the input $\tau=M(q)(u+\ddot{q}_d-G_1 \dot{e} - G_0 e)-(C(q,\dot{q})\dot{q} + \frac{\partial V}{\partial q}(q))$,and the equations governing the tracking error $e:=q-q_d$ take the form
\begin{equation}
\label{eqn:3}\frac{d}{dt}
\begin{bmatrix}
e(t)\\
\dot{e}(t)
\end{bmatrix}
=
\begin{bmatrix}
0&I\\
-G_0&-G_1
\end{bmatrix}
\begin{bmatrix}
e(t)\\
\dot{e}(t)
\end{bmatrix}
+
\begin{bmatrix}
0 \\
I
\end{bmatrix}
(M^{-1}(e+q_d) Q_a(t,\mu)+u(t))
\end{equation}
In either  of the above two cases, we will show in the next section that the equations  can be written in the general form 
\begin{equation}
\dot{X}(t)=AX(t)+B((\mathcal{H}X)(t)\circ \mu +u(t)).
\label{eq:1st_order}
\end{equation}
where ${A}\in \mathbb{R}^{m\times m}$ is the system matrix, ${B}\in \mathbb{R}^{m\times q}$ is the control input matrix, ${u}(t)\in \mathbb{R}^{q}$ is the corresponding input, and ${(\mathcal{H}X)}(t)$ is a history dependent operator that acts on the distributed parameter ${\mu}$.

\section{History Dependent Operators}
\label{sec:history}
There is a significant body of research to model and study the unsteady aerodynamic phenomena in flapping flight. Many different models have been presented in the last twenty years to study the aerodynamics and control of flapping flight. Numerically intensive computational fluid dynamics (CFD) presents a precise method to simulate and study the unsteady lift and drag aerodynamic forces. Generally CFD methods exploit high dimensional models that incorporate computationally expensive moving boundary techniques for the Navier-Stokes equations. They are powerful tools to explain some of the characteristics of the aerodynamic forces. One of the characteristics that has inspired the approach here is the history dependence of the aerodynamic lift and drag functions. We refer the interested reader to \cite{Dadashi2016} to study this phenomena in detail. Although CFD methods are advantages in several aspects, they suffer from curse of dimensionality which makes them a very unfavorable choice for online control applications. In this section, we model the unsteady aerodynamics using history dependent operators. Moreover, we present a method that provides an alternative to a high dimensional aerodynamic model that typically evolves in a much lower dimensional space. We also study the accuracy of the presented method with respect to the resolution level of the lower dimensional model.

\subsection{A Class of History Dependent Operators}
Methods for modeling history dependent nonlinearities can be formulated using a wide array of approaches. Analytical methods for the study of such systems can be based on ordinary  or partial differential equations, differential inclusions, functional differential equations, delay differential equations, or operator theoretic approaches. See references \cite{kp89},\cite{visintin1994},\cite{shv2005}. This paper  treats evolution equations that are constructed using a specific class of history dependent operators $\mathcal{H}$  that are defined in terms of integral operators constructed from history dependent kernels.  These operators are studied in general in \cite{kp89} and \cite{visintin1994}. In this paper the history dependent operators are mappings 
\[ 
\mathcal{H}:C([0,T),\mathbb{R}^{m}){\rightarrow}
C([0,T),P^*)  
\]
where the $T$ is the final time of an interval under consideration, $m$ is the number of input functions, $q$ is the number of output functions, $P$ is a Hilbert space of distributed parameters and its topological dual space $P^*$. We limit our consideration to input$-$output relationships that take the form
\begin{equation}
y(t)=(\mathcal{H}X)(t){\circ}{\mu}
\label{eq:2.1}
\end{equation}
for each $t\in [0,T)$ where 
$y(t) \in \mathbb{R}^q$,$(\mathcal{H}X)(t) \in P^*$, and $\mu \in P $. 
% More specifically, we have 
% \[ 
% y(t) = \begin{bmatrix} y_1 (t) \\ \vdots \\ y_q (t)  \end{bmatrix} = \begin{Bmatrix} (\mathcal{H}_1 X)(t) \circ \mu \\ \vdots \\ (\mathcal{H}_q X)(t)\circ \mu \end{Bmatrix} 
% \]
% where for $t\in [0,T)$ Each entry is given by 
% $y_i (t) = (\mathcal{H}_i X)(t) \circ \mu$  with $ \mu  \in P$ and $ (\mathcal{H}_i X)(t) \in P^*$ 

The definition of $\mathcal{H}$ in this paper is carried out in several steps. 
All of our history dependent operators $\mathcal{H}$ are defined by a superposition or  weighting of elementary hysteresis kernels $\kappa_i$ that are continuous as mappings $\kappa_i :\Delta \times [0,T) \times C[0,T) \rightarrow C[0,T)$ for $i=1,\ldots,\ell$.   We first define the operator $h_i:C[0,T) \rightarrow C([0,T),P^*)$ 
\begin{equation}
 (h_if)(t) \circ \mu_i := \iint_{\Delta} \kappa_i (s,t,f) \mu_i(s) ds 
 \label{eq:hys_int_op}
\end{equation}
for $\mu_i \in P_i$ and $P=P_1 \times \cdots \times P_\ell$. 
When we consider problems such as in our motivating examples and numerical case studies, we must construct vectors $H$ of  history dependent operators   where we  define the diagonal matrix
$$
(H X)(t) := \begin{Bmatrix} h_1(a(X))(t) & & 0 \\ &\ddots & \\0 & & h_\ell(a(X))(t)  \end{Bmatrix}
$$
for each $t\in [0,T)$ where $a : \mathbb{R}^m \rightarrow  \mathbb{R}$ is some nonlinear smooth map. Finally, our applications to robotics require that we consider 
\begin{equation}
(\mathcal{H}X)(t) = b(X(t))(HX)(t),
\label{eq:hdef1}
\end{equation}
where $b:\mathbb{R}^m \rightarrow \mathbb{R}^{q\times \ell}$ is some nonlinear, smooth map. 
In terms of our entrywise definitions of the input--output mappings, we have
\begin{equation}
y_i(t):= \sum_{j=1}^{l} b_{ij}(X(t))[h_j(a(X))](t)\circ \mu_j
\label{eq:hdef2}
\end{equation}
for $i=1,\cdots, q$. 

In the following discussion, let $\kappa$ be a generic representation of any of the kernels $\kappa_i$ for $i=1,\ldots, \ell$. We choose a typical kernel  $\kappa(s,t,f)$ to be a special case of a  generalized play operator \cite{visintin1994}.  We suppose that $f$ is a piecewise linear function on $[0,t]$ with breakpoints $0=t_0 < t_1 < \cdots < t_N=t$. The output function  $t \mapsto \kappa(s,t,f)$, for a fixed $s=(s_1,s_2) \in \Delta \subset \mathbb{R}^2$ and piecewise linear $f:[0,t] \rightarrow \mathbb{R}$,     is defined 
by the  recursion where $\kappa^{n-1}:=\kappa(s,t_{n-1},f)$ and for $t \in[t_{n-1},t_n]$ we have 
\begin{align*}
\kappa(s,t,f):= \left \{
\begin{array}{ccc}
\max \left \{ \kappa^{n-1}, \gamma_{s_2}(f(t))  \right \} & & f \text{ increasing on }  [t_{n-1},t_n], \\
\min \left \{  \kappa^{n-1}, \gamma_{s_1}(f(t))  \right\} & & f \text{ decreasing on } [t_{n-1},t_n].
\end{array}
\right .
\end{align*}
The recursion above depends on the choice of   the left and right bounding functions $\gamma_{s_1},\gamma_{s_2}$ that are depicted in Figure \ref{fig:fig_play1}. There are given in terms of a single ridge function $\gamma:\mathbb{R} \rightarrow \mathbb{R}$  with
\begin{align}
\gamma_{s_2}(\cdot)&:=\gamma(\cdot-s_2), \notag \\
\gamma_{s_1}(\cdot)&:=\gamma(\cdot-s_1). \label{eq:defgamma}
\end{align}

\begin{figure}[h!]
\centering
\includegraphics[width=.45\textwidth]{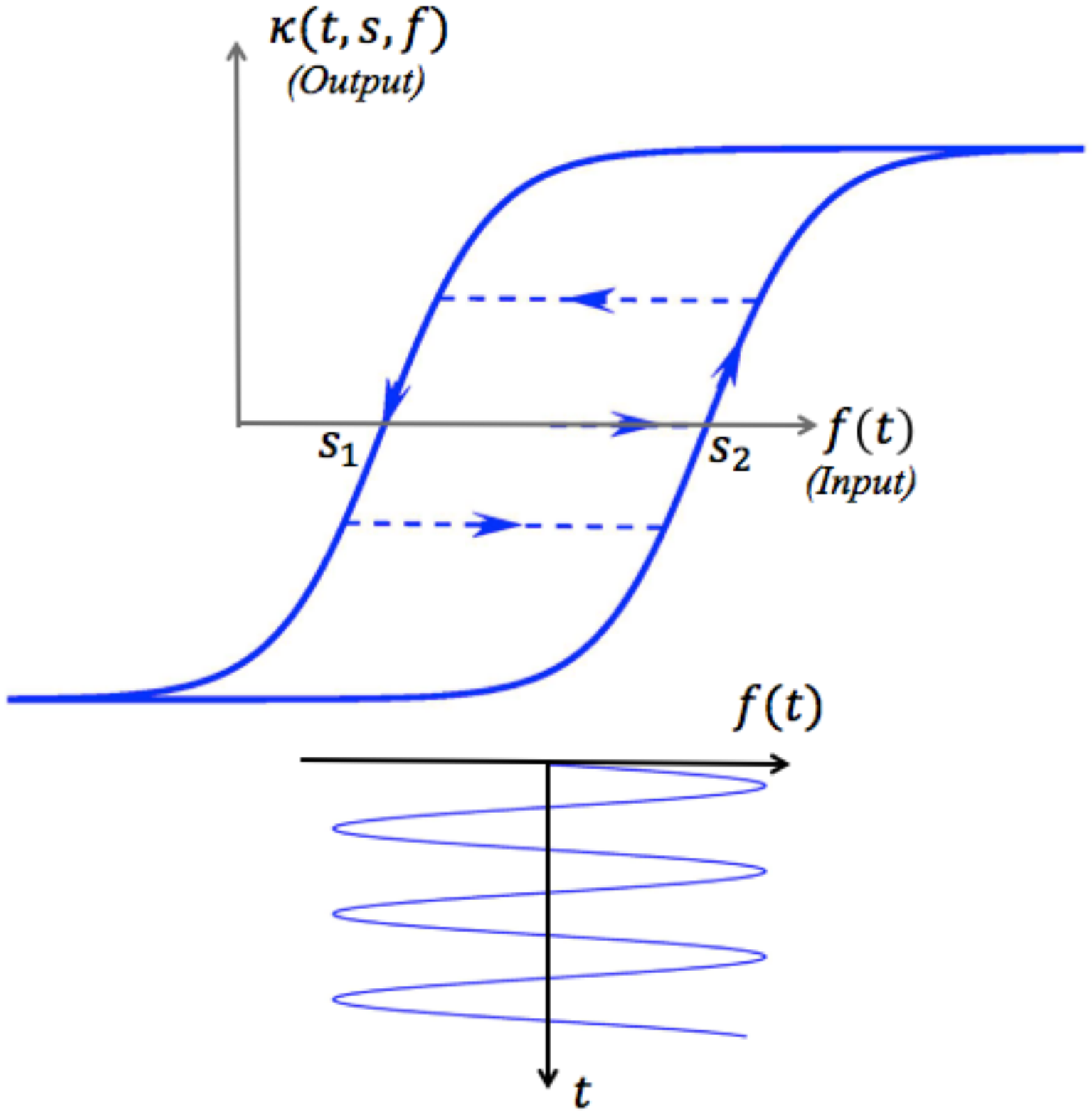}
\caption{Elementary hysteresis kernel $t \rightarrow \kappa(s,t,f)$ for fixed $s=(s_1,s_2)\in \mathbb{R}^2$ and piecewise continuous $f:[0,t) \rightarrow \mathbb{R}$.}
\label{fig:fig_play1}
\end{figure}

As noted in \cite{visintin1994}, the definition of $\kappa$ is extended for any $f\in C[0,T)$ by a continuity and density argument.  

\subsection{Approximation of History Dependent Operators}
\label{subsec:approx_hist}
The integral operator introduced in Equation \ref{eq:hys_int_op} allows for the representation of complex hysteretic response via the superposition or weighting of fundamental kernels $\kappa_i$.  These fundamental kernels, each of which has simple input-output relationships,  play the role of building blocks for modeling much more complex response characteristics. See  \cite{ksw2003} for  studies of  history dependent active materials,  \cite{Dadashi2016} for applications that represent nonlinear aerodynamic loading, or  Section \ref{sec:numerical} of this paper to see an example of richness of this class of models.  In this section we emphasize another important feature of this particular class of history dependent operators. We show that relatively simple approximation methods yields bounds on the error in approximation of the history dependent operator that are uniform in time and over the class of functions $\mu\in P$. 

\subsection{Approximation Spaces $\mathcal{A}^\alpha_2$}
The approximation framework we follow in this paper is based on a straightforward implementation of approximation spaces discussed in detail in   \cite{dl1993} or   \cite{devore1998},  and further  developed by Dahmen in \cite{dahmen1996}. We will see that approximation of the class of history dependent operators under consideration exploit a well-known connection between the class of Lipschitz functions and certain approximation spaces as described in \cite{devore1998}.

\subsection{Wavelets and Approximation Spaces}
Multiresolution Analysis ( MRA )  techniques use results from wavelet theory to model multiscale phenomena. To motivate our discussion, we begin by constructing Haar wavelets in one spatial dimension and subsequently discuss how the process can be easily extended to piecewise constant functions over triangulations in two dimensions. The Haar scaling function is defined as follows:  

\begin{align*}
\phi(x) = \left \{ 
\begin{array}{lll}
1 & \text{if } &   \text{\hspace*{-.5 in}} x   \in [0,1)\\
0 & \text{otherwise.}
\end{array}
\right.
\end{align*}
The dilates and translates $\phi_{j,k}$  of $\phi$ are defined over $\mathbb{R}$ as
\begin{align*}
\phi_{j,k}(x) = 2^{j/2}\phi(2^{j}x-k)= 2^{j/2} 1_{\Delta_{j,k}}(x)
\end{align*}
for $j=0, \cdots, \infty$ and $k \in \mathbb{Z}$.
It is important to note that with this normalization the functions $\{\phi_{j,k}\}_{k=0}^{2^j-1}$ are $L^2[0,1]$ orthonormal so that
\begin{align*}
\langle \phi_{j,k},\phi_{j,l} \rangle = \int_{\mathbb{R}} \phi_{j,k}(x)\phi_{j,l}(x) \mathrm{d}x = \delta_{kl}.
\end{align*}
In this equation $\Delta_{j,k}=\{x|x \in [2^{-j}k,2^{-j}(k+1)]\}$ and $1_{\Delta_{j,k}}$ is the characteristic function of $\Delta_{j,k}$.
For any fixed integer $j$, the $\phi_{j,k}$ form a orthonormal basis that spans the space of piecewise constants over $\{\Delta_{j,k}\}_{k=0}^{2^j-1}$. We let $V_j$  denote space of piecewise constant functions
\begin{align}
V_j = \underset{k=0,\cdots, 2^{j-1}}{\mathrm{span}}\{ \phi_{j,k} \}.
\end{align}
Corresponding to Haar scaling function, the Haar wavelet $\psi$ is defined as
\begin{align*}
\psi(x)=\left \{
\begin{array}{lll}
1 & x \in [0,\frac{1}{2}),\\
-1 & x \in [\frac{1}{2},1).
\end{array}
\right.
\end{align*}
Again, the translates and dilates of $\psi_{j,k}$ of $\psi$ are given by
\begin{align*}
\psi_{j,k}(x) = 2^{j/2}\psi(2^{j}x-k),
\end{align*}
and the complement spaces $W_j$ are defined by $W_j= \underset{k}{\mathrm{span}}\{\psi_{j,k}\}$.
It is straightforward to verify that the spaces $V_{j-1}$ and $W_{j-1}$  form an orthogonal direct sum of $V_j$. That is, we have
\begin{align*}
V_j = V_{j-1} \bigoplus W_{j-1}.
\end{align*}
This process is well-known and standard in the literature as a means of constructing multiscale bases for $L^2[0,1]$. We will follow an analogous strategy to construct multiscale bases over the triangular domain depicted in Figure \ref{fig:refine}. We first denote the characteristic functions over the triangular domain as shown in the Figure \ref{fig:refine} 
\begin{align*}
1_{\Delta_{s}}(x)=\left \{
\begin{array}{lll}
1 & x \in \Delta_s\\
0 & \text{otherwise.}
\end{array}
\right.
\end{align*}
We next consider the regular refinement shown in Figure \ref{fig:refine} where $\Delta_{i_1 i_2}$ is the $i_2$ child of $\Delta_{i_1}$. In general $\Delta_{i_1 i_2\hdots i_m i_{m+1}}$ is the $(m+1)^{st}$ child of $\Delta_{i_1 i_2 \hdots i_m}$. The multiscaling function $\phi_{j,k}$ is defined as
\begin{align*}
\phi_{j,k}(x) = \frac{1_{\Delta_{i_1 i_2 \hdots i_j}}(x)}{\sqrt[]{m(\Delta_{i_1 i_2 \hdots i_j})}}
\end{align*}
where $j$ refers to the level of refinement in the grid.  

\begin{figure}
\centering
\begin{subfigure}[b]{.2\textwidth}
\includegraphics[width=3cm]{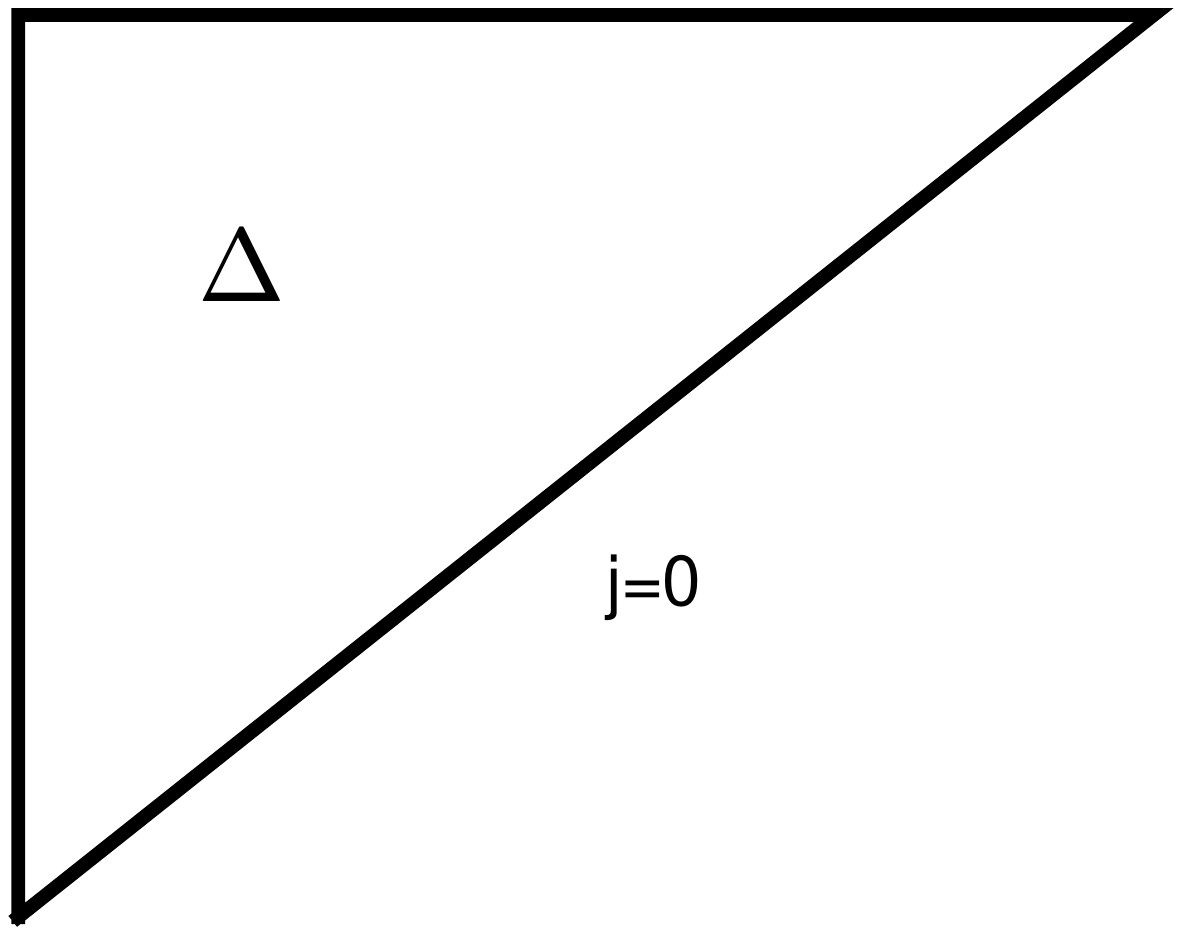}
\end{subfigure}
\begin{subfigure}[b]{.2\textwidth}
\includegraphics[width=3cm]{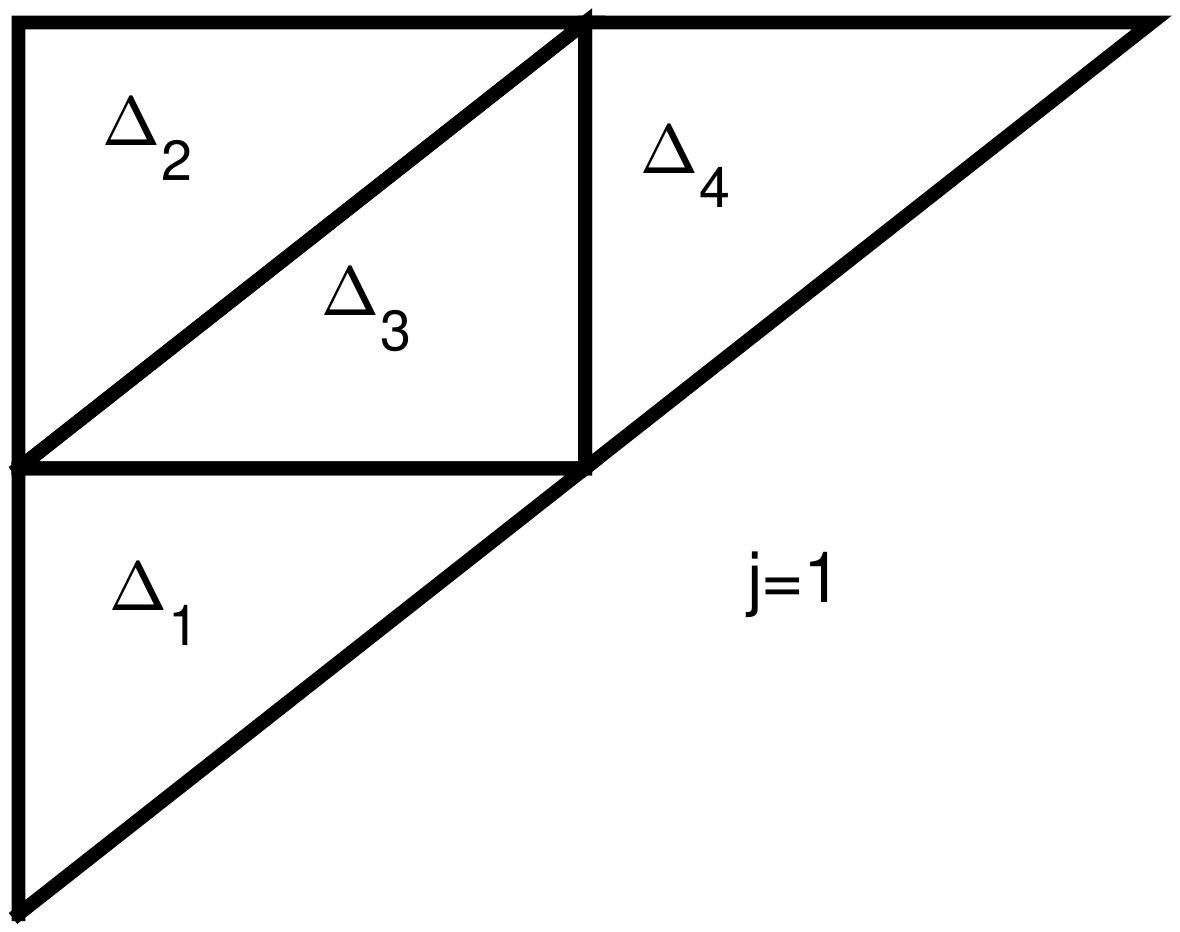}
\end{subfigure}
\begin{subfigure}[b]{.2\textwidth}
\includegraphics[width=3cm]{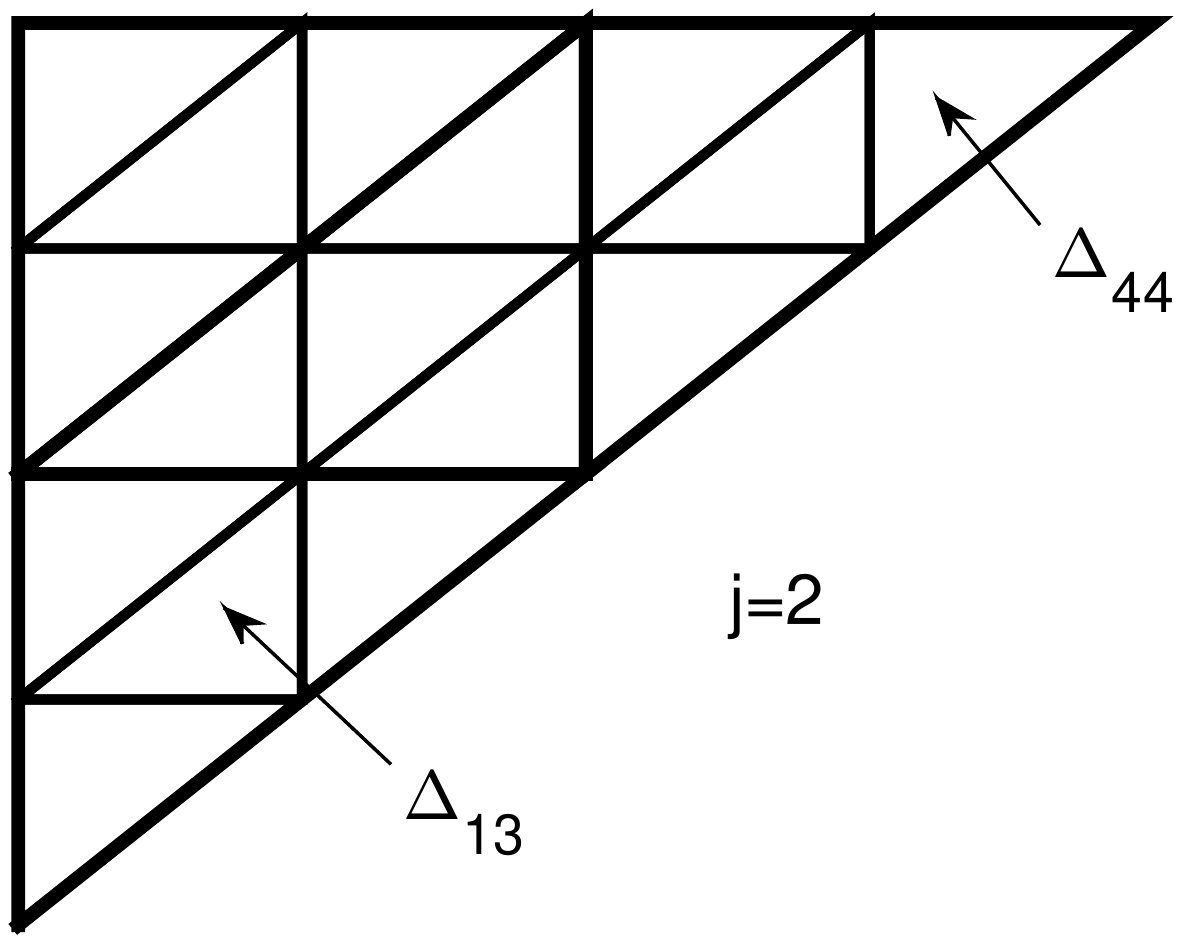}
\end{subfigure}
\begin{subfigure}[b]{.2\textwidth}
\includegraphics[width=3cm]{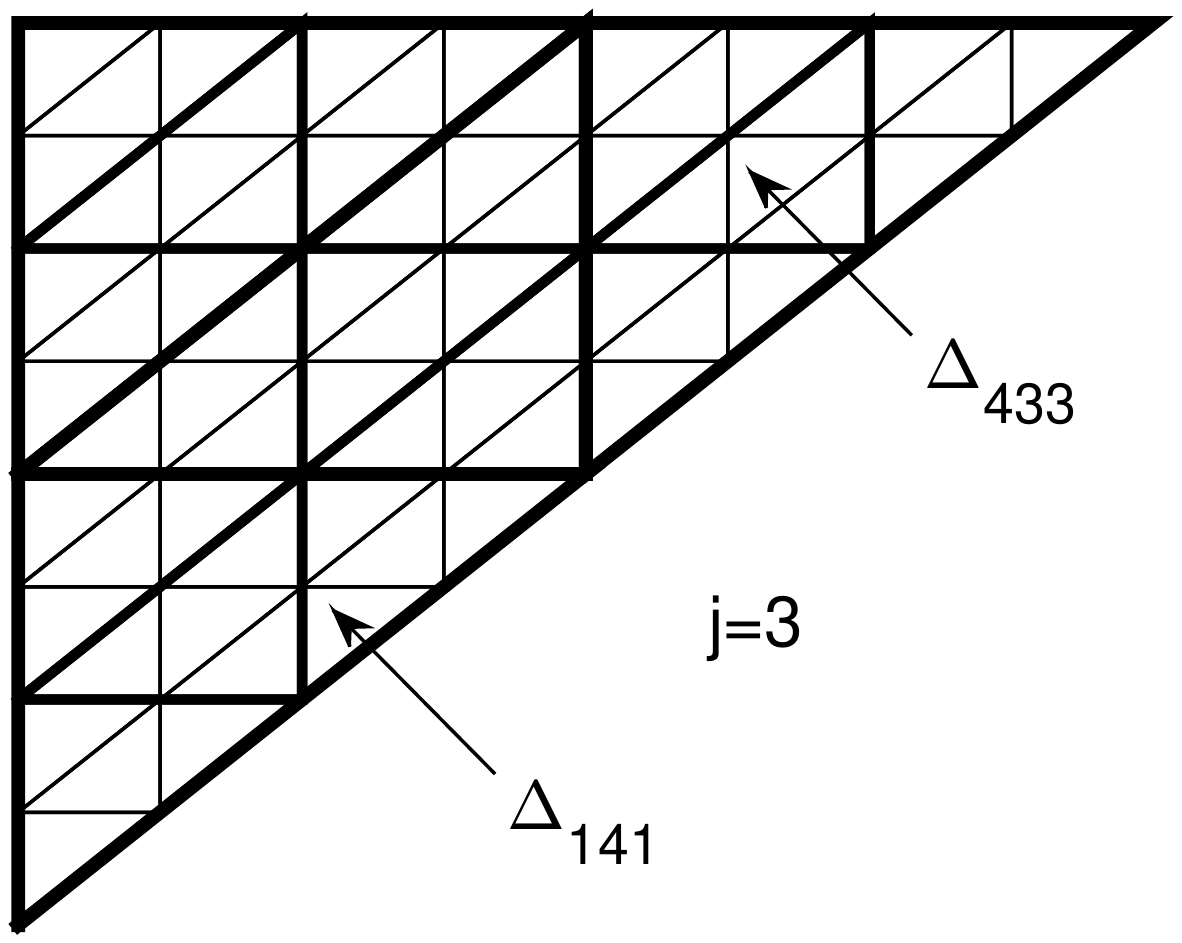}
\end{subfigure}
\caption{Regular refinement process for domain $\Delta$}
\label{fig:refine}
\end{figure}

\iffalse
\begin{figure}[h!]
\includegraphics[width=0.5\textwidth,center]{}
\caption{j level refinement}
\label{fig:refine}
\end{figure}
\fi

Since the history dependent operators $(\mathcal{H}X)(t)$ act on the infinite dimensional space $P=P_1 \times \cdots \times P_\ell$ of functions $\mu=(\mu_1,\ldots, \mu_\ell)$, we need approximations of these operators for computations and applications.  In the discussion that follows we  choose each function $\mu_i\in P_i:=L^2(\Delta)$ where the domain $\Delta\subset \mathbb{R}^2$ is defined as  
$$
\Delta:=\left \{ (s_1,s_2) \in \mathbb{R}^2 \biggl | 
\underline{s}\leq s_1 \leq s_2 \leq \overline{s}\right \}.
$$
The modification of the construction that follows for different domains $\Delta_i$ for the functions $\mu_i \in L^2(\Delta_i)$ is trivial, but  notationally tedious, and we leave the more general case  to the reader. 
 Given the domain $\Delta$ we introduce a regular refinement depicted in Figure \ref{fig:refine} and disscused in more detail in Appendix A.  The set $\Delta$ is subdivided into $\Delta_1,\Delta_2,\Delta_3,\Delta_4$ as shown, and each $\Delta_i$ is subdivided into $\Delta_{i1},\Delta_{i2},\Delta_{i3},\Delta_{i4}$.  Further subdivision recursively introduces the sets  $\Delta_{{i_1}\ldots{i_j}}$  for $i_j=1,\ldots,4$ that are the children of $\Delta_{{i_1}\ldots{i_{j-1}}}$.  

The characteristic functions $1_{\Delta_1},\ldots, 1_{\Delta_4}$ define a collection of multiscaling functions $\phi^1,\ldots, \phi^4$ as defined in \cite{keinert2004}.  
We define the space of piecewise constant functions $V_j$ on grid refinement level $j$ to be the span of the characteristic functions
of the sets $\Delta_{i_1,\ldots,i_j}$, 
so that the dimension of $V_j$ is $4^j$. We denote by $\left \{ \phi_{j,k} \right \}_{k=1,\ldots, 4^j}$ the orthonormal basis obtained from these characteristic functions on a particular grid level, each normalized so that $(\phi_{j,k},\phi_{j,\ell})_{L^2(\Delta)} = \delta_{k,l}$. Each of the basis functions $\phi_{j,k}$ will be proportional  to  the characteristic function $\phi^\ell(2^j x + d)$ for some $\ell\in,{1,2,3,4}$ and displacement vector $d$. It is straightforward in this case \cite{keinert2004} to define  $3$ piecewise constant multiwavelets  $\psi^1, \psi^2, \psi^3$ that  that are used to define functions $\psi_{j,m}$ for $m=1,\ldots,3 \times 4^j$ that span  the complement spaces $W_j= \text{span}\left \{ \psi_{j,m}\right \}_{m=1,\ldots,3\cdot 4^j}$ that satisfy

\begin{align*}
\underbrace{V_j}_{4^j functions} = \underbrace{V_{j-1}}_{4^{j-1} functions}\bigoplus \underbrace{W_{j-1}}_{3 \times 4^{j-1} functions}.
\end{align*}
It is a straightforward exercise to define $L^2(\Delta)-$orthonormal wavelets that span $W_j$ for each $j \in \mathbb{N}_0$, but the nomenclature is lengthly. Since we do not use the wavelets specifically in this paper, the details are omitted. Each function $\psi_{j,m}$ is proportional to one of the three scaled and translated multiwavelet functions and   satisfies the orthonormality conditions
%$\psi_{j,k}$ is proportional to $\psi^\ell(2^j x + d)$ for some $\ell=1,2,3$ and translation $d$, and  each $\psi_{j,k}$ satisfies the orthonormality conditions 
\begin{align*}
\begin{array}{lll}
(\psi_{j,k},\psi_{m,\ell}) = \delta_{j,m}   \delta_{k,\ell} &  & \text{for all } j,k,m,\ell,\\
(\psi_{j,k},\phi_{m,\ell}) = 0 & \quad \quad & \text{for $j\geq m$ and all $k,\ell$.} 
\end{array}
\end{align*}
In the next step, we denote the orthogonal projection onto the span of the piecewise constants defined on a grid of resolution level $j$ by $\Pi_j$ so that
$$\Pi_j:P \rightarrow V_j.$$
Finally, we define the approximation space
$\mathcal{A}^\alpha_2$ in terms of the projectors $\Pi_j$ as 
$$
\mathcal{A}^\alpha_2:= \left  \{
f\in P \biggl |
\|f\|_{\mathcal{A}^\alpha_2}:=
\left ( \sum_{j=0}^\infty 
2^{2\alpha j} \| (\Pi_j-\Pi_{j-1})f \|^2_P
\right )^{1/2}
\right \}.
$$
Note that this is a special case of the more general analysis in \cite{dahmen1996}.
We define our approximation method in terms of one point quadratures 
defined over the triangles $\Delta_{i_1\ldots i_j}$ that constitute the grid of level $j$ that defines $V_j$.  For notational convenience, we  collect all triangles at a fixed level $j$ in the singly indexed set
$$ 
\left \{ \Delta_{j,k} \right \}_{k\in \Lambda_j}:= \left \{ \Delta_{i_1\ldots i_j} \right \}_{i_1,\ldots,i_j \in {1,2,3,4}}
$$
\noindent where $\Lambda_j:=\left \{ k\in \mathbb{N} \quad | 1\leq k \leq 4^j \right \}$, and the quadrature points are chosen such that $\xi_{j,k}\in \Delta_{j,k}$ for $k = 1,\cdots, \Lambda_j$. 
%where the number of indices in the index set $\Lambda_j$ is equal to $4^j$. We choose some set of quadrature coordinates $\left \{ \xi_{j,k} \right \}_{k\in \Lambda_j}$ with $\xi_{j,k}\in \Delta_{j,k}$ for $k=1,\ldots,\Lambda_j$. 
We now can state our principle approximation result for the class of history dependent operators in this paper.
\begin{theorem}
Suppose that  the  function $\gamma$ that defines the history dependent kernel
in Equation \ref{eq:defgamma} is a bounded function in $C^{\alpha}(\mathbb{R})$, and define the approximation $h_j$ associated with the grid level $j$  of the history dependent operator $h$ to be 
\begin{equation*}
(h_jf)(t)\circ \mu:=  \iint_\Delta \left ( \sum_{\ell\in \Gamma_j} 
1_{\Delta_{j,\ell}}(s) \kappa(\xi_{j,\ell}, t,f) 
\right ) \mu(s) ds .
\end{equation*}
Then there is a constant $C>0$ such that 
\begin{equation}
\left |  
(h_jf)(t) \circ \mu - (hf)(t) \circ \mu 
\right | \leq C 2^{-(\alpha+1) j}
\label{eq:error_1}
\end{equation}
for all $f\in C[0,T]$, $t\in [0,T]$, and $\mu\in P$.  If in addition $\mu \in \mathcal{A}^{\alpha+1}_2$, there is a constant $\tilde{C}>0$ such that 
\begin{equation}
\left |  
(h_jf)(t) \circ \Pi_j \mu - (hf)(t) \circ \mu 
\right | \leq \tilde{C} 2^{-(\alpha+1) j}
\label{eq:error_2}
\end{equation}
for all $f\in C[0,T]$ and $t\in [0,T]$. 
\end{theorem}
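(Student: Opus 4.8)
The plan is to reduce both inequalities to two ingredients: a pointwise-in-$s$ regularity estimate for the play kernel $\kappa$ in its shift parameters $s=(s_1,s_2)$, and the decay of the projection error $\mu-\Pi_j\mu$ that is built into the norm of $\mathcal{A}^{\alpha+1}_2$. Introducing the ``kernel defect''
\[
e_j(s,t,f):=\sum_{\ell\in\Gamma_j}1_{\Delta_{j,\ell}}(s)\,\kappa(\xi_{j,\ell},t,f)-\kappa(s,t,f),
\]
the left-hand side of \eqref{eq:error_1} is exactly the pairing $\iint_\Delta e_j(s,t,f)\,\mu(s)\,ds$, and the left-hand side of \eqref{eq:error_2} will be split into a defect pairing against $\Pi_j\mu$ and an exact pairing against $\mu-\Pi_j\mu$. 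Thus everything comes down to controlling $e_j$ cell by cell together with the size of the test function.

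The heart of the argument, and the step I expect to be the main obstacle, is a \emph{uniform} Hölder estimate for the kernel in its shifts: there is a constant $L\le[\gamma]_{C^\alpha}$ with $|\kappa(s,t,f)-\kappa(s',t,f)|\le L\,|s-s'|^\alpha$ for all $s,s'\in\Delta$, all $t\in[0,T]$, and all admissible $f$. I would prove this first for piecewise-linear $f$ with breakpoints $0=t_0<\cdots<t_N=t$ by induction on $n$. The recursion defining $\kappa$ only combines the previous value $\kappa^{n-1}$ with one of $\gamma_{s_2}(f(t))=\gamma(f(t)-s_2)$ or $\gamma_{s_1}(f(t))=\gamma(f(t)-s_1)$ through a $\max$ or a $\min$. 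Since $\max$ and $\min$ are $1$-Lipschitz in each slot, i.e. $|\max\{a,b\}-\max\{a',b'\}|\le\max\{|a-a'|,|b-b'|\}$, and since $|\gamma(f(t)-s_i)-\gamma(f(t)-s_i')|\le[\gamma]_{C^\alpha}|s_i-s_i'|^\alpha$, the Hölder bound is reproduced at each breakpoint with the \emph{same} constant; the base case is the initialization of $\kappa$, which is itself built from $\gamma_{s_1},\gamma_{s_2}$ and hence already obeys the estimate. Because the resulting constant is uniform in $t$ and $f$, the bound survives the continuity/density extension of $\kappa$ to all $f\in C[0,T]$ quoted after the definition of the kernel. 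Carrying the induction through cleanly, so that the memory stored in $\kappa^{n-1}$ and the initialization never cause the constant to degrade, is where the real work lies.

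Granting this lemma, for $s\in\Delta_{j,\ell}$ we get $|e_j(s,t,f)|=|\kappa(\xi_{j,\ell},t,f)-\kappa(s,t,f)|\le L\,(\operatorname{diam}\Delta_{j,\ell})^\alpha$, and since the regular refinement halves the linear cell size at each level, $\operatorname{diam}\Delta_{j,\ell}\le c\,2^{-j}$, so $\|e_j(\cdot,t,f)\|_{L^\infty(\Delta)}\le C\,2^{-\alpha j}$ uniformly in $t,f$. The point at which the stated exponent $\alpha+1$ appears is the cellwise pairing: on $\Delta_{j,\ell}$, whose area satisfies $|\Delta_{j,\ell}|\sim 4^{-j}$, Cauchy--Schwarz gives $|\iint_{\Delta_{j,\ell}}e_j\mu|\le\|e_j\|_{L^\infty(\Delta_{j,\ell})}\,|\Delta_{j,\ell}|^{1/2}\,\|\mu\|_{L^2(\Delta_{j,\ell})}\lesssim 2^{-(\alpha+1)j}\|\mu\|_{L^2(\Delta_{j,\ell})}$, where the extra $2^{-j}$ comes from the cell measure. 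This per-cell bound is the source of \eqref{eq:error_1}; I should flag, however, that the assembly over the $4^{j}$ cells is the delicate accounting, since a naive Cauchy--Schwarz across $\ell$ reintroduces a factor $2^{j}$, so closing the \emph{global} estimate at the rate $2^{-(\alpha+1)j}$ for arbitrary $\mu\in P$ (with $\|\mu\|_P$ absorbed into $C$) requires exploiting the averaging inherent in the one-point quadrature rather than the crude $L^\infty$ defect alone — exactly the structure that is made explicit through $\Pi_j$ in \eqref{eq:error_2}.

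For \eqref{eq:error_2} I would use the split
\[
(h_jf)(t)\circ\Pi_j\mu-(hf)(t)\circ\mu=\big[(h_jf)(t)-(hf)(t)\big]\circ\Pi_j\mu+(hf)(t)\circ(\Pi_j\mu-\mu),
\]
where the first term is controlled by \eqref{eq:error_1} applied with $\Pi_j\mu$ in place of $\mu$, using $\|\Pi_j\mu\|_P\le\|\mu\|_P$. For the second term, boundedness of $\gamma$ forces $\kappa$ to be bounded, so $\|(hf)(t)\|_{P^{*}}=\|\kappa(\cdot,t,f)\|_{L^2(\Delta)}\le\|\kappa\|_{L^\infty}|\Delta|^{1/2}$ is finite uniformly in $t,f$, and Cauchy--Schwarz yields $|(hf)(t)\circ(\Pi_j\mu-\mu)|\le\|\kappa\|_{L^\infty}|\Delta|^{1/2}\,\|\mu-\Pi_j\mu\|_P$. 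The remaining projection estimate $\|\mu-\Pi_j\mu\|_P\le \tilde C\,2^{-(\alpha+1)j}\|\mu\|_{\mathcal{A}^{\alpha+1}_2}$ follows directly from the definition of the $\mathcal{A}^{\alpha+1}_2$ norm: telescoping $\mu-\Pi_j\mu=\sum_{i>j}(\Pi_i-\Pi_{i-1})\mu$ and applying Cauchy--Schwarz to $\sum_{i>j}2^{-(\alpha+1)i}\,\big(2^{(\alpha+1)i}\|(\Pi_i-\Pi_{i-1})\mu\|_P\big)$ produces the geometric factor $2^{-(\alpha+1)j}$. Combining the two terms gives \eqref{eq:error_2}, so that once the uniform Hölder lemma is in hand the second estimate is essentially bookkeeping, and the genuine difficulty is concentrated entirely in the regularity of the play operator with respect to its threshold parameters.
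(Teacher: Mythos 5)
Your overall architecture coincides with the paper's: a H\"older-in-$s$ estimate for $\kappa(\cdot,t,f)$ uniform in $t$ and $f$ (the paper obtains it by citing Proposition 2.5 of \cite{visintin1994}; your induction through the play recursion using the $1$-Lipschitz property of $\max$ and $\min$ is a legitimate, self-contained substitute that survives the density extension), followed by cellwise Cauchy--Schwarz, and for \eqref{eq:error_2} a triangle-inequality split plus the tail estimate $\|\mu-\Pi_j\mu\|_P\lesssim 2^{-(\alpha+1)j}\|\mu\|_{\mathcal{A}^{\alpha+1}_2}$, which you derive exactly as the paper does. Your split for \eqref{eq:error_2} is the mirror image of the paper's (you pair the projection error with the exact operator $h$, the paper pairs it with $h_j$); both versions need only the uniform boundedness of the kernel and are interchangeable.

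The genuine gap is the one you flag yourself and do not close: assembling the per-cell bounds $|\iint_{\Delta_{j,\ell}}e_j\mu|\lesssim 2^{-(\alpha+1)j}\|\mu\|_{L^2(\Delta_{j,\ell})}$ over the $4^j$ cells. Summing them by Cauchy--Schwarz in $\ell$ costs a factor $2^{j}$ and returns $2^{-\alpha j}\|\mu\|_P$; summing the $\ell^1$ bound $\|e_j\|_{L^\infty(\Delta)}\|\mu\|_{L^1(\Delta)}$ gives the same. In fact $\sup_{\|\mu\|_P\le 1}|\iint_\Delta e_j\mu\,ds|=\|e_j(\cdot,t,f)\|_{L^2(\Delta)}$, and H\"older regularity of the kernel alone yields only $\|e_j\|_{L^2(\Delta)}\lesssim 2^{-\alpha j}$ (generically sharp), so no appeal to ``averaging in the one-point quadrature'' can recover the extra $2^{-j}$ for arbitrary $\mu\in P$. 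Be aware that the paper's own proof passes this point only by replacing $\iint_{\Delta_{j,k}}\|\xi_{j,k}-s\|^{\alpha}|\mu(s)|\,ds$ with $m(\Delta_{j,k})\left(\mathrm{diam}\,\Delta_{j,k}\right)^{\alpha}\iint_{\Delta_{j,k}}|\mu(s)|\,ds$, i.e.\ by multiplying by the cell area $m(\Delta_{j,k})\sim 2^{-2j}<1$, which is not a valid upper bound; that spurious factor is precisely the source of the extra decay in the paper's final line. So your instinct was correct: either the exponent in \eqref{eq:error_1} must be weakened to $\alpha$ for general $\mu\in P$, or additional decay of $\mu$ across scales must be invoked (as in the hypothesis $\mu\in\mathcal{A}^{\alpha+1}_2$ of \eqref{eq:error_2}, where the gain genuinely comes from $\|\mu-\Pi_j\mu\|_P$; note also that $(h_jf)(t)\circ\Pi_j\mu=(h_jf)(t)\circ\mu$ since the approximate kernel lies in $V_j$, so \eqref{eq:error_2} inherits whatever rate \eqref{eq:error_1} actually has). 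As written, neither your argument nor the paper's establishes \eqref{eq:error_1} at the stated rate.
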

\begin{proof}
We first prove the inequality in Equation \ref{eq:error_1}.  By definition of the operator $h$, we can write
\begin{align*}
\left |  (h_j f)(t) \circ \mu - (hf)(t) \circ \mu  \right | & \leq
\iint_\Delta \left | \left( 
\sum_{k\in \Lambda_j} 1_{\Delta_{j,k}}(s) \kappa(\xi_{j,k},t,f) 
-\kappa(s,t,f)\right)\mu(s)  \right | ds  \\
& \leq 
\iint_\Delta \left |  
\sum_{k\in \Lambda_j} 1_{\Delta_{j,k}}(s) \left (  \kappa(\xi_{j,k},t,f) 
-\kappa(s,t,f)  \right ) \right | \left |\mu(s)  \right | ds 
\end{align*}
Since the ridge function $\gamma$ is a bounded function in $C^{\alpha}(\mathbb{R})$, the output mapping $s \mapsto \kappa(s,t,f)$ is also a bounded function in $C^{\alpha}(\Delta)$ where the Lipschitz constant is independent of  $t\in [0,T]$ and $f\in C[0,T]$. Using Proposition 2.5  of \cite{visintin1994}, we have 
\begin{align*}
\left |  (h_j f)(t) \circ \mu - (hf)(t) \circ \mu  \right | & \leq
\iint_\Delta \left |  
\sum_{k\in \Lambda_j} 1_{\Delta_{j,k}}(s)  L \| \xi_{j,k} - s \|^\alpha   \right | \left |\mu(s)  \right | ds  \\
& \leq L \sum_{k\in \Lambda_j} \left ( 
m(\Delta_{j,k}) \left ( \frac{\sqrt{2} (\overline{s}-\underline{s})}{2^j}\right )^\alpha 
\right ) \iint_{\Delta_{j,k}} |\mu(s)|ds \\
& \leq L \sum_{k\in \Lambda_j} \left ( 
m(\Delta_{j,k}) \left ( \frac{\sqrt{2} (\overline{s}-\underline{s})}{2^j}\right )^\alpha 
\right ) m^{1/2}(\Delta_{j,k}) \| \mu \|_P \\
&\leq L 2^{2j} \left ( \frac{1}{2} \frac{(\overline{s}-\underline{s})^2}{2^{2j}} 
\left ( \frac{\sqrt{2}(\overline{s}-\underline{s})}{ 2^j} \right )^\alpha 
\right ) \left(\frac{1}{2} \frac{(\overline{s}-\underline{s})^2}{2^{2j}}\right)^{1/2} \| \mu \|_P = C 2^{-(\alpha+1) j} \| \mu \|_P
\end{align*}
Since we have 
\begin{equation*}
\left |  (h_j f)(t) \circ \Pi_j \mu - (hf)(t) \circ \mu  \right |  \leq
\left |  (h_j f)(t) \circ \Pi_j \mu - (h_jf)(t) \circ \mu  \right | 
+ 
\left |  (h_j f)(t) \circ  \mu - (hf)(t) \circ \mu  \right | , 
\end{equation*}
the second inequality in Equation \ref{eq:error_2} follows from the first Equation \ref{eq:error_1} provided we can show that 
\begin{equation*}
\left |  ( h_jf)(t) \circ \left (\mu - \Pi_j \mu \right )   \right | \leq C 2^{-(\alpha+1) j}
\end{equation*}
for some constant $C$. But it is a standard feature of the approximation spaces that if $\mu \in \mathcal{A}^{\alpha+1}_2$, then  $\|\mu-\Pi_j \mu \|_P \leq  2^{-(\alpha+1) j} \|\mu\|_{\mathcal{A}^{\alpha+1}_{2}}$. To see why this is so, suppose that $\mu\in \mathcal{A}^{\alpha+1}_2$. We have
\begin{align*}
\|\mu-\Pi_j \mu\|^2_P & = \sum_{k=j+1}^\infty \| (\Pi_k -\Pi_{k-1})\mu \|^2_P \\
 & \leq  \sum_{k=j+1}^\infty 2^{-2(\alpha+1) k} 2^{2(\alpha+1) k} 
 \| (\Pi_k -\Pi_{k-1})\mu \|^2_P \\
 &  \leq 2^{-2(\alpha+1) j} \sum_{k=j+1}^\infty 2^{2(\alpha+1) k} \| (\Pi_k -\Pi_{k-1})\mu \|^2_P  \leq 2^{-2(\alpha+1) j} \| \mu \|^2_{\mathcal{A}^{\alpha+1}_{2}}.
\end{align*}
When we apply this to our problem,  the upper bound follows immediately 
\begin{align*}
\left |  ( h_jf)(t) \circ \left (\mu - \Pi_j \mu \right )   \right | &\leq 
\sup_{t\in[0,T]}\left \| (h_jf)(t)\right \|_{P^*} \| \mu - \Pi_j \mu\|_P \\
 & \leq C2^{-(\alpha+1) j},
\end{align*} 
since the boundedness of the ridge function $\gamma$ implies the uniform boundedness  of the history dependent operators $(h_j f)(t)$ over $[0,T]$. 
\end{proof}
Theorem 1 can now be used to establish error bounds for input-output maps that have the form in Equation \ref{eq:hdef2}. 
\begin{theorem}
Suppose that the hypotheses of Theorem 1 hold. Then we have 
$$ \|(\mathcal{H}X)(t)-(\mathcal{H}_j X)(t)\Pi_j \| \lesssim 2^{-(\alpha+1)j}.$$
\end{theorem}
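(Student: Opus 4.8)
The plan is to reduce the matrix-valued estimate to the scalar estimate already furnished by Theorem 1 and then to control the finitely many matrix entries using the smoothness of the coupling map $b$. Recall from Equations \ref{eq:hdef1} and \ref{eq:hdef2} that the operator acts entrywise as
\begin{equation*}
\left[(\mathcal{H}X)(t)\circ\mu\right]_i = \sum_{n=1}^{\ell} b_{in}(X(t))\,[h_n(a(X))](t)\circ\mu_n, \qquad i=1,\ldots,q,
\end{equation*}
and that the grid-level-$j$ approximation $\mathcal{H}_j$ is obtained by replacing each $h_n$ with its approximation $h_{n,j}$ while leaving the known factor $b(X(t))$ exact, and that $\Pi_j$ replaces each argument $\mu_n$ by $\Pi_j\mu_n$. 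Here the symbol $\|\cdot\|$ denotes the operator norm of the induced linear map $P\to\mathbb{R}^q$, so that bounding $\|(\mathcal{H}X)(t)-(\mathcal{H}_jX)(t)\Pi_j\|$ amounts to bounding $|[(\mathcal{H}X)(t)\circ\mu]_i - [(\mathcal{H}_jX)(t)\circ\Pi_j\mu]_i|$ for each $i$, uniformly over $\mu$ in the unit ball.

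First I would fix an index $i$ and subtract the two entrywise expressions. Because the same factor $b_{in}(X(t))$ multiplies both the exact and the approximate contributions in component $n$, the difference collapses to
\begin{equation*}
\left[(\mathcal{H}X)(t)\circ\mu - (\mathcal{H}_jX)(t)\circ\Pi_j\mu\right]_i = \sum_{n=1}^{\ell} b_{in}(X(t))\left([h_n(a(X))](t)\circ\mu_n - [h_{n,j}(a(X))](t)\circ\Pi_j\mu_n\right).
\end{equation*}
Each parenthesized term is precisely of the form controlled by Equation \ref{eq:error_2} of Theorem 1, applied with the continuous input function $f=a(X)$ (continuous since $a$ is smooth and $X\in C([0,T])$) and with $\mu_n\in\mathcal{A}^{\alpha+1}_2$. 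Thus each parenthesized term is bounded by $\tilde{C}_n\,2^{-(\alpha+1)j}$ uniformly in $t\in[0,T]$.

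Next I would absorb the coefficients $b_{in}(X(t))$. Since $X$ is continuous on the compact interval $[0,T]$, its image lies in a compact subset of $\mathbb{R}^m$, and since $b$ is smooth each entry $b_{in}$ is bounded on that set; denote a uniform bound by $\bar{b}$. Applying the triangle inequality across the finite sum over $n=1,\ldots,\ell$ gives $|[(\mathcal{H}X)(t)\circ\mu - (\mathcal{H}_jX)(t)\circ\Pi_j\mu]_i| \le \bar{b}\sum_{n=1}^{\ell}\tilde{C}_n\,2^{-(\alpha+1)j}$, and taking the maximum (or a Euclidean combination) over the finitely many components $i=1,\ldots,q$ yields a single constant $C$ with $\|(\mathcal{H}X)(t)-(\mathcal{H}_jX)(t)\Pi_j\| \le C\,2^{-(\alpha+1)j}$, which is the claimed estimate once the $\mu$-dependent factors are absorbed into $\lesssim$.

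The genuinely substantive work has already been carried out in Theorem 1, so the remaining points require only care rather than new ideas: keeping the component index $n$ distinct from the grid level $j$, verifying that the shared factor $b_{in}(X(t))$ really does cancel so that no cross terms survive, and confirming that the boundedness of $b$ along the trajectory is legitimate. The main obstacle I anticipate is therefore not analytical depth but making precise the meaning of the operator norm $\|\cdot\|$ and ensuring the estimate is genuinely uniform in $t$ and in $\mu$ over the relevant unit ball; all of the decay rate is inherited verbatim from Theorem 1.
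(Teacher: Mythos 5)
Your proposal is correct and follows essentially the same route as the paper: factor out the known coupling matrix $b(X(t))$, bound it uniformly on the compact interval $[0,T]$, and reduce the remaining diagonal operator difference to the scalar estimate of Theorem 1 applied with input $f=a(X)$. The paper phrases this with induced matrix/operator norms rather than entrywise, but the decomposition, the key lemma, and the constants absorbed are the same.
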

Where $\mathcal{H}$ is defined in Equations \ref{eq:2.1},\ref{eq:hdef2}, and $\mathcal{H}_j$ is defined in Equations \ref{eq:Hj1}, \ref{eq:Hj2} and \ref{eq:hj3} below.
\begin{proof}
Recall that for $i = 1 \dots q$ we had $$y_i(t) = \sum_{\substack{\ell = 1 \dots l}} b_{i\ell}(X(t))(h_\ell(a(X))(t)\circ \mu_\ell.$$
In matrix form this equation can be expressed as 
\begin{align*}
\begin{bmatrix}
y_1(t)\\
\vdots \\
y_q (t)
\end{bmatrix}
= 
\underbrace{
\begin{bmatrix}
b_{11}(X(t)) & \hdots &b_{1\ell}(X(t))\\
\vdots & \ddots & \vdots \\
b_{q1}(X(t)) & \hdots & b_{q\ell}(X(t))
\end{bmatrix}
}_{\text{$\mathbb{R}^{q \times l}$}}
\underbrace{
\begin{bmatrix}
h_1(a(X))(t)\circ \mu_1\\
\vdots \\
h_\ell(a(X))(t)\circ \mu_\ell
\end{bmatrix}
}_{\text{$(HX)(t)\circ\mu$}}.
\end{align*}
It follows that,
\begin{align*}
y(t) = \underbrace{(\mathcal{H}X)(t)\circ\mu}_{\text{$\in$ $\mathbb{R}^q$}} = \underbrace{ b(X(t))}_{\in \mathbb{R}^{q\times\ell}} \underbrace{(HX)(t) \circ \mu}_{\mathbb{R}^{\ell}}.
\end{align*}
By assumption $X \in C([0,T],\mathbb{R}^m)$. The construction of $H$ and $\mathcal{H}$ guarantees that 
$
(HX)(t) \in \mathcal{L}(P,\mathbb{R}^l), 
$

$
H : C\left([0,T],\mathbb{R}^m\right) \to C\left([0,T],\mathcal{L}(P,\mathbb{R}^l)\right),
$
and
$
\mathcal{H} : C([0,T],\mathbb{R}^m) \to C([0,T],\mathbb{R}^q).
$
 In this proof we denote by $(\mathbb{R}^l,\|.\|_u)$ the norm vector space that endows $\mathbb{R}^l$ with the $l^m$ norm $\|v\|_u:= \left( \sum^l_{i=1}|v_i|^u\right)^{\frac{1}{u}}$ for $1\leq u \leq \infty$. The normed vector space $(\mathbb{R}^{q\times l},\|.\|_{s,u})$ denotes the induced operator norm on matrices that map $(\mathbb{R}^l,\|.\|_u)$ into $(\mathbb{R}^q,\|.\|_s)$. Now we define an approximation on the mesh level $j$ of $\mathcal{H}$ to be \begin{equation}
 (\mathcal{H}_j X)(t) = b(X(t)((H_j X) (t)\Pi_j
 \label{eq:Hj1},
 \end{equation}
\begin{equation}
(H_j X)(t) = \begin{bmatrix}
h_{1,j}(a(X))(t) & 0 & \cdots & 0  \\
0 & h_{2,j}(a(X))(t) & 0 & \vdots \\
\vdots & \cdots & \ddots & 0 \\
0 & \cdots & 0 & h_{l,j}(a(X))(t)
\end{bmatrix}
\label{eq:Hj2}
\end{equation} and
\begin{equation}
h_{i,j}(t) \circ \nu = \iint_{\Delta} \sum_{k \in \Lambda_j}  1_{\Delta_{k,j}}(s) \kappa(\xi_{j,k},t,f) \nu (s) \mathrm{d}s
\label{eq:hj3}
\end{equation}
for $i = 1\dots \ell$ and $\nu_i\in P_i$.
To simplify the derivation or an error bound for approximation of $\mathcal{H}X(t)\circ \mu$, let $(HX)(t) \circ \mu $ be denoted by $g(t)$. We have assumed that $X \to b(X)$ and $t \to X(t)$ are continuous mappings. There fore $t \mapsto b(X(t))$ is continuous and on a compact set $[0,T]$, and $b(X(\cdot))$ $\in$ $C([0,T],\mathbb{R}^{q\times l})$. We therefore by definition have
\begin{align*}
\|b(X(t))\|_{(\mathbb{R}^{q\times l},\|\cdot\|_{s,u})} & \leq \sup_{\tau \in [0,T]}\|b(X(\tau))\|_{(\mathbb{R}^{q\times l},\|\cdot\|_{s,u})}, \\
& = \|b(X(\cdot))\|_{C([0,T],(\mathbb{R}^{q\times l},\|\cdot\|_{s,u}))}, \\ 
\|b(X(t))g(t)\|_{(\mathbb{R}^q,\|\cdot\|_q)} &\leq \|b(X(\cdot))\|_{C([0,T],(\mathbb{R}^{q\times l},\|\cdot\|_{s,u}))}\|g(t)\|_{(\mathbb{R}^l,\|\cdot\|_u)}. 
\end{align*}
with the norms explicitly denoted in the subscript. For $t\in[0,T]$, and applying these definitions,
{\small
\begin{align*}
\|b(X(t))\left((HX)(t)- (H_j X)(t)\Pi_j\right)\mu\|_{(\mathbb{R}^q,\|\cdot\|_s)} &\leq \|b(X(t))\|_{(\mathbb{R}^{q\times l},\|\cdot\|_{s,u})}\|((HX)(t)-(H_j X)(t)\Pi_j) \circ \mu \|_{(\mathbb{R}^l,\|\cdot\|_u)} \\
&\leq \|b(X(\cdot))\|_{(C([0,T],(\mathbb{R}^{q\times l},\|\cdot\|_{s,u}))}\|((HX)(t)-(H_j X)(t)\Pi_j) \circ \mu \|_{(\mathbb{R}^l,\|\cdot\|_u)}
\end{align*}
}
with
\begin{align*}
\|((&HX)(t)-(H_j X)(t)\Pi_j)  \circ \mu \|_{(\mathbb{R}^l,\|\cdot\|_u)} = \\
& \left \|\begin{bsmallmatrix}
h_{1,j}(a(X))(t) - h_{1,j}(a(X))(t)\Pi_j&&&&0  \\
 & h_{2,j}(a(X))(t) - h_{2,j}(a(X))(t)\Pi_j &&&\\
&&\ddots  &&\\
0&&&& h_{l,j}(a(X))(t)-h_{l,j}(a(X))(t)\Pi_j
\end{bsmallmatrix}
\begin{bsmallmatrix}
\mu_1\\
\mu_2\\
\vdots\\
\mu_l
\end{bsmallmatrix}\right \|_{(\mathbb{R}^l,\|\cdot\|_u)}.
\end{align*}
Therefore we can now write
\begin{align*}
\left\|((HX)(t)-(H_j X)(t)\Pi_j) \circ \mu \right\|_{\left(\mathbb{R}^l ,\|\cdot\|_u\right)} \leq \|((HX)(t)-(H_j X)(t)\Pi_j)\|_{(\mathcal{L}(P,(\mathbb{R}^l,\|\cdot\|_u)))}\|\mu\|_{P}.
\end{align*} Hence, recalling Theorem 1 we can now derive the convergence rate 
\begin{align*}
\|((HX)(t)-(H_j X)(t)\Pi_j)\|_{\left(\mathcal{L}\left(P,\left(\mathbb{R}^l,\|\cdot\|_u\right)\right)\right)} &= \sup_{\|\mu\|<1} \|\left(\left(HX\right)(t)-\left(H_j X\right)(t)\Pi_j\right) \circ \mu \|_{(\mathbb{R}^l,\|\cdot\|_u)} \\
&\leq \sup_{\|\mu\|<1} \left| ((h_{i,j} (a(X)))(t) - (h_{i,j}(a(X)))(t)\Pi_j) \circ \mu
\right| \\
&\leq \hat{C}2^{-(\alpha+1)j}.
\end{align*}
Therefore we obtain the final bound
\begin{align}
\|(\mathcal{H}X)(t)-(\mathcal{H}_j X)(t)\Pi_j)\|_{(\mathbb{R}^l ,\|\cdot\|_u)}\lesssim 2^{-(\alpha+1)j},
\end{align}
for all $t\in [0,T]$.
\end{proof}
% \begin{framed}
% \textbf{Convergence Rate}
% \begin{equation}
% \lambda_j(t) = \|(I-\Pi_j)\hat{\mu}(0)\| + \int_0^t  \|G(s) - G_j(s)\|^2  \left(c \|X\|^2+ \frac{1}{a} \| \hat{\mu}\|^2 \right) \mathrm{d}s,
% \end{equation}
% \begin{align*}
% \|G(s)-G_j(s)\| &= \|B(\mathcal{H}X)(t)-B(\mathcal{H}_j X)(t)\Pi_j\| \\
% &\leq \|B\|_{(\mathbb{R}^{q\times L},\|.\|_{q,L})}\|b(t)\|_{(\mathbb{R}^{q\times L},\|.\|)}\|H_{oper}(t)\|_{(\|.\|_{q,L}(\mathcal{L}))}\\
% \leq C_2 2^{-(\alpha+1)j}\\
% \implies \|G(s)-G_j(s)\|^2 \leq C_2^{2} 2^{-(\alpha+1)2j}
% \end{align*}
% \begin{align*}
% \lambda_j(t) &= \|(I-\Pi_j)\hat{\mu}(0)\|_{\mathbb{R}^L} + \int_0^t 2^{-(\alpha+1)2j} \left(c \|X\|^2+ \frac{1}{a} \| \hat{\mu}\|^2 \right)\mathrm{d}s \\
% &\leq \|(I-\Pi_j)\|_{M_{p,q}(\mathcal{L})} \| \hat{\mu}(0)\|_{P,\ell^q}+ 2^{-(\alpha+1)2j}\left(c \|X\|^2+ \frac{1}{a} \| \hat{\mu}\|^2 \right) t\\
% \end{align*}
% if $t$ $\simeq$ $C_5 2^{(\alpha +1)j}$ then
% $$
% \lambda_j(t) < \mathcal{O}(2^{-(\alpha+1)j}) \quad \text{for} \quad t \in [0,C_5 2^{(\alpha +1)j}]
% $$
% \end{framed}
\section{Well-Posedness: Existence and Uniqueness}
\label{sec:exist}
The history dependent governing equations studied in this paper are a special case of the more general class of abstract Volterra equations or functional differential equations. A general treatise on abstract Volterra equations can be found in \cite{corduneaunu2008}, while various  generalizations of theory for the  existence and uniqueness of functional differential equations have been given in \cite{driver1962}, \cite{rudakov1978}, \cite{irs2002}.  
We have noted in Section \ref{sec:intro} that the general form of the governing equations we consider in this paper have the form 
\begin{equation}
\dot{X}(t)=AX(t)+B((\mathcal{H}X)(t) \circ {\mu}+u(t))
\label{eq:first_order}
\end{equation}
 where the state vector $X(t) \in \mathbb
{R}^{m}$, the control inputs
$u(t) \in \mathbb{R}^{q}$,
$A \in \mathbb{R}^{m{\times}m}=\mathbb{R}^{2n{\times}2n}$ is a Hurwitz matrix, and 
 $B \in \mathbb{R}^{m{\times}q}$  is the control  input matrix.  
We make the following assumptions about the history dependent operators $\mathcal{H}$:
\begin{enumerate}
 \item[H1)] $ \mathcal{H} : C([0,\infty),\mathbb{R}^m ) \mapsto  C([0,\infty),P^*)$
 \item[H2)] $\mathcal{H}$ is causal in the sense that for all 
 $ x,y  \in C([0,\infty);\mathbb{R}^m)$, 
 \[x(\cdot) \equiv {y} (\cdot) \; \text{on} \; [0,{\tau}] \implies (\mathcal{H}x)({t})=(\mathcal{H}y)({t})\: \quad \forall \: {t} \in [0,\tau]. \]
 \item[H3)] Define  the closed set consisting of all  continuous functions $f$ that remain within radius $r$ of the initial condition $X_0$ over the closed interval $[t,t+h]$, 
 $$\overline{\mathcal{B}}_{[t,t+h],r} (X_0):=\begin{Bmatrix} f\in C([0,h),\mathbb{R}^m) \biggl | f(0)=X_0 \text{ and } \|f(s) - X_0\|_{\mathbb{R}^m} \leq r \text{ for } s\in[t,t+h] \end{Bmatrix},$$ 
 for a fixed $X_0\in\mathbb{R}^m$. For each $t\ge 0$, we assume that there exist $h,r,L>0$ such that 
 \begin{equation}
 \| (\mathcal{H}X)(s)-(\mathcal{H}Y)(s) \|_{P^*}  \leq L \| X-Y \|_{[t,t+h]} \quad \quad s\in[t,t+h]
 \label{eq:local_lip}
 \end{equation}
 for all $X,Y \in \overline{\mathcal{B}}_{[t,t+h],r}(X_0)$. 
\end{enumerate}
Our first result guarantees the existence and uniqueness  of a local solution to Equation \ref{eq:1st_order}, and also describes an important case when such local solutions can be extended to $[0,\infty)$. This theorem can be proven via  the existence and uniqueness Theorem 2.3 in \cite{irs2002} for functional delay-differential equations. However, since we are not interested in delay differential equations in this paper, but rather on a highly structured class of integral hysteresis operators, the proof can be much simplified.  
\begin{theorem}
\label{th:exist_general}
Suppose that the history dependent operator $\mathcal{H}$ satisfies the hypotheses (H1),(H2),(H3).  Then there is a $\delta>0$ such that Equation \ref{eq:first_order} has a solution $X\in C([0,\delta), \mathbb{R}^m)$. Suppose the interval $[0,\delta)$ is extended to the maximal interval $[0,\omega)\subset [0,\delta)$ over which such a solution exists.  If the solution is bounded, then $[0,\omega)=[0,\infty)$. 
\end{theorem}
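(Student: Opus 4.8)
The plan is to recast the initial value problem as a fixed point equation and apply the Banach contraction principle for local existence and uniqueness, and then to obtain the global statement by a standard continuation argument. First I would integrate Equation \ref{eq:first_order} against the variation-of-parameters formula to obtain the equivalent integral equation
\[
X(t) = e^{At}X_0 + \int_0^t e^{A(t-s)} B\bigl((\mathcal{H}X)(s)\circ\mu + u(s)\bigr)\,ds,
\]
and define the map $\mathcal{T}$ to send $X$ to the right-hand side above. The causality hypothesis (H2) is precisely what makes $\mathcal{T}$ well defined as a self-map of $C([0,h],\mathbb{R}^m)$: the value $(\mathcal{H}X)(s)$ for $s\le h$ depends only on $X$ restricted to $[0,s]$, so $\mathcal{T}$ never references values of $X$ outside the interval on which it is being constructed.

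Next I would show that, for $h$ sufficiently small, $\mathcal{T}$ maps the closed ball $\overline{\mathcal{B}}_{[0,h],r}(X_0)$ into itself and is a contraction there. The self-mapping property follows by estimating $\|(\mathcal{T}X)(t)-X_0\|$ using $M:=\sup_{0\le s\le h}\|e^{As}\|$, the boundedness of $u$, and the boundedness of $(\mathcal{H}X)(s)\circ\mu$ over the ball (the latter obtained from (H3) by comparing $X$ with the constant trajectory $X_0$), then choosing $h$ small. For the contraction estimate I would use the duality pairing bound $|(\mathcal{H}X)(s)\circ\mu - (\mathcal{H}Y)(s)\circ\mu| \le \|(\mathcal{H}X)(s)-(\mathcal{H}Y)(s)\|_{P^*}\,\|\mu\|_P$ together with the local Lipschitz hypothesis (H3), yielding
\[
\|\mathcal{T}X - \mathcal{T}Y\|_{[0,h]} \le M\,\|B\|\,L\,\|\mu\|_P\, h\,\|X-Y\|_{[0,h]}.
\]
Shrinking $h$ so that the prefactor is strictly less than one gives a contraction, and the Banach fixed point theorem delivers a unique $X\in C([0,h],\mathbb{R}^m)$ solving the integral equation. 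Differentiating recovers the differential equation and uniqueness is inherited from the fixed point, establishing a local solution on some $[0,\delta)$.

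For the second assertion, I would first pass to the maximal interval of existence $[0,\omega)$ by a routine continuation argument: any two solutions agree on their overlap by the uniqueness just proved, so their union is again a solution, and the supremum of all intervals of existence defines $[0,\omega)$. The crux is the claim that a bounded solution must have $\omega=\infty$. Arguing by contradiction, suppose $\omega<\infty$ and $\sup_{t\in[0,\omega)}\|X(t)\|\le R$. I would again use (H3), comparing $X$ with a constant reference trajectory, to conclude that $s\mapsto(\mathcal{H}X)(s)\circ\mu$ is bounded on $[0,\omega)$; the integral equation then shows the integrand is bounded, so the Cauchy difference $\|X(t_2)-X(t_1)\|$ tends to zero as $t_1,t_2\to\omega^-$ and the limit $X(\omega^-)$ exists. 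Extending $X$ continuously to $[0,\omega]$ and reapplying the local existence argument with base time $\omega$ and initial value $X(\omega^-)$—permitted because (H3) supplies constants $h,r,L$ at every $t\ge 0$—produces a solution on $[0,\omega+\delta')$, contradicting maximality. Hence $\omega=\infty$.

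I expect the continuation step to be the main obstacle, specifically the propagation of boundedness of the state to boundedness of the history-dependent forcing term $(\mathcal{H}X)(\cdot)\circ\mu$ on all of $[0,\omega)$. The hypotheses (H1)--(H3) furnish only local-in-time Lipschitz control, with constants $h,r,L$ that may vary with the base time, so some care is required to chain these local estimates up to the terminal time $\omega$ and secure the uniform bound that forces the limit $X(\omega^-)$ to exist. Once that bound is in hand, the restart and the contradiction with maximality are routine, and the highly structured integral form of $\mathcal{H}$ (as opposed to a general delay operator) is what permits the simplification over the more general argument of \cite{irs2002}.
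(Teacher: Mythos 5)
Your proposal is correct and follows essentially the same route as the paper: both recast Equation \ref{eq:first_order} as a fixed point problem for an integral operator on $C([0,h),\mathbb{R}^m)$, verify that the operator maps the closed ball $\overline{\mathcal{B}}_{[0,\delta],r}(X_0)$ into itself for $\delta$ small, and invoke the contraction mapping theorem using the local Lipschitz bound (H3) paired against $\|\mu\|_P$. The only cosmetic difference is that you integrate against $e^{At}$ (variation of parameters) while the paper keeps the term $AX$ under the integral sign, which is why its contraction constant carries an extra $\|A\|$ contribution. One substantive remark: the paper's written proof stops after establishing the unique local solution on $[0,\delta]$ and never addresses the second assertion that a bounded solution extends to $[0,\infty)$; your continuation argument---a uniform bound on $(\mathcal{H}X)(\cdot)\circ\mu$, existence of the left limit at $\omega$, and a restart of the local argument at $\omega$---supplies exactly that missing piece, and your closing caveat about chaining the time-local constants $h,r,L$ of (H3) up to $\omega$ correctly identifies the one step that needs care (a finite covering of the compact interval $[0,\omega]$ by the local intervals handles it).
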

\begin{corollary}
\label{th:exist_specific}
Suppose that the history dependent operator $\mathcal{H}$ in Equation \ref{eq:first_order} is defined as in Equation \ref{eq:hdef1} and \ref{eq:hdef2} in terms of a globally Lipschitz, bounded continuous ridge function $\gamma: \mathbb{R}\rightarrow \mathbb{R}$ in Equation \ref{eq:defgamma}. Then Equation \ref{eq:first_order} has a unique solution $X \in C([0,\infty),\mathbb{R}^m)$ for each $\mu\in P$.
\end{corollary}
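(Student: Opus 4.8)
The plan is to obtain this corollary as a direct application of Theorem \ref{th:exist_general}: first verify that the specific play-type operator built from $\gamma$ satisfies the three hypotheses (H1), (H2), (H3), which yields a unique local solution on a maximal interval $[0,\omega)$; then exploit the \emph{boundedness} of $\gamma$ (and not merely its Lipschitz continuity) to produce an a priori bound on $X$ that rules out finite-time blowup, forcing $\omega=\infty$.

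For the hypotheses, (H1) follows from the continuity of the elementary kernels $\kappa_i$ as mappings into $C[0,T)$ together with the smoothness of the maps $a$ and $b$ appearing in (\ref{eq:hdef1})--(\ref{eq:hdef2}); composing these shows that $\mathcal{H}$ carries $C([0,\infty),\mathbb{R}^m)$ into $C([0,\infty),P^*)$. Hypothesis (H2), causality, is immediate from the recursive definition of the play kernel $t\mapsto\kappa(s,t,f)$, which on each subinterval is determined solely by the restriction of $f$ to $[0,t]$; hence two inputs agreeing on $[0,\tau]$ produce identical outputs on $[0,\tau]$. For (H3), the global Lipschitz property of $\gamma$ makes the generalized play operator $f\mapsto\kappa(s,\cdot,f)$ Lipschitz in the sup-norm uniformly in $s\in\Delta$ (the standard non-expansiveness of play operators, see \cite{visintin1994}); integrating against $\mu$ as in (\ref{eq:hys_int_op}) and composing with the locally Lipschitz (because smooth) maps $a,b$ on the ball $\overline{\mathcal{B}}_{[t,t+h],r}(X_0)$ produces the estimate (\ref{eq:local_lip}). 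In fact the \emph{global} Lipschitz hypothesis gives a Lipschitz constant independent of the ball, which is stronger than (H3) requires and which also secures uniqueness across the whole maximal interval through the standard continuation argument.

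The substantive step is the a priori bound. Writing the mild form of (\ref{eq:first_order}) with $A$ Hurwitz gives $X(t)=e^{At}X_0+\int_0^t e^{A(t-s)}B\big[(\mathcal{H}X)(s)\circ\mu+u(s)\big]\,ds$, and one uses $\|e^{At}\|\le M e^{-\beta t}$. Boundedness of $\gamma$ yields a uniform kernel bound $|\kappa(s,t,f)|\le C_\gamma$ depending only on $\sup|\gamma|$, so the hysteresis factor obeys $|(HX)(t)\circ\mu|\le C_\gamma\, m(\Delta)^{1/2}\|\mu\|_P$, a bound independent of $t$ and of the trajectory. The trajectory enters the forcing only through the prefactor $b(X(t))$; controlling its growth (at most linear growth, or boundedness arising from the robotic structure $b=M^{-1}$) together with the uniformly bounded hysteresis factor reduces the integral inequality to one to which Gronwall's lemma applies, giving a bound on $\|X(t)\|$ on every finite interval. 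Consequently the solution cannot escape to infinity in finite time, so it remains bounded on $[0,\omega)$ whenever $\omega<\infty$; Theorem \ref{th:exist_general} then forces $\omega=\infty$, and local uniqueness from (H3) propagates to a unique solution on $[0,\infty)$ for each $\mu\in P$.

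The main obstacle I anticipate is precisely this last a priori estimate: boundedness of $\gamma$ tames the history-dependent factor $(HX)(t)\circ\mu$ completely, but the nonlinear prefactor $b(X(t))$ can in principle reintroduce superlinear growth into the forcing. The argument closes cleanly only when $b$ (equivalently the inertia inverse $M^{-1}$) is bounded or of at most linear growth along trajectories, so the care in the proof will lie in justifying the Gronwall step rather than in verifying the three hypotheses.
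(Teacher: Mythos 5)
Your route differs from the paper's. The paper's proof of this corollary does not pass through Theorem \ref{th:exist_general} plus an a priori bound at all: it simply re-runs the local contraction-mapping argument explicitly for this class of operators. It writes the integral (mild) form $X=TX$, uses the local Lipschitz estimate (\ref{eq:local_lip}) --- which the Lipschitz ridge function supplies --- to show that for $\delta$ small enough $T$ maps the closed ball $\overline{\mathcal{B}}_{[0,\delta],r}(X_0)$ into itself and is a contraction there, and concludes with a unique solution on $[0,\delta]$. In particular the paper's own proof stops at local existence and uniqueness and never actually establishes the extension to $[0,\infty)$ claimed in the corollary statement; your proposal is the more honest attempt at the full claim, since you explicitly separate the local step (via the hypotheses (H1)--(H3) and Theorem \ref{th:exist_general}) from the continuation step (an a priori bound ruling out finite-time blowup). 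Your verification of (H1) and (H2) is fine, and your observation that boundedness of $\gamma$ gives a trajectory-independent bound $|(HX)(t)\circ\mu|\leq C_\gamma\, m(\Delta)^{1/2}\|\mu\|_P$ is exactly the right ingredient for the global step.

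Two points need repair, both of which you partially flag yourself. First, your claim that the global Lipschitz property of $\gamma$ yields a Lipschitz constant for $\mathcal{H}$ ``independent of the ball'' is too strong: the composite operator in (\ref{eq:hdef1})--(\ref{eq:hdef2}) is $X\mapsto b(X)\,H(a(X))$, and $a$ and $b$ are only assumed smooth, hence only locally Lipschitz; so (H3) holds with a ball-dependent constant, which is all the local theory needs but does not by itself give uniqueness ``across the whole maximal interval'' without the usual patching argument. Second, and more seriously, the Gronwall step does not close under the stated hypotheses. The forcing in (\ref{eq:first_order}) is $B\bigl(b(X(t))(HX)(t)\circ\mu+u(t)\bigr)$, and while the hysteresis factor is uniformly bounded, the prefactor $b(X(t))$ is an arbitrary smooth matrix-valued function of the state; without an added assumption that $b$ is bounded or of linear growth (true in the robotic setting where $b$ involves $M^{-1}(q)$ with $M$ uniformly positive definite, but not part of the corollary's hypotheses), the right-hand side can grow superlinearly in $\|X\|$ and finite-time blowup cannot be excluded by Gronwall. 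So as written your proof establishes local existence and uniqueness but leaves the same global gap the paper leaves; to close it one must either add a growth condition on $b$ or restrict the claim to a maximal interval as in Theorem \ref{th:exist_general}.
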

\begin{proof}
For completeness, we outline a simplified version the proof of Theorem \ref{th:exist_general} for our class of history and parameter dependent equations.  As a point of comparison, the reader is urged to compare the proof below to the conventional proof for systems of nonlinear ordinary differential equations, such as in \cite{khalil}. If we integrate the equations of motion in time, we can define an operator $T:C([0,h),\mathbb{R}^m) \rightarrow C([0,h), \mathbb{R}^m)$ from 
\begin{align*}
 X(t) &= X_{0} + \int_0^t AX(\tau) + B((\mathcal{H}X)(\tau) \circ \mu + u(\tau) ) \mathrm{d} \tau, \\
 X(t) &= (TX)(t), 
\end{align*}
for all $t\in[0,h]$. 
As introduced in hypothesis (H3), we select  $h,r>0$ and define 
\[ 
\overline{\mathcal{B}}_{[0,h],r}(X_0) := \Big\{ X \in C([0,h),\mathbb{R}^m) \biggl | X(0) = X_0 , \| X_0 - X \|_{[0,\delta]} \leq r  \Big\} .
\]
such that the local Lipschitz condition in Equation \ref{eq:local_lip} holds.  
Now we consider restricting the equation to a subinterval $[0,\delta]\subseteq [0,h]$, and investigate conditions on $T$ that enable the application of the contraction mapping theorem.  We first study  what conditions on $\delta>0$ are sufficient to guarantee that $T: \overline{\mathcal{B}}_{[0,\delta],r}(X_0) \rightarrow 
\overline{\mathcal{B}}_{[0,\delta],r}(X_0)$.  We have
{\small
\begin{align*}
\| TX (t) - X_0 \|_{\mathbb{R}^m} & \leq \int_0^t \| AX(s) + B((\mathcal{H}X(s) \circ \mu + u(s)) \|_{\mathbb{R}^m} \mathrm{d}s\\
 & \leq \int_0^t  \biggl ( \| A \| \|X(s) - X_0 \|_{\mathbb{R}^m} +  \underbrace{\|AX_0 \|_{\mathbb{R}^m}}_{\leq\|A\| \|X_0\| = \text{$M_A$}} \\ 
 & \qquad \qquad + \| B \| \underbrace{(\| (\mathcal{H}X)(s) -(\mathcal{H}X_0)(s) \|_{P^*}}_{\text{$ \leq L \|X-X_0\|_{[0,\delta]} $}} \underbrace{\| \mu \|_P}_{\text{$M_\mu$}} +  \underbrace{\| (\mathcal{H}X_0)(s) \|_{P^*} }_{\text{$\leq M_H= \|\mathcal{H}X_0\|_{C([0,\delta],P^*)}$ }}+ \underbrace{\| u \|_{C([0,h),\mathbb{R}^p)}}_{\text{$\leq M_u= \|u\|_{C([0,\delta],\mathbb{R}^q)}$}})  \biggr) \mathrm{d}s\\
 &\leq ((\|A\| + \| B \|M_\mu  L)r + M_A + \|B\|M_T)t\\
 & \leq ((\|A\| + \| B \|M_\mu  L)r + M_A + \|B\|M_T)\delta
\end{align*}
}
where $M_T=M_H+M_u$. 
Now we restrict $\delta$   so that 
 \[ 
 \Big((\|A\| + \| B \|M_\mu L)r + M_A + \|B\|M_T \Big)\delta \leq r, \]
 which implies
 \[ 
  \delta < \frac{r}{(\|A\| + \| B \|M_\mu L)r + M_A + \|B\|M_T)} .
 \]
 We thereby conclude that 
 \[
 \| TX (t) - X_0 \|_{C([0,h),\mathbb{R}^p)} \leq r \: \quad \text{for}  \quad \: t \in [0,\delta], \]
and it follows that $T : \overline{\mathcal{B}}_{[0,\delta],r} \rightarrow  \overline{\mathcal{B}}_{[0,\delta],r} $. 
 Next we study conditions on $\delta$ that guarantee that 
  $T : \overline{\mathcal{B}}_{[0,\delta],r} \rightarrow  \overline{\mathcal{B}}_{[0,\delta],r} $ is a contraction.  We  compute directly a bound on the difference of the output  as 
 \begin{align*}
  \| (TX) (t) - (TY)(t) \|_{\mathbb{R}^m} &\leq \int_0^t \| AX(s)-AY(s) + B((\mathcal{H}X)(s) - (\mathcal{H}Y(s))\circ \mu ) \|_{\mathbb{R}^m} \mathrm{d}s \\
  &\leq (\| A \|+\| B \| M_\mu L_\mu )\|X-Y\|_{\mathbb{R}^m}\delta . 
 \end{align*}
If we choose 
\[ \delta < min \biggl \{ h, \frac{r}{(\|A\| + \| B \|M_\mu L)r + M_A + \|B\|M_T)},\frac{1}{ \| A \|+\| B \| M_\mu L } \biggr \}, 
\]
it is apparent that $T$ is a contraction that maps  the closed set $\overline{\mathcal{B}}_{[0,\delta],r}$ into itself.   
There is a unique solution in $\overline{\mathcal{B}}_{[0,\delta],r}$ on $[0,\delta]$. 
\end{proof}
\section{Online Identification}
\label{sec:ident}
A substantial literature has emerged that treats online estimation problems for linear or nonlinear plants governed by systems of ordinary differential equations.  Approaches for these finite dimensional systems that are based on variants of  Lyapunov's direct method can be found in any of a number of good texts including, for instance, \cite{na2005}, \cite{sb2012}, or \cite{is2012}. The general strategies that have proven fruitful for such finite dimensional systems have often been extended to classes of systems whose dynamics evolve in an infinite dimensional space: distributed parameter systems.  A discussion of the general considerations for identification of distributed parameter systems can be found in \cite{bk1989}, for example, while  studies that are specifically relevant to this paper  include \cite{d1993}, \cite{dr1994}, \cite{dr1994pe}, and \cite{bsdr1997}.

In this section we adapt the  framework introduced in  \cite{bsdr1997} to our class of history dependent, functional differential equations.  The approach in \cite{bsdr1997} assumes that the state equations for the distributed parameter system have first order form, and they are cast in terms of a nonlinear, parametrically dependent bilinear form that is coercive.  The resulting equations that govern the error in state  and in distributed parameter estimates is a nonlinear function of the state trajectory of the plant. In contrast, a similar strategy in this paper yields error equations that depend nonlinearly on the history of the state trajectory.

The general online estimation problem discussed in this section assumes that we observe the value of the state $X(t)\in \mathbb{R}^m$ at each time $t\ge 0$ that depends on some unknown distributed parameter $\mu \in P$, and subsequently use the observed state to construct  estimates $\hat{X}$  of the states and $\hat{\mu}$ of the distributed parameters. We construct online estimates that evolve on the state space $\mathbb{R}^m \times P$ according to the time varying, distributed parameter system equations 
\begin{align}
 \dot{\hat{X}}(t) & = A \hat{X}(t) + B\left ( \left (  \mathcal{H} X \right )(t) \circ \hat{\mu}(t) + u(t)  \right ), \notag \\
 \dot{\hat{\mu}}(t) & = -\left (B (\mathcal{H}X)(t) \right)^*\hat{X}(t),
 \label{eq:infdim}
\end{align}
for $t\ge 0$ where the initial conditions are $\hat{X}_0:=X_0$, $\hat{\mu}(0):=\mu_0$. In these equations, we denote the adjoint operator $L^*$ for any bounded linear operator $L$. These equations can be understood as incorporating a natural choice of a parameter update law.  The learning law above can be interpreted as generalization of the conventional gradient update law that features prominently in  approaches for finite dimensional systems \cite{is2012} and that has been extended to distributed parameter systems in \cite{bsdr1997}. It is immediate that the 
error in estimation of the states $\tilde{X}:=X-\hat{X}$ and in the distributed parameters $\tilde{\mu}:=\mu-\hat{\mu}$ satisfy the homogeneous system of equations 
\begin{align*}
\begin{Bmatrix} \dot{\tilde{X}}(t) \\ \dot{\tilde{\mu}}(t) \end{Bmatrix}
=
\begin{bmatrix} A & B(\mathcal{H}X)(t) \\ -\left ( B(\mathcal{H}X)(t)\right)^*& 0  \end{bmatrix}
\begin{Bmatrix} {\tilde{X}}(t) \\ {\tilde{\mu}}(t)\end{Bmatrix}.
\end{align*}
\subsection{Approximation of the Estimation Equations}
\label{subsec:approx_est}
The governing system in Equations \ref{eq:infdim} constitute a distributed parameter system since the functions $\hat{\mu}(t)$ evolve in the infinite dimensional space $P$.  In practice these equations must be approximated by some finite dimensional system. We define $\tilde{X}_j=\hat{X}-\hat{X}_j$ and $\tilde{\mu}_j=\hat{\mu}-\hat{\mu}_j $ where $\tilde{X}_j$ and $\tilde{\mu}_j$ express approximation errors due to projection of solutions in $\mathbb{R}^m \times P $ to a finite dimensional approximation space. We construct a finite dimensional approximation of the the online estimation equations using the results of Section \ref{subsec:approx_hist} and obtain
\begin{align}
\dot{\hat{X}}_j(t)  & = A\hat{X}_j(t) + B \left (
(\mathcal{H}_j X)(t) \Pi_j \circ \hat{\mu}_j(t) + u(t) 
\right ), \\
\dot{\hat{\mu}}_j(t) & = - \left ( B(\mathcal{H}_{j} X)(t) \Pi_j  \right)^* X(t).
\label{eq:approx_on_est}
\end{align}
\begin{theorem}
Suppose that the history dependent operator $\mathcal{H}$ in Equation \ref{eq:first_order} is defined as in equation \ref{eq:hdef1} and \ref{eq:hdef2} in terms of a globally Lipschitz, bounded continuous ridge function $\gamma: \mathbb{R}\rightarrow \mathbb{R}$ in Equation \ref{eq:defgamma}. Then for any $T>0$, we have
\begin{align*}
\| \hat{X} - \hat{X}_j\|_{C([0,T],\mathbb{R}^m)} & \rightarrow 0, \\
\|\hat{\mu} - \hat{\mu}_j\|_{C([0,T],P)} & \rightarrow 0,
\end{align*}
as $j\rightarrow \infty$.
\end{theorem}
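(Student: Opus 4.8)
The plan is to introduce the approximation errors $\tilde{X}_j := \hat{X} - \hat{X}_j$ and $\tilde{\mu}_j := \hat{\mu} - \hat{\mu}_j$ and to derive the coupled, inhomogeneous linear system of differential equations that they satisfy by subtracting the finite dimensional estimator \ref{eq:approx_on_est} from the infinite dimensional estimator \ref{eq:infdim}. For the state error this gives
\[
\dot{\tilde{X}}_j(t) = A\tilde{X}_j(t) + B\Big( (\mathcal{H}X)(t)\circ \hat{\mu}(t) - (\mathcal{H}_j X)(t)\Pi_j \circ \hat{\mu}_j(t) \Big).
\]
The essential manoeuvre is to add and subtract the intermediate term $(\mathcal{H}X)(t)\circ \hat{\mu}_j(t)$, which splits the driving term into a piece proportional to the parameter error and a piece controlled entirely by the operator approximation error:
\begin{align*}
(\mathcal{H}X)(t)\circ \hat{\mu}(t) - (\mathcal{H}_j X)(t)\Pi_j \circ \hat{\mu}_j(t)
&= (\mathcal{H}X)(t)\circ \tilde{\mu}_j(t) \\
&\quad + \big[(\mathcal{H}X)(t) - (\mathcal{H}_j X)(t)\Pi_j\big]\circ \hat{\mu}_j(t).
\end{align*}
Performing the analogous subtraction in the parameter update law (with the two estimators driven by the same observed state, so that no term proportional to the non-vanishing learning error $X-\hat{X}$ survives) produces a right hand side for $\dot{\tilde{\mu}}_j$ built from the adjoint $\big[B((\mathcal{H}X)(t) - (\mathcal{H}_j X)(t)\Pi_j)\big]^{*}$ together with a term proportional to the state error $\tilde{X}_j$. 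First I would organize both equations so that every term falls into one of two classes: terms proportional to the current errors $\tilde{X}_j(t),\tilde{\mu}_j(t)$, and inhomogeneous forcing terms containing the operator difference $(\mathcal{H}X)(t) - (\mathcal{H}_j X)(t)\Pi_j$.

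Next I would bound the two classes separately. For the forcing terms, Theorem 2 gives $\|(\mathcal{H}X)(t) - (\mathcal{H}_j X)(t)\Pi_j\| \lesssim 2^{-(\alpha+1)j}$ uniformly in $t\in[0,T]$, so each such term is dominated by $2^{-(\alpha+1)j}$ multiplied by either $\|\hat{\mu}_j(t)\|_P$ or $\|X(t)\|$. For the error-proportional terms, the coefficients are $\|A\|$, $\|B\|$, and the operator norm $\|(\mathcal{H}X)(t)\|_{P^{*}}$; the uniform boundedness of the latter over $[0,T]$ is exactly the boundedness of the history dependent operators that follows from the assumed boundedness of $\gamma$, already exploited in the proof of Theorem 1. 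The one ingredient that must be supplied before the estimates close is an a priori bound on $\|\hat{\mu}_j(t)\|_P$ (and $\|\hat{X}_j(t)\|$) that is \emph{uniform in $j$}: integrating the $\hat{\mu}_j$ equation and using $\|\Pi_j\|\le 1$ together with the uniform operator bound yields $\|\hat{\mu}_j(t)\|_P \le \|\hat{\mu}_j(0)\|_P + CT$ independent of $j$, after which a Gr\"onwall estimate on the $\hat{X}_j$ equation bounds $\|\hat{X}_j(t)\|$ uniformly on $[0,T]$.

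With the coefficients and forcing controlled, I would assemble a scalar differential inequality for $E_j(t) := \|\tilde{X}_j(t)\|_{\mathbb{R}^m} + \|\tilde{\mu}_j(t)\|_P$ of the form
\[
E_j(t) \le E_j(0) + \int_0^t \big( c_1 E_j(s) + c_2\, 2^{-(\alpha+1)j}\big)\, ds,
\]
where $c_1,c_2$ are independent of $j$. The initial data contribute $\|\tilde{X}_j(0)\|_{\mathbb{R}^m}=0$, since both estimators start at $X_0$, and $\|\tilde{\mu}_j(0)\|_P = \|\mu_0 - \Pi_j\mu_0\|_P \to 0$ under the natural initialization $\hat{\mu}_j(0)=\Pi_j\hat{\mu}(0)$ by density of the multiresolution spaces in $P$ (with rate $2^{-(\alpha+1)j}$ if in addition $\mu_0\in\mathcal{A}^{\alpha+1}_2$). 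Gr\"onwall's inequality then gives $E_j(t) \le e^{c_1 T}\big(E_j(0) + c_2 T\, 2^{-(\alpha+1)j}\big)$ for all $t\in[0,T]$, and taking the supremum over $[0,T]$ delivers the claimed convergence of both $\|\hat{X}-\hat{X}_j\|_{C([0,T],\mathbb{R}^m)}$ and $\|\hat{\mu}-\hat{\mu}_j\|_{C([0,T],P)}$ to zero as $j\to\infty$.

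The hard part will be keeping the Gr\"onwall constant $c_1$ and the forcing constant $c_2$ uniform in $j$, which is precisely what makes the bound vanish rather than merely remain finite; this rests on the uniform-in-$j$ a priori bounds for $\hat{X}_j,\hat{\mu}_j$ and on $\sup_{t\in[0,T]}\|(\mathcal{H}_j X)(t)\Pi_j\|<\infty$, both of which trace back to the boundedness of $\gamma$ and to $\|\Pi_j\|\le 1$. A secondary but genuine technical point is the bookkeeping of the add-and-subtract splitting in the coupled system, so that no surviving term is simultaneously proportional to an error \emph{and} unbounded in $j$; once the splitting has isolated the operator difference, Theorem 2 does the remaining work.
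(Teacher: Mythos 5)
Your proposal is correct and follows essentially the same route as the paper: subtract the two estimator systems, split the mismatch $G\hat{\mu}-G_j\hat{\mu}_j$ (with $G=B(\mathcal{H}X)$, $G_j=B(\mathcal{H}_jX)\Pi_j$) into an error-proportional piece plus a forcing piece controlled by the operator approximation rate of Theorems 1--2, and close with Gr\"onwall's inequality. The only material difference is your choice of intermediate term, $(\mathcal{H}X)\circ\hat{\mu}_j$ rather than the paper's $G_j\hat{\mu}$, which leaves the operator difference multiplying $\hat{\mu}_j$ instead of $\hat{\mu}$ and therefore obliges you to supply the uniform-in-$j$ a priori bound on $\|\hat{\mu}_j\|_P$ --- a step you correctly identify and carry out, and which the paper's splitting sidesteps.
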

\begin{proof}
Define the operators $G(t):P\rightarrow \mathbb{R}^m$ and $G_j(t):P \rightarrow \mathbb{R}^m$ for each $t\geq 0$ as
\begin{align*}
G(t)  & := B (\mathcal{H}X)(t),  \\
G_j(t)  & := B (\mathcal{H}_jX)(t) \Pi_j.
\end{align*}
The time derivative of the error in approximation can be expanded as follows:
{\small 
\begin{align*}
\frac{1}{2} \frac{d}{dt}\left (
( {\tilde{X}}_j, {\tilde{X}}_j )_{\mathbb{R}^m} + ({\tilde{\mu}}_j,{\tilde{\mu}}_j   )_P 
\right ) & = 
( \dot{\tilde{X}}_j, {\tilde{X}}_j )_{\mathbb{R}^m} + (\dot{\tilde{\mu}}_j,{\tilde{\mu}}_j   )_P 
   \\
&= (A\tilde{X}_j + G\hat{\mu} - G_j \hat{\mu}_j , \tilde{X}_j)_{\mathbb{R}^m} + \left ( -(G-G_j)^* X, \tilde{\mu}_j\right )_P \\
&= (A\tilde{X}_j, \tilde{X}_j)_{\mathbb{R}^m} 
+ \left (  (G-G_j)\hat{\mu}, \tilde{X}_j \right )_{\mathbb{R}^m}
+ \left ( G_j(\hat{\mu}-\hat{\mu}_j), \tilde{X}_j  \right )_{\mathbb{R}^m}
- \left ( (G-G_j)\tilde{\mu}_j, X \right )_{\mathbb{R}^m} \\
&\leq c (\tilde{X}_j, \tilde{X}_j)_{\mathbb{R}^m } 
+ \|(G-G_j)\hat{\mu} \|_{\mathbb{R}^m} \| \tilde{X}_j \|_{\mathbb{R}^m} +\\ & \qquad \qquad \qquad \qquad \| G_j\|_{\mathcal{L}(P,\mathbb{R}^m)} \| \tilde{\mu}_j \|_{P} \| \tilde{X}_j \|_{\mathbb{R}^m}+ \\ & \qquad \qquad \qquad  \qquad  \| G-G_j\|_{\mathcal{L}(P,\mathbb{R}^m)} \|\tilde{\mu}_j \|_{P} \| X\|_{\mathbb{R}^m}.
\end{align*}
}

We will next use a common inequality that can be derived from two applications of the triangle inequality.  We have
\begin{align*}
(a+b,a+b)=(a,a) + 2(a,b) + (b,b) &\geq 0, \\ 
(a-b,a-b)=(a,a) - 2(a,b) + (b,b) & \geq 0. 
\end{align*}
We conclude from this pair of inequalities that 
$$
|(a,b)| \leq \frac{1}{2} \left ( \|a\|^2 + \|b\|^2  \right ).
$$
The specific form that we apply this theorem is written as
\begin{equation}
|(a,b)|= |(\sqrt{\epsilon} a, \frac{1}{\sqrt{\epsilon}} b)|
\leq \epsilon \frac{\|a\|^2}{2} + \frac{1}{\epsilon} \frac{\|b\|^2}{2}.
\label{eq:ip_ab}
\end{equation}
We apply the inequality in Equation \ref{eq:ip_ab} to each term in which $\tilde{\mu}_j$ and $\tilde{X}_j$ appear in a product.
\begin{align*}
\frac{1}{2} \frac{d}{dt} \left ( 
\| \tilde{X}_j \|^2_{\mathbb{R}^m} +  \| \tilde{\mu}_j \|^2_{P)}
 \right) & \leq 
c \| \tilde{X}_j \|_{\mathbb{R}^m}^2 + \frac{1}{2a} \| (G-G_j)\hat{\mu} \|^2_{\mathbb{R}^m} 
+ \frac{a}{2} \|\tilde{X}_j \|^2_{\mathbb{R}^m} \\
& \text{\hspace*{.2in}} 
+ \frac{1}{2b} \| G_j \tilde{\mu}_j\|^2_{\mathbb{R}^m} + \frac{b}{2} \|\tilde{X}_j \|^2_{\mathbb{R}^m}
+ \frac{1}{2c} \| \tilde{\mu}_j \|^2_{P} \\& \text{\hspace*{.2in}}  + \frac{c}{2} \| G-G_j\|^2_{\mathcal{L}(P,\mathbb{R}^m)} \|X\|^2_{\mathbb{R}^m} .
\end{align*}
%
%\begin{framed}
%We will employ the integral form of Gronwall's Inequality to obtain our final convergence result.  Many forms of Gronwall's Inequality exist, and we will use a particularly simple version.  See Section 3.3.4 in \cite{is2012}.  If the piecewise continuous function $f$ satisfies the inequality
%$$
%f(t) \leq \alpha(t) + \int_0^t \beta(s) f(s) ds
%$$
%with some piecewise continuous functions $\alpha,\beta$ where $\alpha$ is nondecreasing, then
%$$
%f(t) \leq \alpha(t) e^{\int_0^t\beta(s)ds}.
%$$
%\end{framed}
Then
\begin{align*}
 \frac{d}{dt} \left ( 
\| \tilde{X}_j \|^2_{\mathbb{R}^m} +  \| \tilde{\mu}_j \|^2_{P} 
\right ) 
& \leq  c \| G-G_j \|^2_{\mathcal{L}(P,\mathbb{R}^m)} \|X\|^2_{\mathbb{R}^m} 
+ (2c + a + b)\|\tilde{X}_j\|^2_{\mathbb{R}^m} \\ & 
\text{\hspace*{.2in}}+ \left ( 
\frac{1}{c} + \frac{1}{b} \|G_j^*G_j \| 
\right ) \| \tilde{\mu}_j\|^2_{P}  + \frac{1}{a} \|G - G_j \|^2_{\mathcal{L}(P,\mathbb{R}^m)} \|\hat{\mu}\|^2_{P}. 
\end{align*}
We integrate this inequality in time from $0$ to $t$ to obtain
{\small
\begin{align*}
\| \tilde{X
}_j(t) \|^2_{\mathbb{R}^m} +   \| \tilde{\mu}_j(t) \|^2_{P} 
 & \leq 
\| \tilde{X}_j(0) \|^2_{\mathbb{R}^m} +  \| \tilde{\mu}_j(0) \|^2_{P} \\ & 
 \text{\hspace*{.2 in}} + \int_0^t c \|G(s) - G_j(s) \|^2_{\mathcal{L}(P,\mathbb{R}^m)} \| X(s)\|^2_{\mathbb{R}^m} ds \\
& \text{\hspace*{.2in}} + \int_0^t \left \{ 
(2c+a+b) \|\tilde{X}(s)\|^2_{\mathbb{R}^m} 
+ \left ( \frac{1}{c} + \frac{1}{b} \| G^*_j(s) G_j(s) \|  \right ) \|\tilde{\mu}_j\|^2_{P} \right \}ds \\  & \text{\hspace*{.2 in}}+ \int_0^t  \frac{1}{a} \|G(s) - G_j(s) \|^2_{\mathcal{L}(P,\mathbb{R}^m)} \|\hat{\mu}\|^2_{P} 
 ds.
\end{align*}
}
Choose $a,b>0$ large enough so that $(2c+a+b)>0$ and set $\gamma>1$.  If we define 
\begin{align*}
\gamma &:=\max \left (
2c+a+b, \frac{1}{c}+ \frac{\eta}{b}  \sup_{s\in [0,T]} \|G^*(s)G(s)\|,1
\right ), \\
\lambda_j(t)&:= \|(I-\Pi_j)\hat{\mu}(0)\|_{P} + \int_0^t  \|G(s) - G_j(s)\|^2_{\mathcal{L}(P,\mathbb{R}^m)} \left(c \|X\|^2_{\mathbb{R}^m}+ \frac{1}{a} \| \hat{\mu}\|^2_{P} \right) ds,
\end{align*}
then the inequality can be written as 
\begin{align*}
\|\tilde{X}_j(t)\|^2_{\mathbb{R}^m} + \|\tilde{\mu}_j(t)\|^2_{P} \leq \lambda_j(t)
+ \gamma \int_0^t \left (  \|\tilde{X}_j(s)\|^2_{\mathbb{R}^m} + \| \tilde{\mu}_j(s) \|^2_{P} \right ) ds.
\end{align*}
Gronwall's Inequality now completes the proof of the theorem (see Appendix C). 
\end{proof}

We also further investigate $\lambda_j(t)$ to derive the convergence rate for the approximate states and parameters evolving associated with level $j$ resolution. According to the convergence results  obtained in Theorem 1 we have $\|G(s)-G_j(s)\|_{\mathcal{L}(P,\mathbb{R}^m)} = \|B(\mathcal{H}X)(t)-B(\mathcal{H}_j X)(t)\Pi_j\| 
%&\leq \|B\|_{(\mathbb{R}^{q\times L},\|.\|_{q,L})}\|b(t)\|_{(\mathbb{R}^{q\times L},\|.\|)}\|H_{oper}(t)\|_{(\|.\|_{q,L}(\mathcal{L}))}\\
\leq C_2 2^{-(\alpha+1)j}$. Therefore, $\|G(s)-G_j(s)\|^2 \leq C_2^{2} 2^{-(\alpha+1)2j}$. It then follows that
\begin{align*}
\lambda_j(t) &= \|(I-\Pi_j)\hat{\mu}(0)\|_{P}+ \int_0^t 2^{-(\alpha+1)2j} \left(c \|X\|^2_{\mathbb{R}^m}+ \frac{1}{a} \| \hat{\mu}\|^2_{P} \right)\mathrm{d}s \\
&\leq \|(I-\Pi_j)\| \| \hat{\mu}(0)\|_{P}+ 2^{-(\alpha+1)2j}\left(c \|X\|^2_{\mathbb{R}^m}+ \frac{1}{a} \| \hat{\mu}\|^2_{P} \right) t.\\
\end{align*}
If $t$ $\simeq$ $C_3 2^{(\alpha +1)j}$, then
$
\lambda_j(t) < \mathcal{O}(2^{-(\alpha+1)j}) \quad \text{for} \quad t \in [0,C_3 2^{(\alpha +1)j}].
$

\section{Adaptive Control Synthesis}
\label{sec:ident}
In order to estimate the function $\mu$ that weighs the contribution of history dependent kernels to the equations of motion, we first map it to an n-dimensional subspace of square integrable functions using a projection operator $\Pi^n: P \mapsto P^n$.
Let  
\begin{equation}
\dot{X}=AX+B((\mathcal{H}X) \circ ( \mu- \hat{\mu})+v)
\end{equation}
be the governing equation of a robotic system after applying a feedback linearization control signal as mentioned in Equation \ref{eqn:3} with $u=v-(\mathcal{H}X) \circ  \hat{\mu}$. We substitute $\mu = \Pi^n \mu+ (I-\Pi^n) \mu$ and write
\begin{equation}
\dot{X}=AX+B((\mathcal{H}X) \circ  ( \Pi^n \mu- \hat{\mu})+v)+B((\mathcal{H}X) \circ (I- \Pi^n) \mu).
\end{equation}
Finally, by replacing $d= \{(\mathcal{H}X)(I-\Pi^n) \circ \mu \}$ we obtain
\begin{equation}
\dot{X}=AX+B((\mathcal{H}X) \circ ( \Pi^n  \tilde{\mu})+v+d),
\end{equation}
where
\begin{equation}
\dot{\tilde{\mu}} = -((\mathcal{H}X) \Pi ^n )^* B^T P X.
\end{equation}
\begin{theorem} Suppose the state equations have the form of Equation \ref{eqn:3} and the matrix $\mathcal{P}$ is a symmetric positive definite solution of the Lyapunov equation $A^T\mathcal{P}+\mathcal{P}A=-Q$ where $Q>0$. Then by employing the update law $\dot{\tilde{\mu}} = -((\mathcal{H}X) \Pi ^n )^* B^T \mathcal{P} X$, the control signal
 \begin{align}
    v(t)= 
\begin{cases}
    -k\frac{B^T \mathcal{P} X}{\|B^T \mathcal{P} X\|},& \text{ if } \|B^T \mathcal{P} X\|\geq \epsilon\\
    -\frac{k}{\epsilon}B^T \mathcal{P} X,              & \text{ if } \|B^T \mathcal{P} X\|<\epsilon
\end{cases}
\label{eqn:sliding_C}
\end{align}
with $k>\|d\|$ drives the tracking error dynamics of the closed loop system is uniformly ultimately bounded and its norm is eventually  $O(\epsilon)$.

\end{theorem}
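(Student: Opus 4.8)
The plan is to build a single Lyapunov functional on the product state space $\mathbb{R}^m\times P$ that combines the quadratic form defined by $\mathcal{P}$ with the squared $P$-norm of the parameter error, and to exploit the fact that the update law has been chosen precisely so that the parametric uncertainty disappears from the derivative of this functional. Concretely, I would take
\[
V(t):=X(t)^{\top}\mathcal{P}X(t)+\|\tilde{\mu}(t)\|_P^2,
\]
which is positive definite and radially unbounded because $\mathcal{P}>0$. Differentiating along the closed-loop trajectory, substituting $\dot X=AX+B\big((\mathcal H X)\circ\Pi^n\tilde\mu+v+d\big)$, and using the Lyapunov identity $A^{\top}\mathcal{P}+\mathcal{P}A=-Q$ yields, with the shorthand $\sigma:=B^{\top}\mathcal{P}X$,
\[
\dot V=-X^{\top}QX+2\,X^{\top}\mathcal{P}B\,(\mathcal H X)\circ\Pi^n\tilde\mu+2\,\sigma^{\top}(v+d).
\]

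The crux is the cancellation of the middle term. Rewriting the pairing in the $P$ inner product, $X^{\top}\mathcal{P}B(\mathcal H X)\circ\Pi^n\tilde\mu=\big(((\mathcal H X)\Pi^n)^{*}B^{\top}\mathcal{P}X,\ \tilde\mu\big)_P$, so the contribution $2(\tilde\mu,\dot{\tilde\mu})_P$ generated by the update law $\dot{\tilde\mu}=-((\mathcal H X)\Pi^n)^{*}B^{\top}\mathcal{P}X$ cancels it exactly, leaving $\dot V=-X^{\top}QX+2\sigma^{\top}(v+d)$. I regard this exact cancellation --- valid even in the projected, infinite-dimensional setting because the identical adjoint operator $((\mathcal H X)\Pi^n)^{*}$ appears in both the plant and the learning law --- as the structural heart of the proof.

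With the parameter error removed, the control is analyzed in two regions. When $\|\sigma\|\ge\epsilon$ one has $2\sigma^{\top}(v+d)=-2k\|\sigma\|+2\sigma^{\top}d\le-2(k-\|d\|)\|\sigma\|\le0$ by the gain condition $k>\|d\|$, so $\dot V\le-X^{\top}QX\le-\lambda_{\min}(Q)\|X\|^2\le0$. When $\|\sigma\|<\epsilon$ one has $2\sigma^{\top}(v+d)=-\tfrac{2k}{\epsilon}\|\sigma\|^2+2\sigma^{\top}d$, and completing the square in $\|\sigma\|$ bounds this by $\epsilon\|d\|^2/(2k)=O(\epsilon)$, so that $\dot V\le-\lambda_{\min}(Q)\|X\|^2+O(\epsilon)$. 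Combining the two cases, $\dot V<0$ whenever $\|X\|$ lies outside a ball whose radius vanishes with $\epsilon$, which is exactly uniform ultimate boundedness with an ultimate bound of order $\epsilon$. To upgrade this to the asymptotic conclusion I would invoke Barbalat's lemma: the non-increase of $V$ outside the boundary layer first gives boundedness of $X$ and $\tilde\mu$, hence of $\sigma$, $v$, and --- through boundedness of the ridge function $\gamma$ and therefore of $\mathcal H X$ --- of $d$, $\dot X$, and $\dot{\tilde\mu}$; this renders the relevant integrand uniformly continuous, and integrating the outer-region inequality then forces the tracking error into the boundary layer and keeps it there.

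I expect the principal obstacle to be securing the uniform-in-time bounds on the history-dependent operator $\mathcal H X$ along the closed-loop trajectory. These bounds are needed twice: to guarantee that the truncation disturbance $d=(\mathcal H X)(I-\Pi^n)\circ\mu$ is bounded, so that a finite gain $k>\|d\|$ exists (and indeed $\|d\|$ can be driven small using the approximation rate of Theorem 1), and to certify the uniform-continuity hypothesis of Barbalat's lemma. A secondary subtlety is making the cross-term pairing fully rigorous in $\mathcal L(P,\mathbb R^q)$, and reconciling the sharp $O(\epsilon)$ statement for the sliding variable with the $O(\sqrt{\epsilon})$ bound that the crude completion of squares gives for $\|X\|$; the former follows by tracking $\sigma$ directly once the surface dynamics are known to be stable.
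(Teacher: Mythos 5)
Your proposal follows essentially the same route as the paper: the same composite Lyapunov functional $X^{\top}\mathcal{P}X+\|\tilde{\mu}\|_P^2$, the same exact cancellation of the parametric cross term via the adjoint-based update law, and the same two-region analysis of the sliding term yielding $\dot V\le -\tfrac{1}{2}X^{\top}QX+O(\epsilon)$ and hence uniform ultimate boundedness (the paper closes by citing Khalil's ultimate-boundedness theorem rather than Barbalat, and uses the cruder inner-region bound $\epsilon k$ in place of your completed square, but these are cosmetic differences). The obstacles you flag --- boundedness of $d$ along the closed-loop trajectory and the rigor of the pairing in $\mathcal{L}(P,\mathbb{R}^q)$ --- are real and are not addressed in the paper's proof either.
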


\begin{proof}
We choose the Lyapunov function 
\begin{equation}
V=\frac{1}{2}X^T \mathcal{P} X + \frac{1}{2}\left( \tilde{\mu},\tilde{\mu}\right)_{P}
\end{equation} 
where $\mathcal{P}$ is the solution of the Lyapunov equation $ A^T \mathcal{P} + \mathcal{P} A = -Q$. The derivative of the Lyapunov function $V$ along the closed loop system trajectory is
\begin{align*}
\dot{V} &=\frac{1}{2} (\dot{X}^T \mathcal{P} X + X^T \mathcal{P} \dot{X})+\left( \dot{\tilde{\mu}},\tilde{\mu}\right)_p\\
&= \frac{1}{2}\big(AX+B((\mathcal{H}X) \circ ( \Pi^n  \tilde{\mu})+v+d)\big)^T \mathcal{P} X +X^T \mathcal{P} (AX+B((\mathcal{H}X) \circ ( \Pi^n  \tilde{\mu})+v+d)\big) + \left( \dot{\tilde{\mu}},{\mu}\right)_P \\
&= \frac{1}{2}X^T (A^T \mathcal{P} + \mathcal{P} A)X+ X^T PB ( v +d) +X^T \mathcal{P} B \big((\mathcal{H}X) \circ ( \Pi^n  \tilde{\mu})\big) + \left( \dot{\tilde{\mu}},{\mu}\right)_P\\
&= -\frac{1}{2}X^T Q X + X^T \mathcal{P} B (v +d) + \left( \dot{\tilde{\mu}} + ((\mathcal{H}X) \Pi ^n )^* B^T \mathcal{P} X, \tilde{\mu}\right)_P\\
&=  -\frac{1}{2}X^T Q X + X^T  \mathcal{P} B ( v +d).
\end{align*}
Therefore we have 
\begin{align*}
\dot{V} & \leq   -\frac{1}{2}X^T Q X + X^T  \mathcal{P} B ( v +d), \\
& \leq -\frac{1}{2}X^T Q X + \begin{cases} 
X^T\mathcal{P}B\left(-k\frac{B^T \mathcal{P} X}{\|B^T \mathcal{P} X\|} +d \right) & \text{ if } \|B^T \mathcal{P} X\|\geq \epsilon \\
X^T\mathcal{P}B\left( -\frac{k}{\epsilon}B^T \mathcal{P} X +d \right) & \text{ if } \|B^T \mathcal{P} X\|\leq \epsilon
\end{cases}, \\ & \leq
 -\frac{1}{2}X^T Q X +  \begin{cases}
-\left(k-\|d\|\right)\|B^T\mathcal{P}X\|& \text{ if } \|B^T \mathcal{P} X\|\geq \epsilon\\ 
\epsilon k & \text{ if } \|B^T \mathcal{P} X\|\leq \epsilon
\end{cases}\\
& \leq -\frac{1}{2}X^T Q X+ \epsilon k.
\end{align*}
By Theorem 4.18 in \cite{khalil} we conclude that there is a $\bar{T}>0$ and $\tau>0$ such that $\|X(t) \| \leq \bar{C} \epsilon $ for all $t\geq \bar{T}$.

% The Lyapunov function $V$ is positive definite and non-increasing. If the control signal $v$ is chosen such that the term $ (v+d)$ is bounded using the sliding mode controller shown in Equation \ref{eqn:sliding_C}, we can choose $\epsilon$ small enough to guarantee 
% $$ \dot{V}=-\frac{1}{2} X^TQX+X^T \mathcal{P} B \epsilon< 0 $$
% The rest of the proof follows by using Barbalat's Lemma. We investigate the uniform boundedness of all the terms in Equation \ref{eqn:2} and choice of the input torque $\tau$. The matrices $G_0 $ and $G_1$ are selected such that $A$ is Hurwitz. Also, the output of the history dependent kernels which are constructed by two bounded ridge functions is uniformly bounded. In addition, From the coersivity of the generalized mass matrix $M(q)$ and the expansion of $C(q,\dot{q})$ in terms of Christofel symbols, we conclude that $\dot{X}$ is uniformly bounded.  The uniform boundedness of $\dot{X}$ guarantees that the function $\ddot{V}$ is uniformly bounded and using Barbalat's lemma  $\dot{V}\rightarrow 0$. Finally, we conclude that $X\rightarrow 0$.

\end{proof}

\iffalse
\begin{equation}
\label{eqn:4}\frac{d}{dt}
\begin{bmatrix}
X\\
{\tilde{\mu}}
\end{bmatrix}
=
\begin{bmatrix}
A&B({\mathcal{H}}X){\Pi}^{n}\\
-(B({\mathcal{H}}X){\Pi}^{n})^{*}P&0
\end{bmatrix}
\begin{bmatrix}
X\\
\tilde{\mu}
\end{bmatrix}
+
\begin{bmatrix}
B({\mathcal{H}}X)(I-{\Pi}^{n}){\mu} \\
0
\end{bmatrix}
+
\begin{bmatrix}
B \\
0
\end{bmatrix}
u
\end{equation}

\fi

\section{Numerical Simulations}
Our principle approximation result, the proposed online identification, and adaptive control of systems with history dependent forces are verified in this section.  In the first experiment, we validate the operator approximation error bound presented in Theorem 1. In the second experiment, we model a wind tunnel single wing section with a leading and trailing edge flaps and apply the proposed sliding mode adaptive controller presented in Theorem 4. We illustrate the stability of the closed loop system and convergence of the closed-loop system trajectories to the equilibrium point.
\label{sec:numerical}
\subsection{Operator Approximation Error}
In this section we consider a collection of numerical experiments to validate the operator approximation rates derived in Theorem 1.
In order to show that Equation \ref{eq:error_2} holds, we choose a function $\mu(s)$ over $\Delta$ and then calculate $(h_j f)(t)\circ \mu_j$ for different levels of refinement.
Since the computation of $(hf)(t)\circ \mu$ exactly is numerically infeasible, we choose $J \gg j$ as the finest level of refinement in our simulation. According to Theorem 1, we have
\begin{equation*}
|(h_J f)(t)\circ \mu_J-  (hf)(t)\circ \mu|
 \leq C_J 2^{-(\alpha+1)J},
\end{equation*}
and for $j\ll J$ we see that
\begin{equation*}
|(h_j f)(t)\circ \mu_j-  (hf)(t)\circ \mu|
\leq C_j 2^{-(\alpha+1)j}.
\end{equation*}
Assuming $C=\max\{C_j,C_J\}$ and using the triangle inequality, we obtain
\begin{equation}
\begin{aligned}
|(h_J f)(t)&\circ \mu_J-(h_j f)(t)\circ \mu_j|\\ &\leq C(2^{-(\alpha+1)J}+2^{-(\alpha+1)j})
\end{aligned}
\label{eq:numerical}
\end{equation}
Therefore, given the weights $\mu_J$ for the finest level of refinement $J$, we can evaluate $\mu_j=\Pi_j \mu_J$ and numerically verify Equation \ref{eq:numerical}.
\begin{figure}[h!]
\centering
\begin{subfigure}[b]{.03\textwidth}
\includegraphics[width=.5cm]{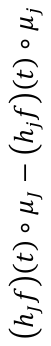}
\end{subfigure}
\begin{subfigure}[b]{.65\textwidth}
\includegraphics[width=9cm]{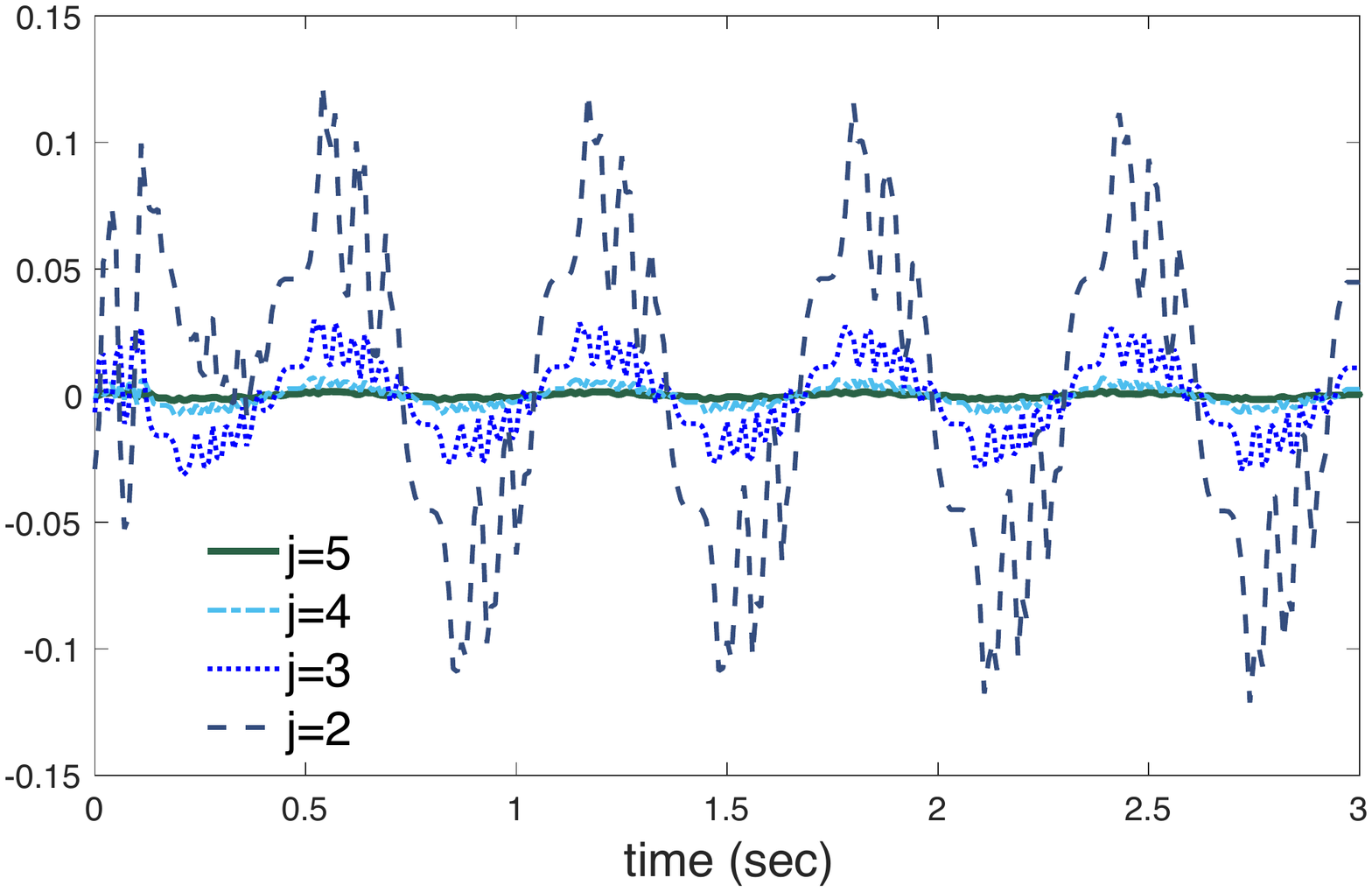}
\end{subfigure}
\caption{Error for different resolution simulations, $J=7$}
\label{fig:ErrorJ7}
\end{figure}
Figure \ref{fig:ErrorJ7} shows the simulation results for $J=7$ and $j=2,3,4,5$. The error term attenuates with increasing j. In order to investigate the rate of attenuation, we evaluate constant $C$ for different levels of refinements. As shown in figure \ref{fig:Cvsj}, $C$ is approximately constant with respect to $j$ which agrees with the result from Equation \ref{eq:numerical}.

\begin{figure}[h!]
\centering
\begin{subfigure}[b]{.04\textwidth}
\includegraphics[width=.65cm]{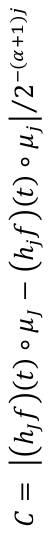}
\end{subfigure}
\begin{subfigure}[b]{.6\textwidth}
\includegraphics[width=9.2cm]{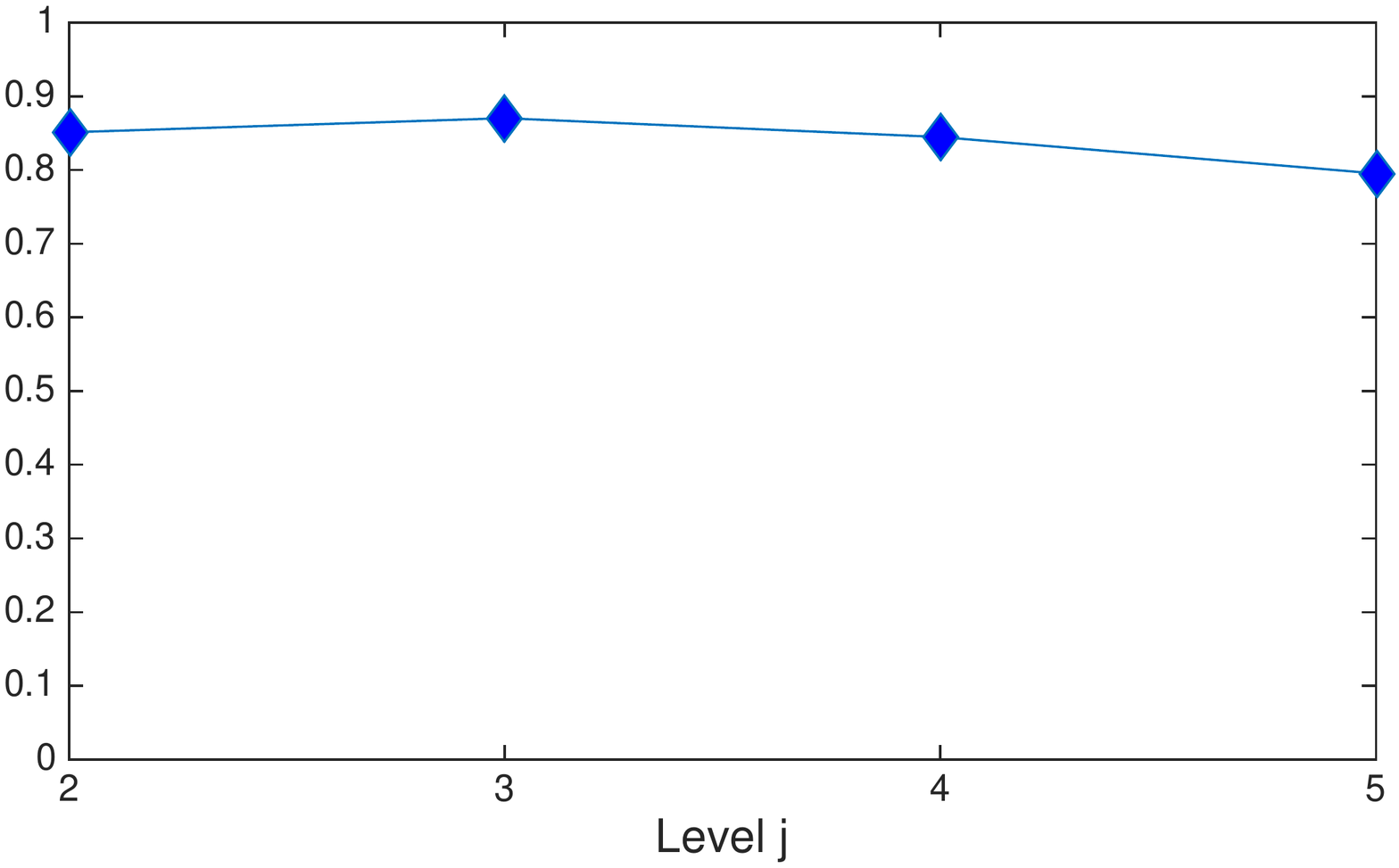}
\end{subfigure}
\caption{$C$ for different level j refinement simulations}
\label{fig:Cvsj}
\end{figure}

\subsection{Online Identification of History Dependent Aerodynamics and Adaptive Control for a Simple Wing Model}
\label{sec:numerics2}
The reformatted governing equations of the system take the form of Equation \ref{eqn:2} where $Q_a(t,\mu)$ is the vector of generalized history dependent aerodynamic loads. The dynamic equation of the system can be written in the form of Equation \ref{eq:1st_order}, where the history dependent term $M^{-1}(q)Q_a(t,\mu)$ is  rewritten in terms of a history dependent operator $(\mathcal{H}X)(t)$ acting on the distributed parameter function $\mu$. The history dependent operator includes a family of fixed history dependent kernels and the distributed parameters $\mu$ act as a weighting vector that determines the contribution of a specific history dependent kernel to the overall history dependent operator. 
\begin{figure}[h!]
\centering
\includegraphics[width=12cm]{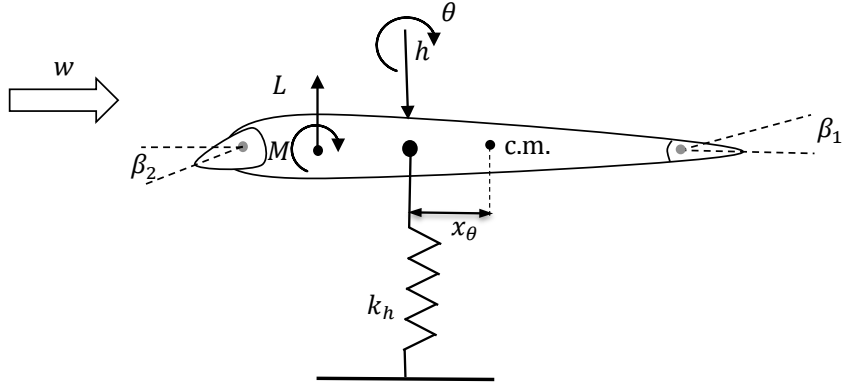}
\caption{Prototypical model for a wing section }
\label{fig:wing_model}
\end{figure}

We perform an offline identification based on a set of experimental data collected from a wind tunnel experiments or CFDsimulations. These define a nominal model for the history dependent aerodynamic loads that appear in the governing equations of the system. We can exploit the model in the numerical simulations to perform an online estimation of the history dependent aerodynamics and adaptive control of a simple wing model. 
The details of offline identification of history dependent aerodynamics follow the steps explained in \cite{Dadashi2016}. 

The model developed in Figure \ref{fig:wing_model}  is chosen to validate our proposed adaptive sliding mode controller where $w$ is the velocity of wind, $k_h$ is spring constant in plunge, $k_\theta$ is a spring constant in pitch, $\theta$ is the pitch angle, $h$ is the plunge displacement, $c_\theta$ and $c_h$ are viscous damping coefficients, $m$ and $I_\theta$ are the mass and moment of inertia and, $x_\theta$ is the non-dimensionalized distance between center of mass and the elastic axis. Finally, $L$ and $M$ are lift and moment generated by the leading and trailing edge flaps. The angles $\beta_1$ and $\beta_2$ define the rotation of the trailing edge and leading edge flaps respectively. The dynamic equations of the wing model is derived in the appendix C as

\begin{equation}
\begin{bmatrix}
m & m x_\theta \\ m  x_\theta  & mx_\theta^2+ I_\theta
\end{bmatrix}
\begin{Bmatrix} \ddot{h} \\ \ddot{\theta}\end{Bmatrix}+ 
\begin{bmatrix} c_h & 0 \\ 0 & c_\theta \end{bmatrix}
\begin{Bmatrix} \dot{h} \\ \dot{\theta}\end{Bmatrix}+
\begin{bmatrix} k_h & 0 \\ 0 & k_\theta \end{bmatrix}
\begin{Bmatrix} h \\ \theta\end{Bmatrix}= \begin{Bmatrix} L \\ 0 \end{Bmatrix} + \begin{Bmatrix} f_1(\beta_1, \beta_2) \\ f_2(\beta_1, \beta_2) \end{Bmatrix}.
\label{eqn:Wing}
\end{equation}
We have assumed the aerodynamic moment $M$ to be zero and the distance  $x_a$  between the aerodynamic center $A$ and hinge point to be negligible to simplify the simulation. The unsteady aerodynamic lift is $ L= Q_a(t,\mu)$ where $Q_a(t,\mu)=(\mathcal{H}X)\circ \mu$ reflects the history dependent nature of aerodynamic loads. We rewrite Equation \ref{eqn:Wing} to achieve the standard form presented in Equation \ref{eqn:2}.  

The adaptive controller presented in Theorem 4 is composed of two parts. The first part compensates for the flutter generated by the history dependent aerodynamic forces through online identification of the aerodynamics. The second part employs an sliding mode controller to compensate for modeling errors.
\begin{figure}[h]
\centering
\begin{subfigure}[b]{.45\textwidth}
\includegraphics[width=7.5cm]{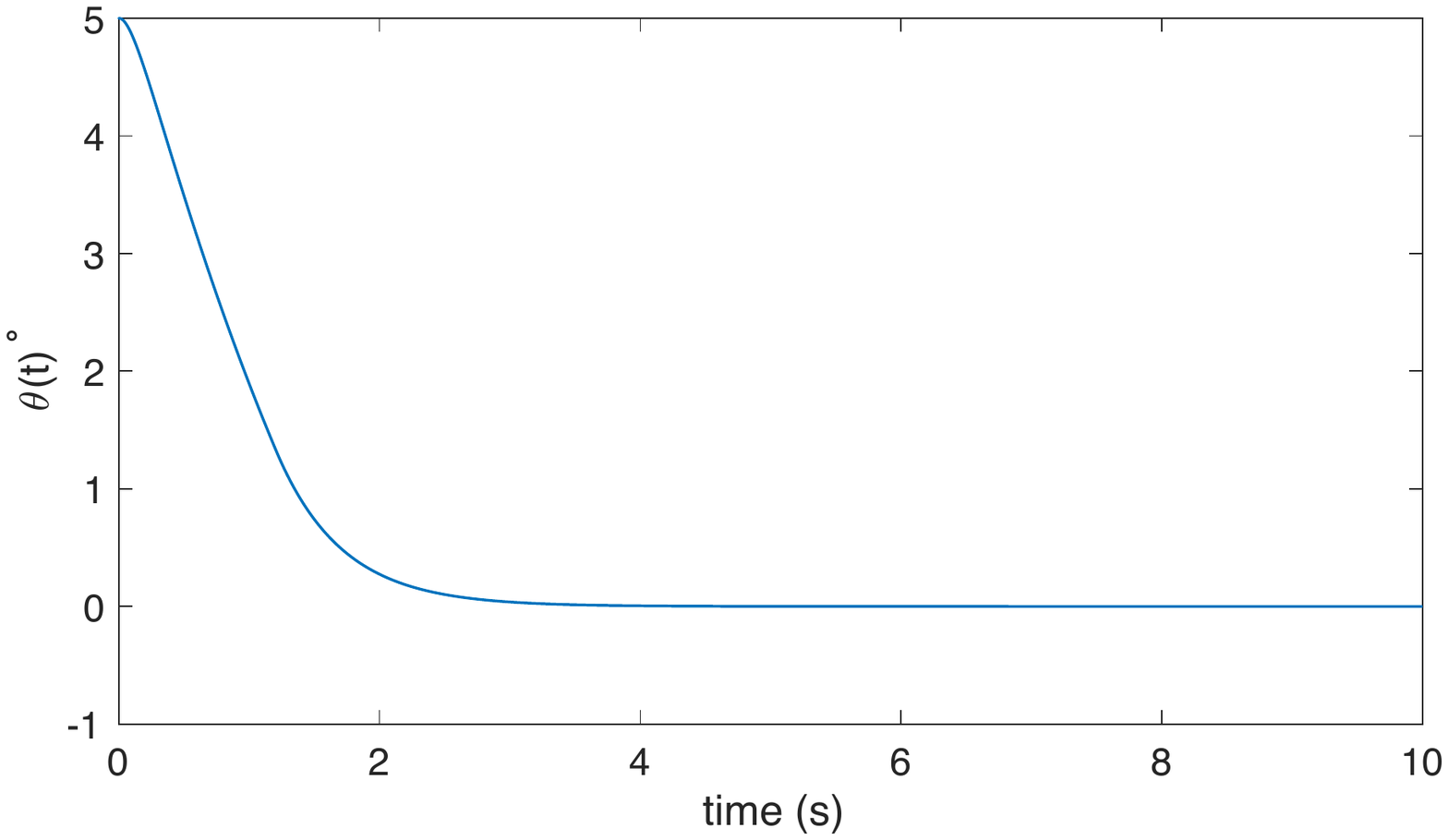}
\subcaption{State trajectory, $\theta(t)$}
\end{subfigure}
\begin{subfigure}[b]{.45\textwidth}
\includegraphics[width=7.7cm]{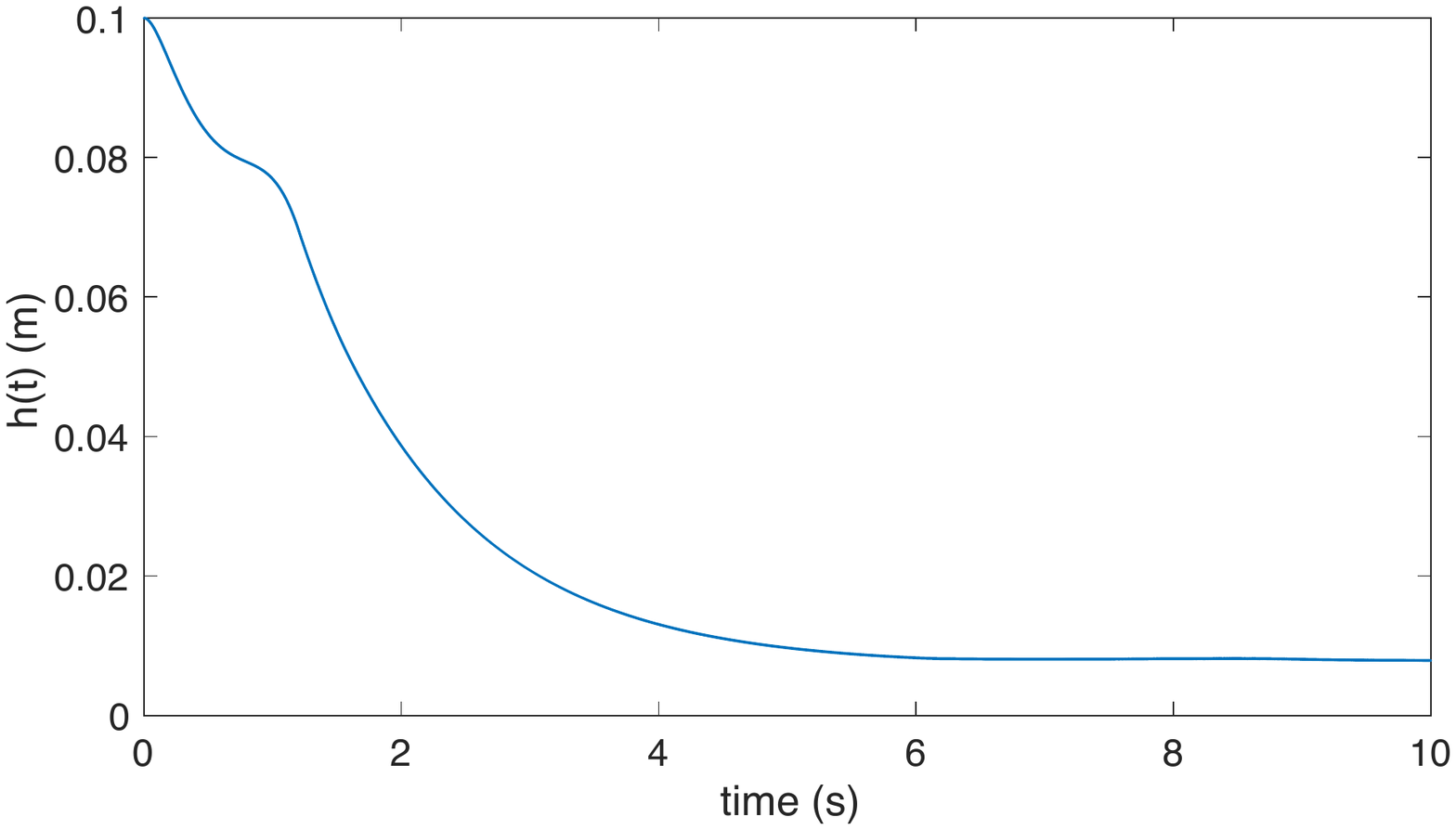}
\subcaption{State trajectory, $h(t)$}
\end{subfigure}
\begin{subfigure}[b]{.45\textwidth}
\includegraphics[width=7.5cm]{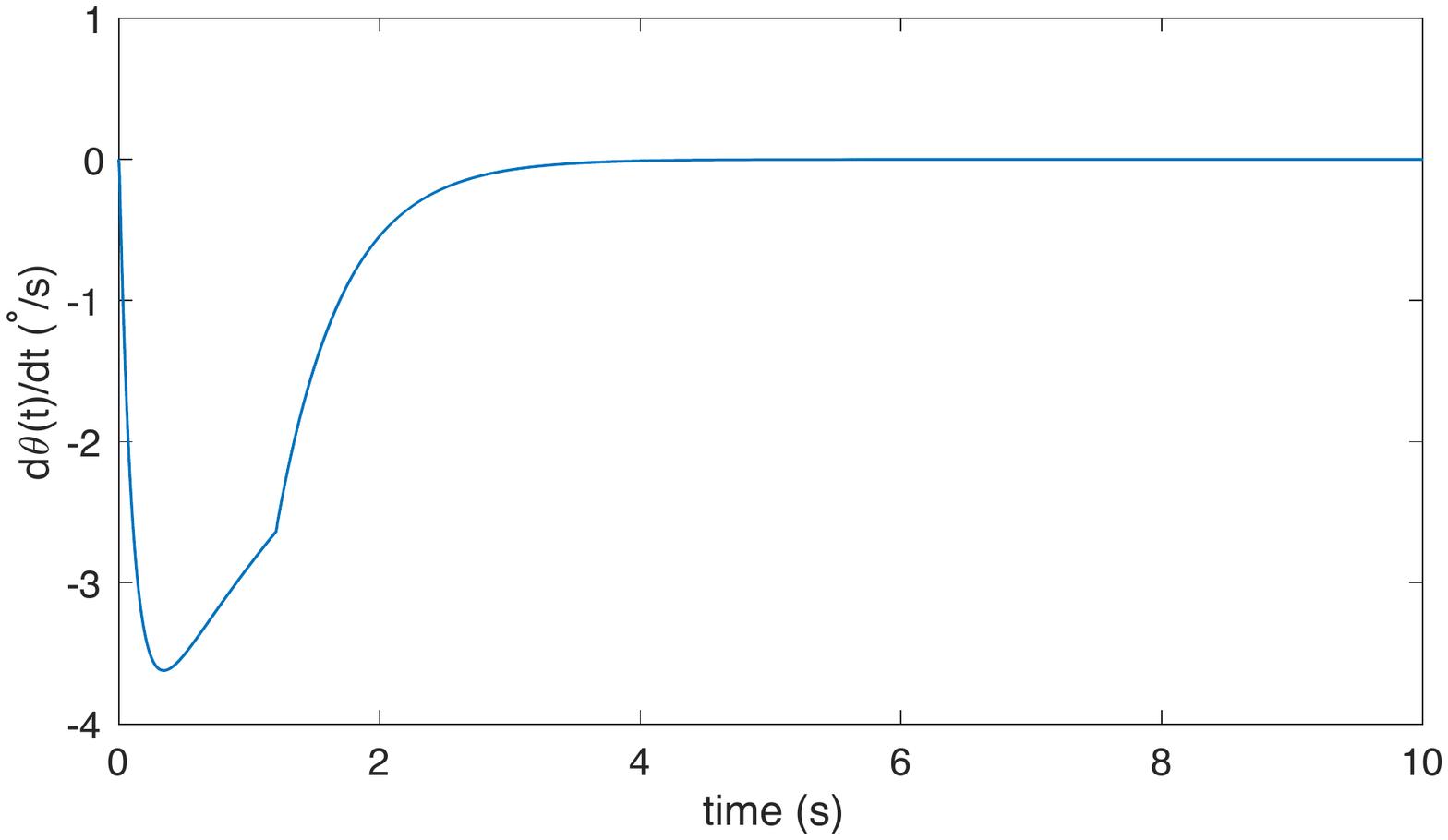}
\subcaption{State trajectory, $\dot{\theta}(t)$}
\end{subfigure}
\begin{subfigure}[b]{.45\textwidth}
\includegraphics[width=7.7cm]{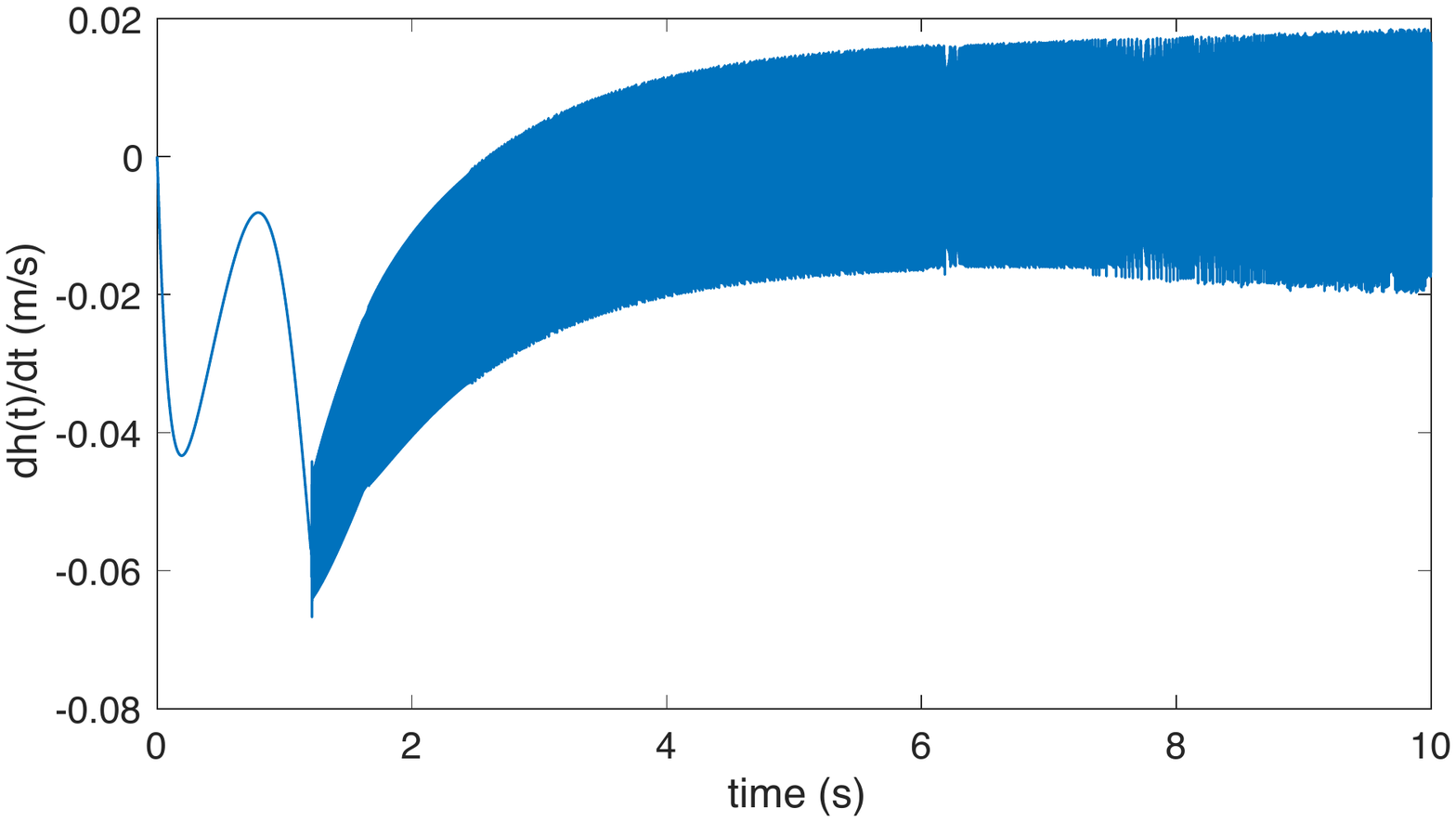}
\subcaption{State trajectory, $\dot{h}(t)$}
\end{subfigure}
\begin{subfigure}[b]{.45\textwidth}
\includegraphics[width=7.5cm]{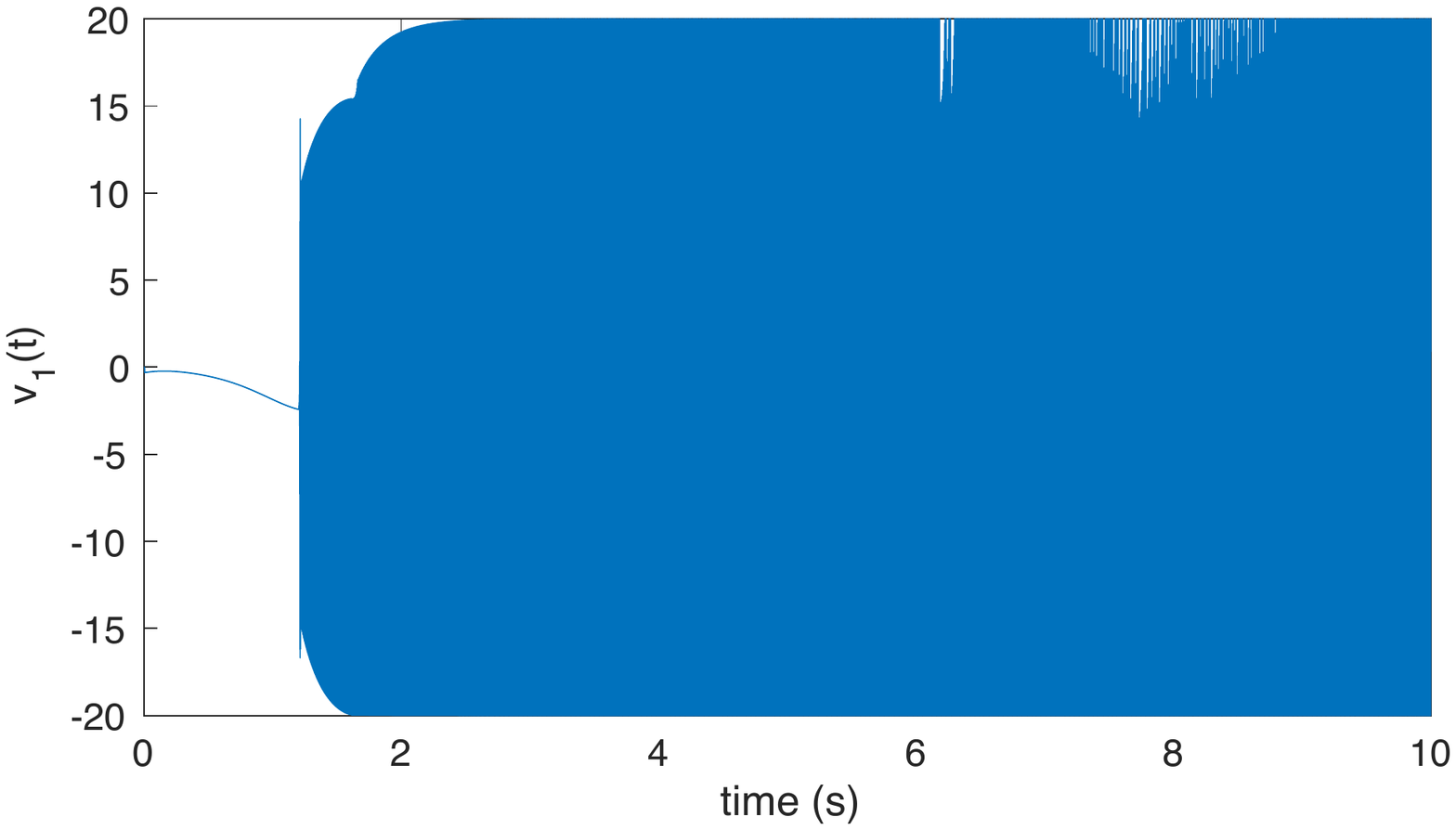}
\subcaption{Sliding mode control input signal, $v_1(t)$}
\end{subfigure}
\begin{subfigure}[b]{.45\textwidth}
\includegraphics[width=7.7cm]{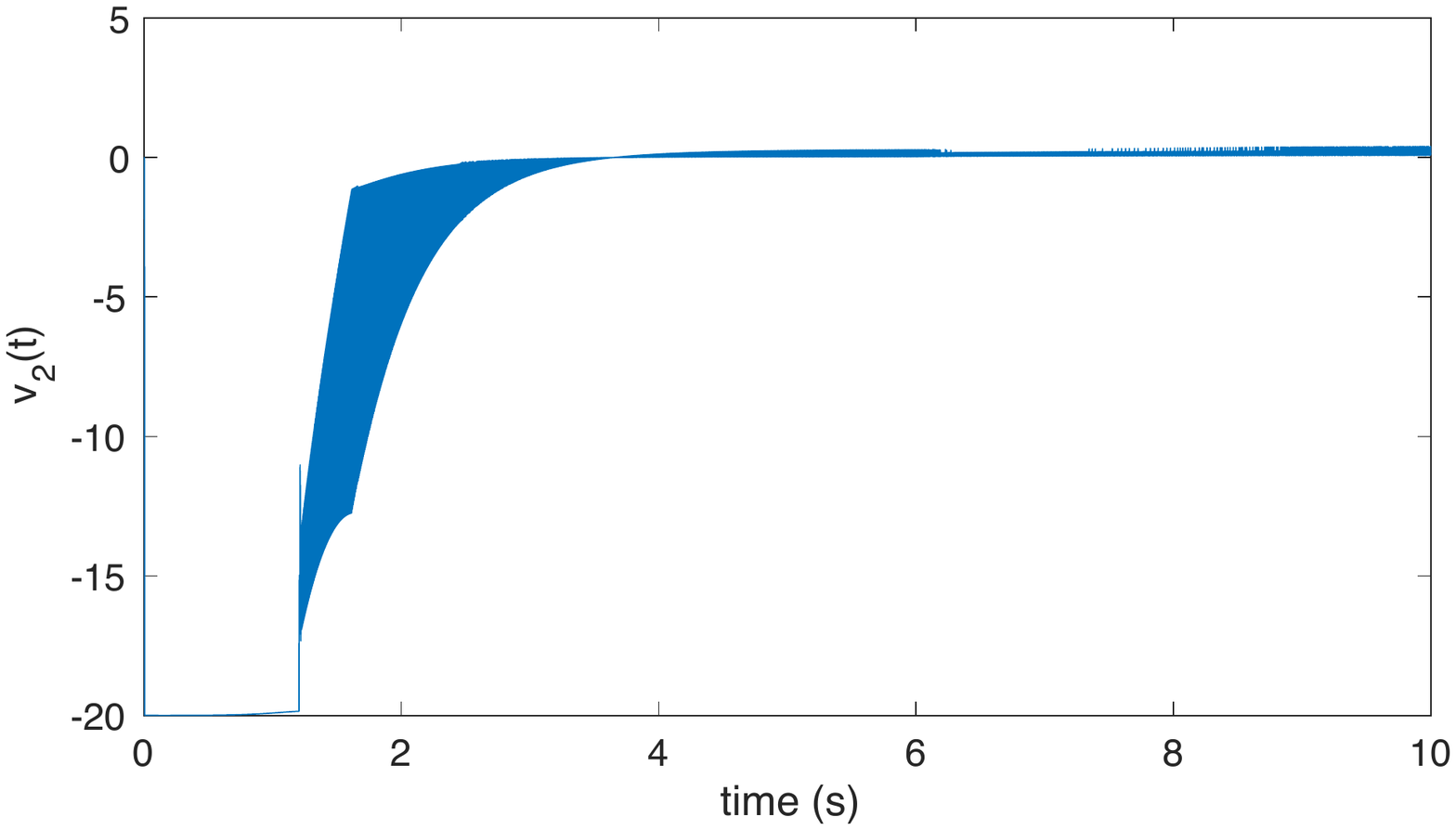}
\subcaption{Sliding mode control input signal, $v_2(t)$}
\end{subfigure}
\caption{Time histories of the states and input signals for $\epsilon=0.01$, $t_h=0.001$(sec) and, $k=20$ }
\label{fig:control_results_chattering}
\end{figure}

It is noteworthy that numerical time integration of the evolution equations must accommodate history dependent terms. Since the dynamics of such systems are given via functional differential equations, the ordinary integration rules are not directly applicable. We exploit the predictor-corrector integration rule that has been introduced first in \cite{tavernini}. We also refer the interested reader to our previous paper \cite{dadashi2016_CDC} for details of such integration rules.

Figure \ref{fig:control_results_chattering} Shows the simulation results for the case where $\epsilon =0.01$ and $t_h=0.001$. The system response eventually enters in a $\epsilon$ neighborhood of the sliding manifold. However, as depicted in the figure a chattering behavior occurs in the control signal and system trajectories. We trace this behavior back to the integration error induced by the size of time step.  When we increase $\epsilon$ or reduce the integration time step, the control signal and system trajectories become smooth. The simulation results for $\epsilon = 0.01$ and $t_h=0.0005 $ are depicted in Figure \ref{fig:control_results_smooth1}.  The system trajectories converge to a neighborhood of zero or the set $\mathcal{M}$ in Equation \ref{eq:M} with time and the control signals are relatively smooth. Also, Figure \ref{fig:control_results_smooth2} shows the case when $\epsilon = 0.1$ and $t_h=0.001 $. The convergence rate of the signals to zero is slower but the results do not show any chattering. Therefore, the proposed smooth sliding mode adaptive controller proves to be effective to identify and compensate for the unknown history dependent aerodynamic forces.  

\begin{figure}[h!]
\centering
\begin{subfigure}[b]{.45\textwidth}
\includegraphics[width=7.5cm]{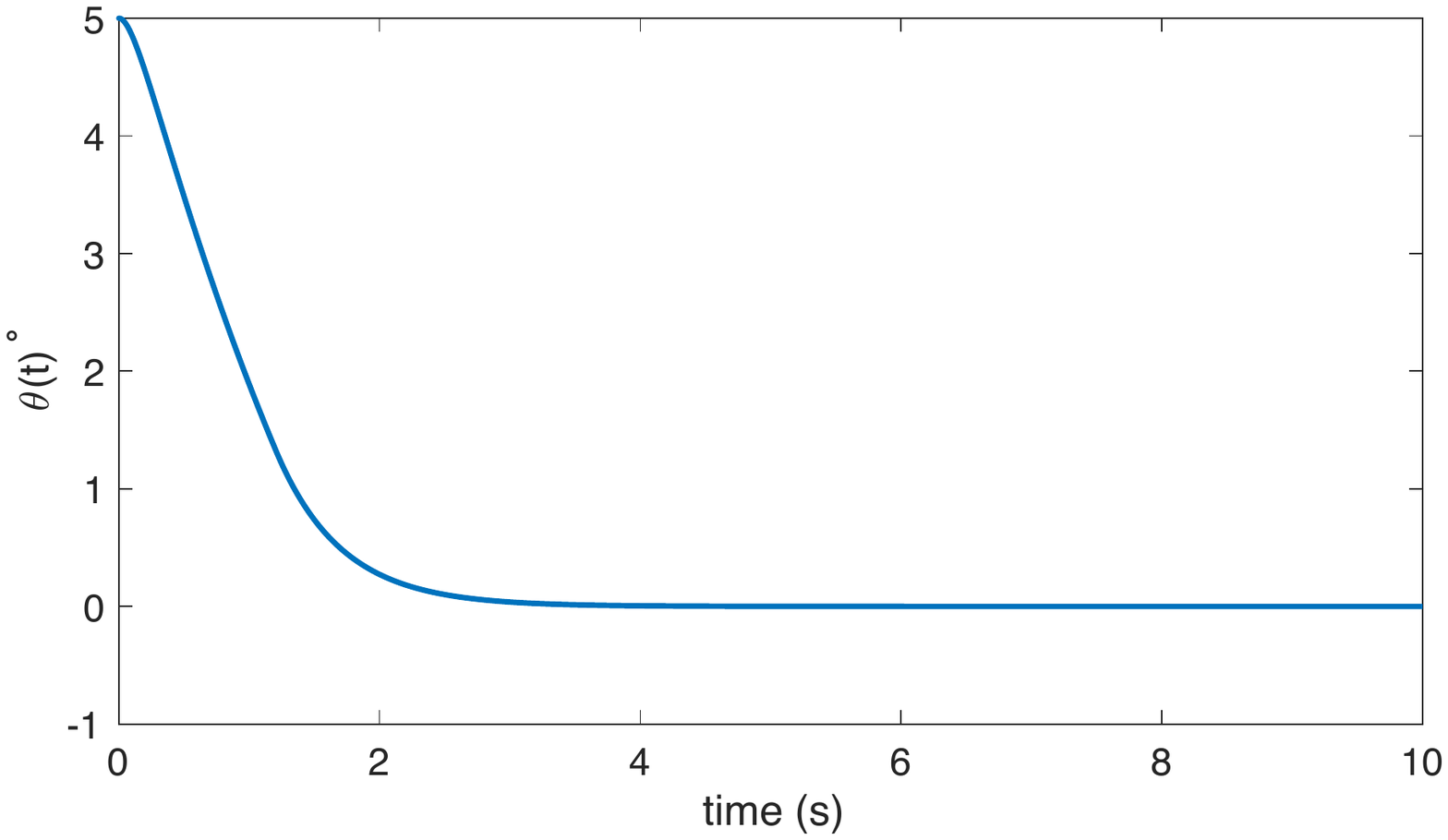}
\subcaption{State trajectory, $\theta(t)$}
\end{subfigure}
\begin{subfigure}[b]{.45\textwidth}
\includegraphics[width=7.7cm]{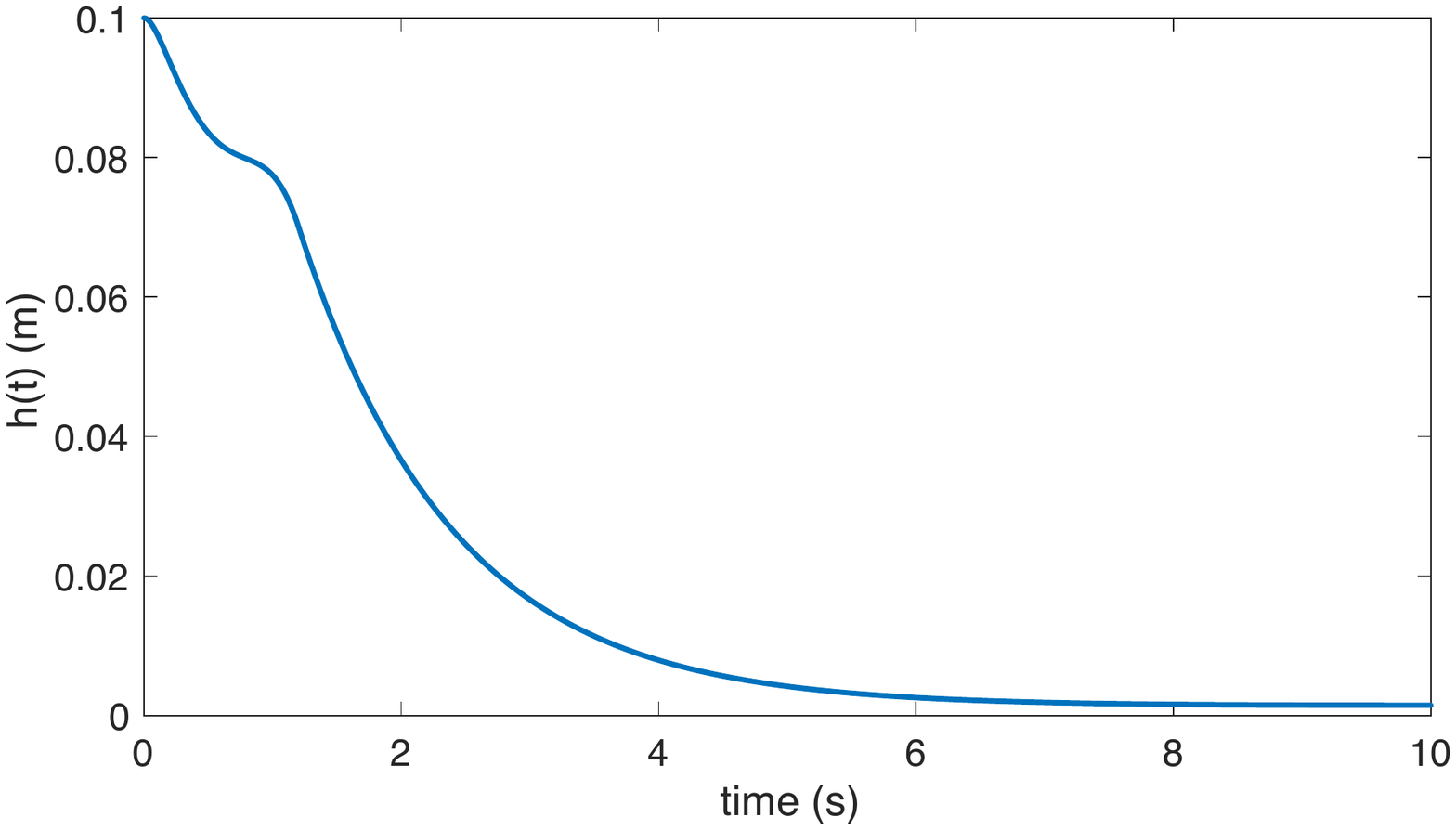}
\subcaption{State trajectory, $h(t)$}
\end{subfigure}
\begin{subfigure}[b]{.45\textwidth}
\includegraphics[width=7.5cm]{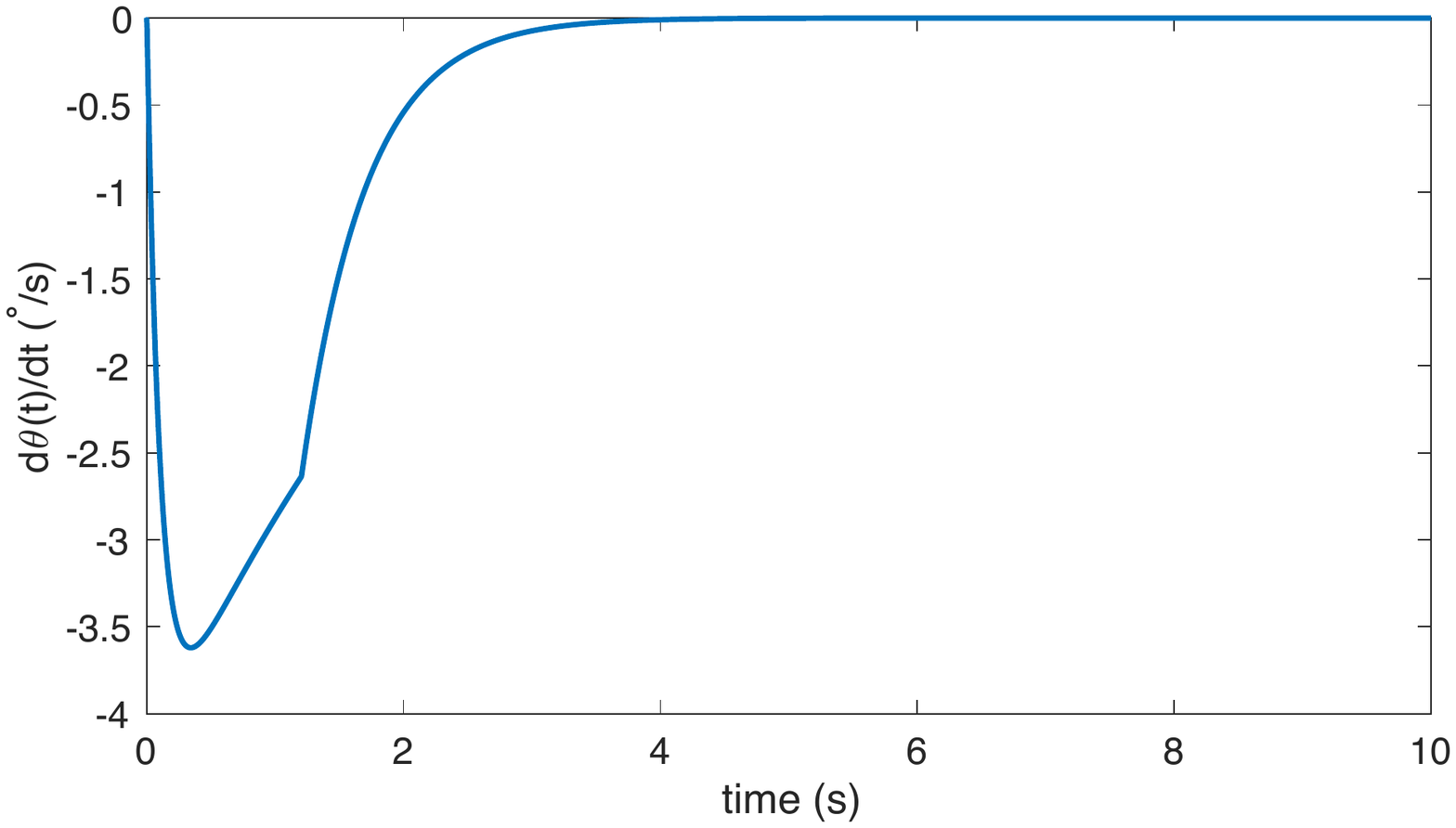}
\subcaption{State trajectory, $\dot{\theta}(t)$}
\end{subfigure}
\begin{subfigure}[b]{.45\textwidth}
\includegraphics[width=7.7cm]{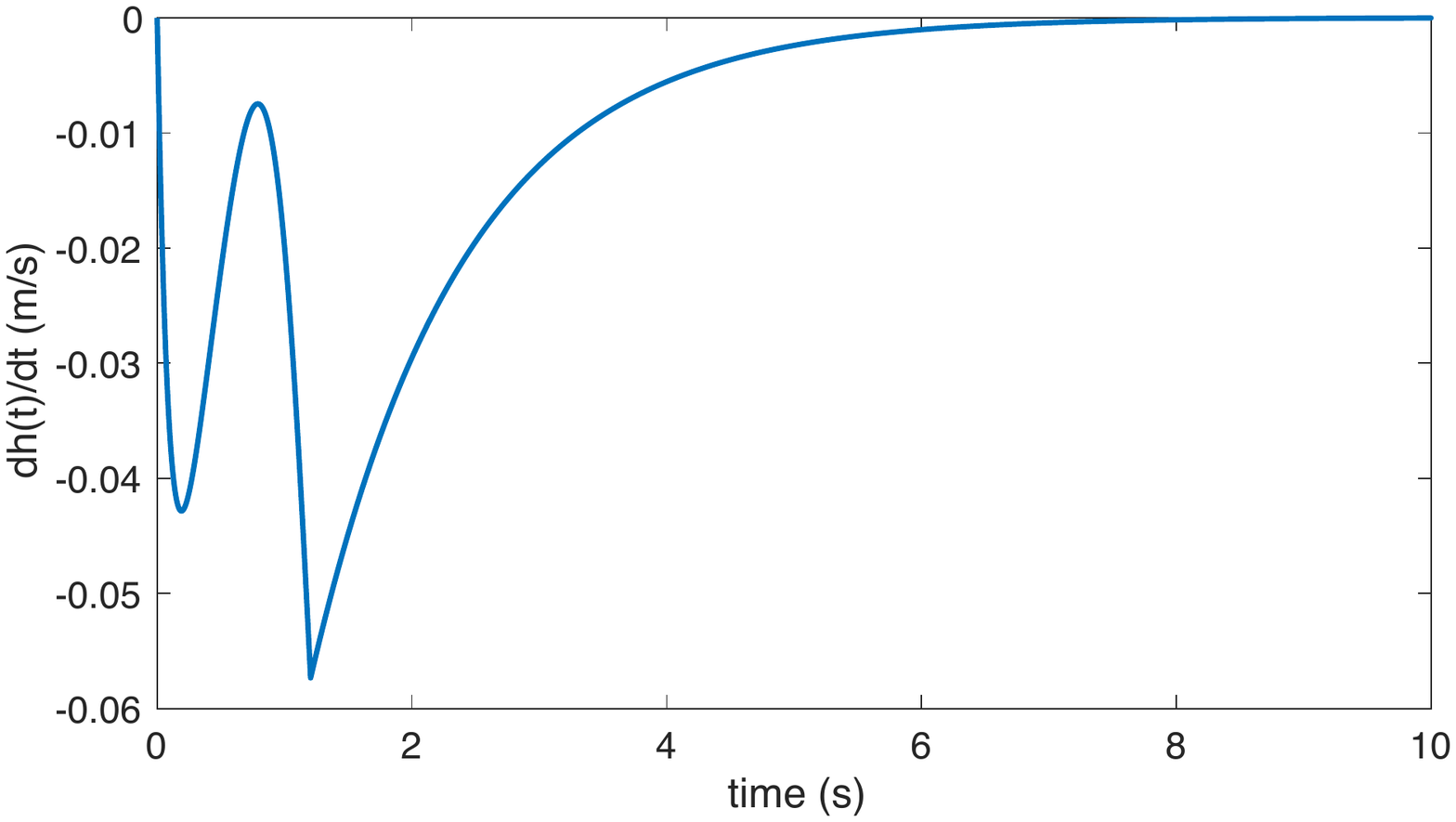}
\subcaption{State trajectory, $\dot{h}(t)$}
\end{subfigure}
\begin{subfigure}[b]{.45\textwidth}
\includegraphics[width=7.5cm]{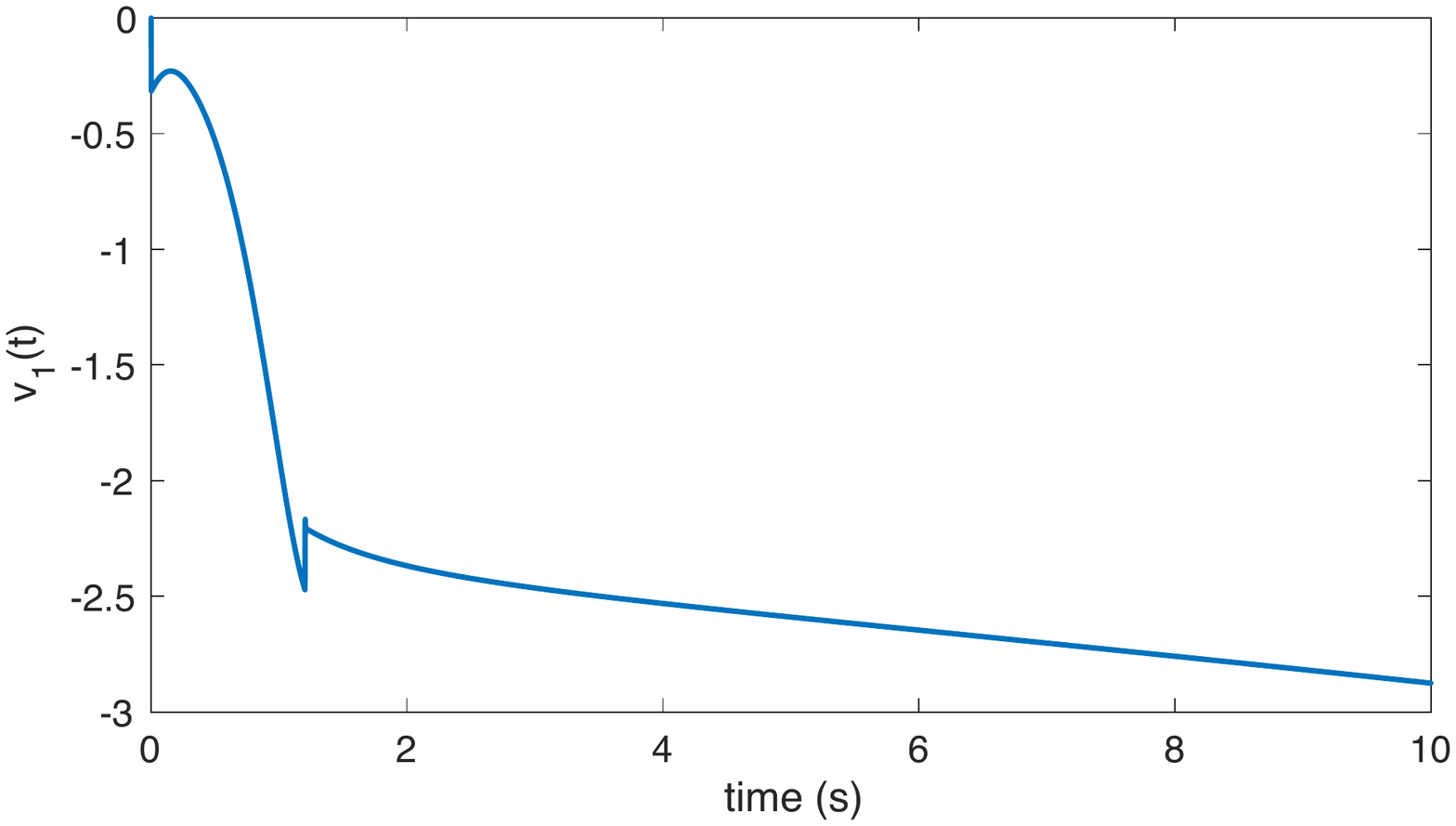}
\subcaption{Sliding mode control input signal, $v_1(t)$}
\end{subfigure}
\begin{subfigure}[b]{.45\textwidth}
\includegraphics[width=7.7cm]{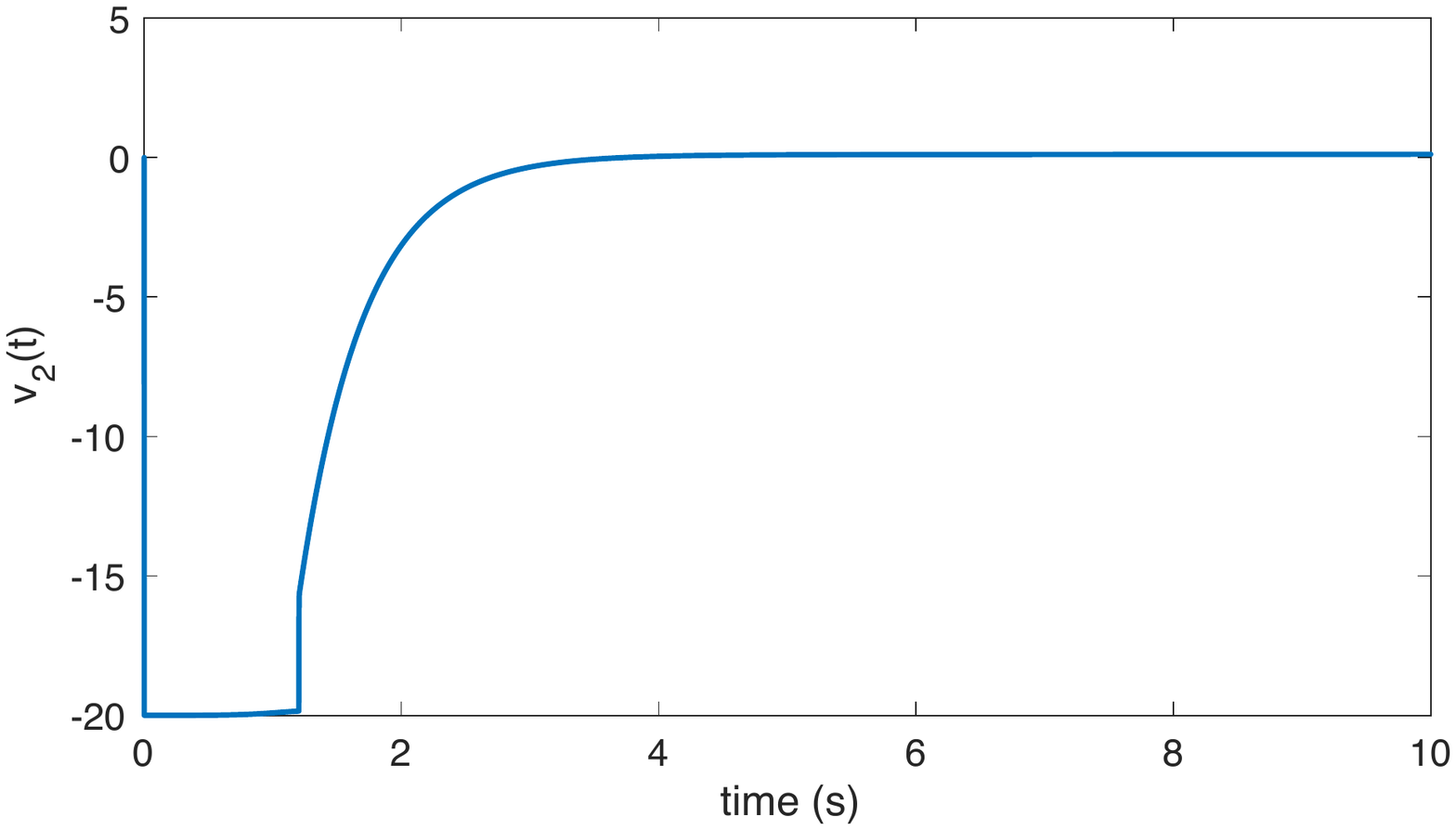}
\subcaption{Sliding mode control input signal, $v_2(t)$}
\end{subfigure}
\caption{Time histories of the states and input signals for $\epsilon=0.01$, $t_h=0.0005$ (sec) and, $k=20$ }
\label{fig:control_results_smooth1}
\end{figure}

\begin{figure}[h!]
\centering
\begin{subfigure}[b]{.45\textwidth}
\includegraphics[width=7.5cm]{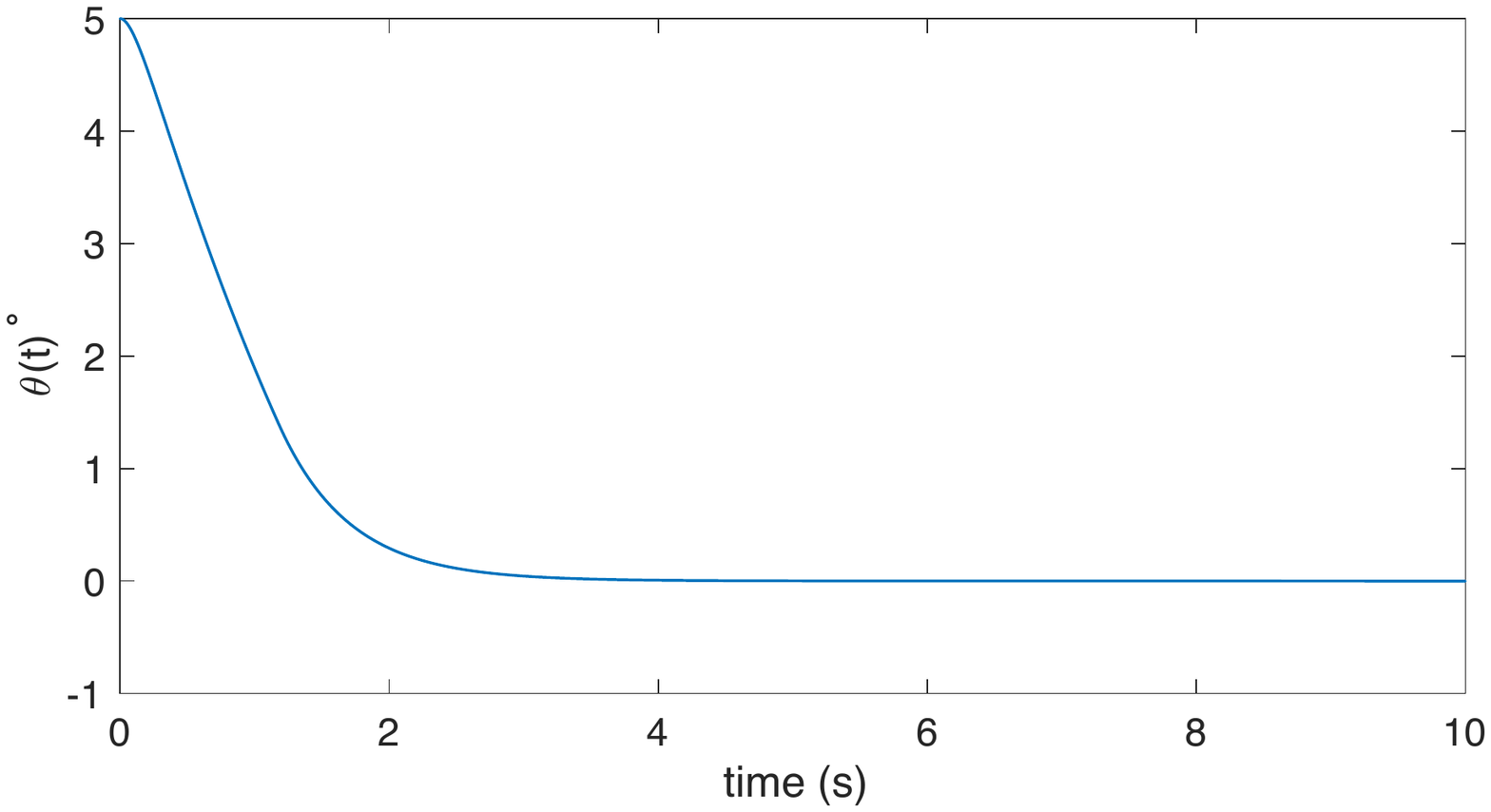}
\subcaption{State trajectory, $\theta(t)$}
\end{subfigure}
\begin{subfigure}[b]{.45\textwidth}
\includegraphics[width=7.7cm]{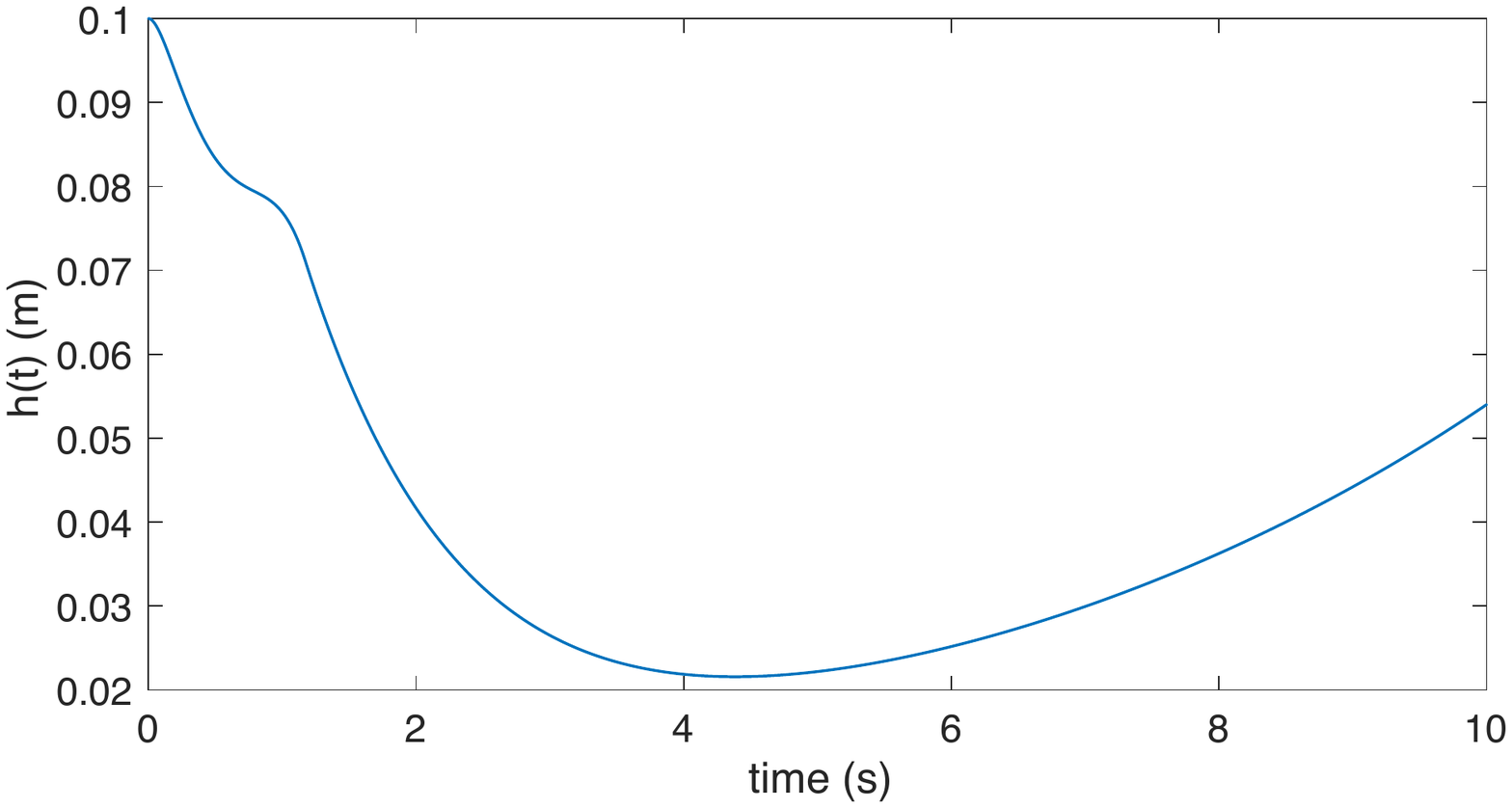}
\subcaption{State trajectory, $h(t)$}
\end{subfigure}
\begin{subfigure}[b]{.45\textwidth}
\includegraphics[width=7.5cm]{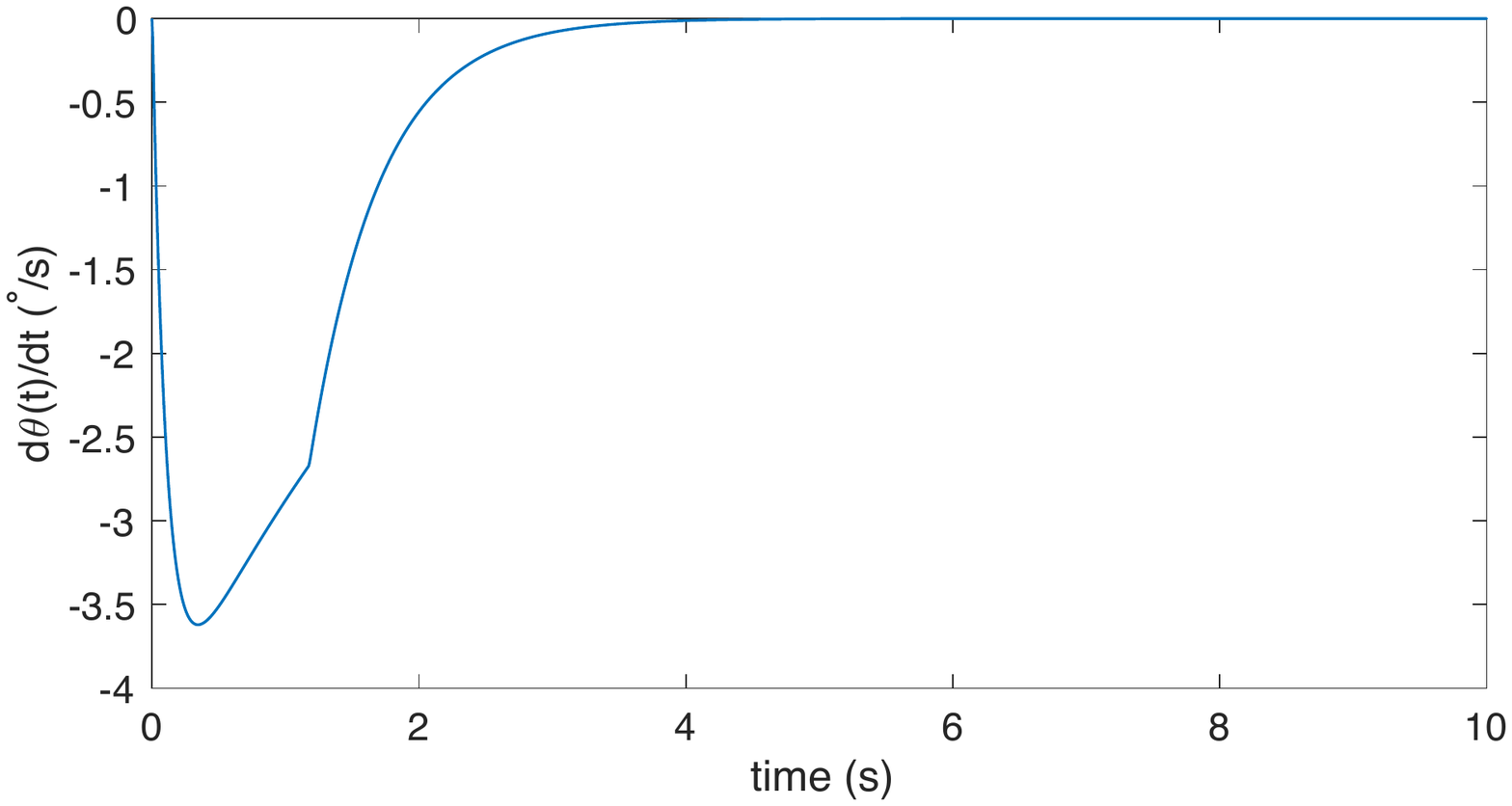}
\subcaption{State trajectory, $\dot{\theta}(t)$}
\end{subfigure}
\begin{subfigure}[b]{.45\textwidth}
\includegraphics[width=7.7cm]{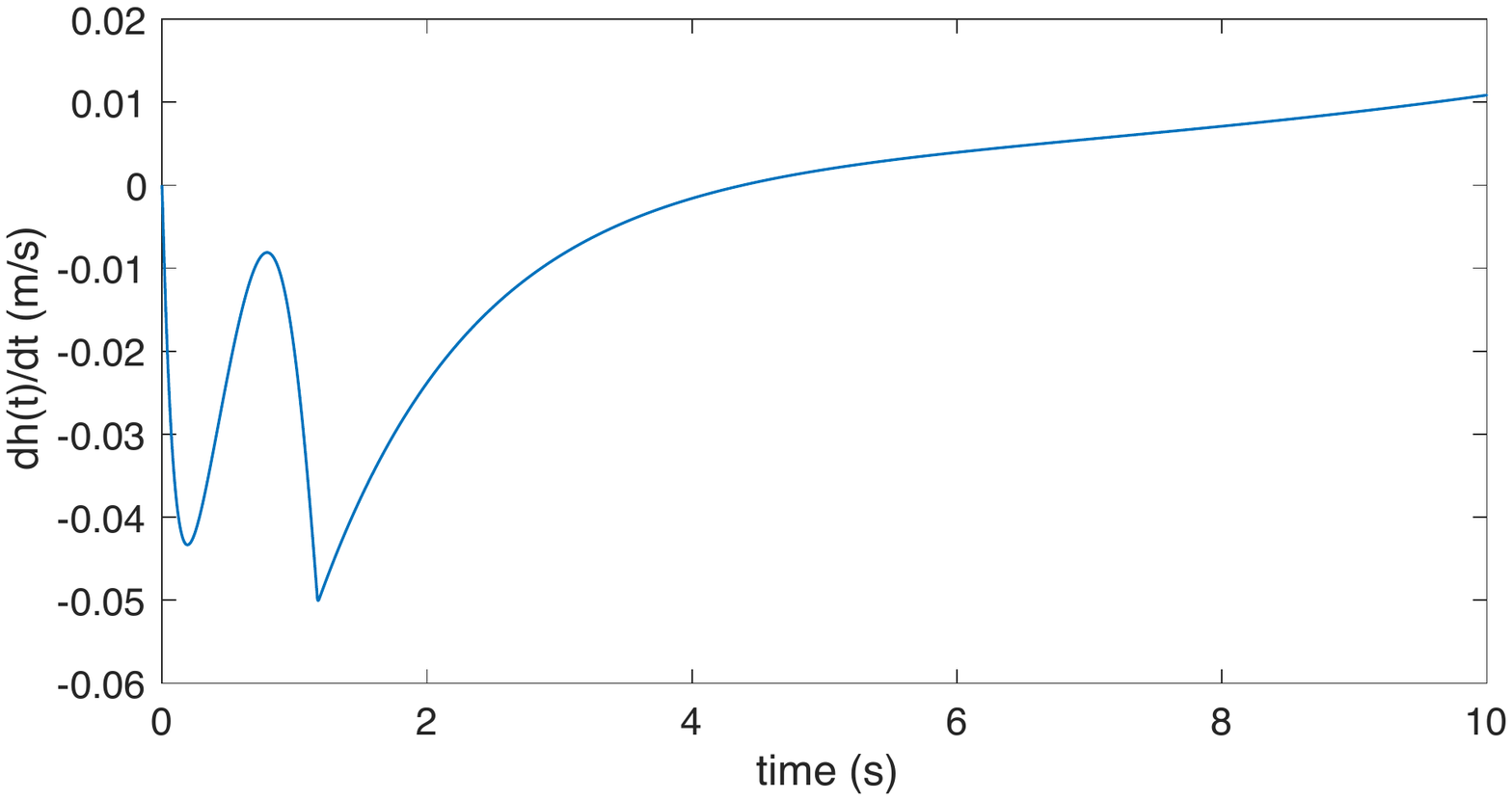}
\subcaption{State trajectory, $\dot{h}(t)$}
\end{subfigure}
\begin{subfigure}[b]{.45\textwidth}
\includegraphics[width=7.5cm]{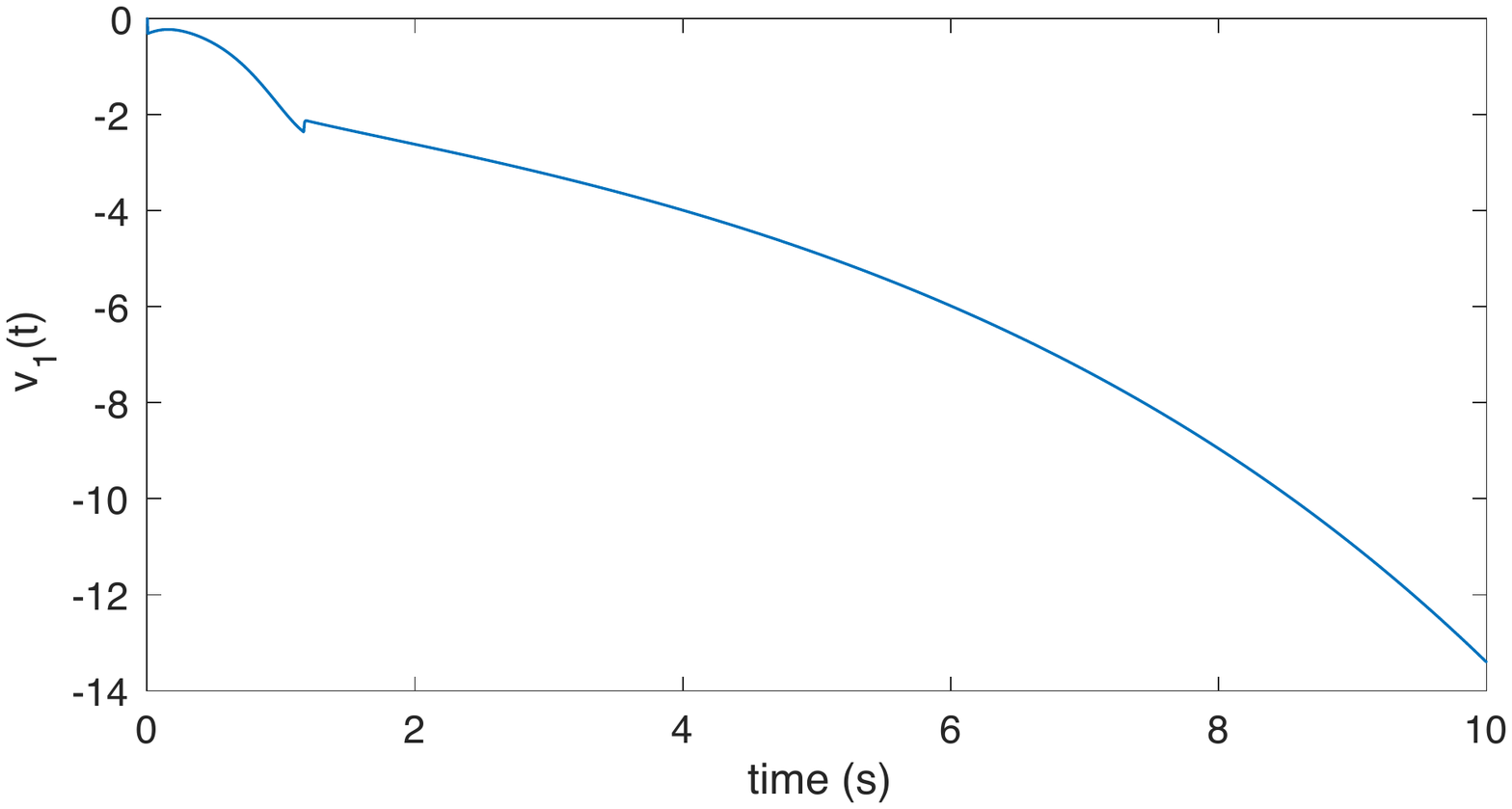}
\subcaption{Sliding mode control input signal, $v_1(t)$}
\end{subfigure}
\begin{subfigure}[b]{.45\textwidth}
\includegraphics[width=7.7cm]{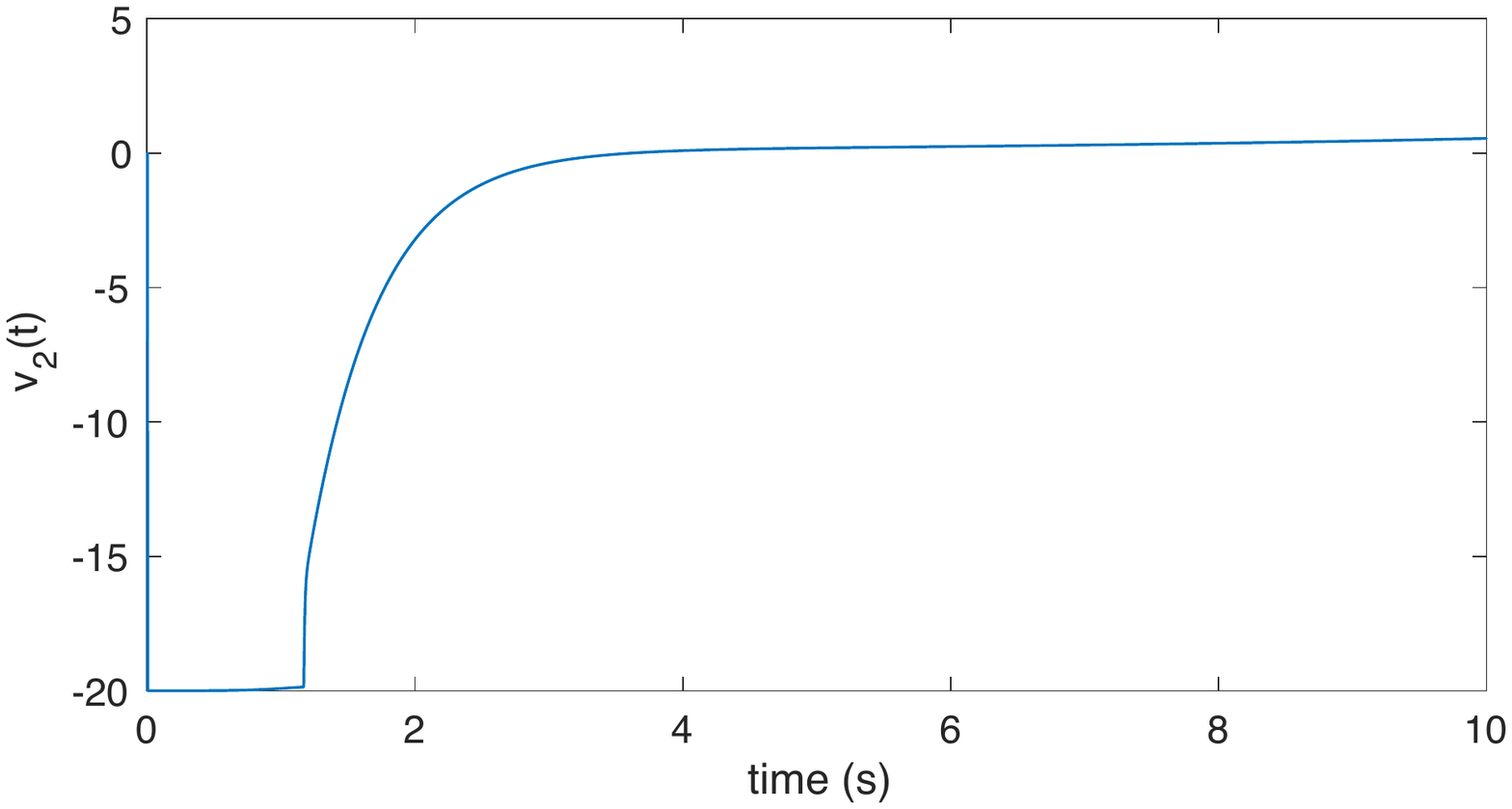}
\subcaption{Sliding mode control input signal, $v_2(t)$}
\end{subfigure}
\caption{Time histories of the states and input signals for $\epsilon=0.1$, $t_h=0.001$ (sec) and, $k=20$ }
\label{fig:control_results_smooth2}
\end{figure}

\section{Results and Conclusion}
\nocite{*}
In this paper, we have derived an explicit bound for the error of approximation for certain history dependent operators that are used in construction of robotic FDE's in ~\cite{Dadashi2016} and this paper. The numerical simulations presented validate our results. We establish uniform upper bounds on their accuracy of the approximations.  The uniform $\mathcal{O}(2^{-(\alpha+1)j})$ rates of approximation for grid resolution $j$ depend on the Holder coefficient $\alpha$ that describes the smoothness of the ridge functions that define the history dependent kernels.
In Section \ref{sec:exist} we prove the existence and uniqueness of a local solution for the special case of functional differential equations with history dependent terms shown in Equation \ref{eq:first_order}. Since the functional differential equation of interest evolves in an infinite dimensional space, we construct finite dimensional approximations with grid resolution $j$. We further show that the solution of the finite dimensional distributed parameter system converges to the solution of the infinite dimensional FDE as the resolution is refined. Finally, we propose an adaptive control strategy to identify and compensate the unknown history dependent dynamics.

\section*{Appendix A: Wavelets and Approximation Spaces over the Triangular Domain}
\label{App:B}
We define the multiscaling functions 
$$\phi_{j,k}(x) =1_{\Delta_{i_1,i_2,\hdots,i_j}}(x)/\sqrt[]{m(\Delta_{i_1,i_2,\hdots,i_j})}
$$
in which 
\begin{align*}
1_{\Delta_{s}}(x)=\left \{
\begin{array}{lll}
1 & x \in \Delta_s\\
0 & \text{otherwise}
\end{array}
\right.
\end{align*}
and $m(\Delta_{i_1,i_2,\hdots,i_j})$ is the area of a triangle in the level $j$ refinement.
We have defined
$
(hf)(t)\circ \mu= \iint_\Delta \kappa(s,t,f)\mu(s)ds.
$
The approximation  $(h_j f)(t)\circ \mu$ of this operator is given by
\begin{align*}
(h_j f)(t)\circ \mu= \iint_\Delta \sum_{l\in \Gamma_j} 1_{\Delta_{j,l}}(s) \kappa(\xi_{j,l},t,f) \mu(s)ds,
\end{align*}
where $\xi_{j,l}$ is the quadrature point of number $l$ triangle of grid level $j$.  We approximate $\mu(s) \approx \sum_{m\in \Gamma_j} \mu_{j,m}\phi_{j,m}(s)$. 
Therefore,
\begin{align*}
(& h_j f)(t)\circ \mu_j\\ & =\iint_S \left( \sum_{l\in \Gamma_j} 1_{\Delta_{j,l}}(s) \kappa(\xi_{j,l},t,f) \sum_{m\in \Gamma_j} \mu_{j,m}\phi_{j,m}(s)\right) ds\\
& =\sum_{l\in \Gamma_j} \sum_{m\in \Gamma_j}\kappa(\xi_{j,l},t,f) \left(\iint_S 1_{\Delta_{j,l}}(s) \phi_{j,m}(s) ds\right) \mu_{j,m}\\
& =\sum_{l\in \Gamma_j} \kappa(\xi_{j,l},t,f) \sqrt{m(\Delta_{j,l})}\mu_{j,l}.
\end{align*}
\vspace{-1mm}
For an orthonormal  basis $\left \{ \phi_k \right \}_{k=1}^\infty$ of the separable Hilbert space $P$, we define the finite dimensional  spaces for constructing approximations as $P_n:=\text{span}\left  \{  \phi_k\right \}_{k=1}^n$. The approximation error $E_n$  of $P_n$ is given by 
$$
E_n(f):= \inf_{g\in P_n} \| f-g\|_P.
$$
The approximation space $\mathcal{A}^\alpha_2$ of order $\alpha$ is defined as the collection of functions in $P$ such that
$$
\mathcal{A}^\alpha_2 := \biggl \{
f\in P \biggl |    |f|_{\mathcal{A}^\alpha_2} 
:= \left \{ \sum_{n=1}^\infty (n^\alpha E_n(f))^2 \frac{1}{n} \right \}^{1/2} < \infty
\biggr\}.
$$
For our purposes, the approximation spaces are easy to characterize:  they consist of all functions $f \in P$ whose generalized Fourier coefficients decay sufficiently fast.  That is, $f\in \mathcal{A}^\alpha_2$ if and only if 
$$
\sum_{k=1}^\infty k^{2\alpha}|(f,\phi_k)|^2 \leq C
$$
for some constant $C$.

\section*{Appendix B: The Projection Operator $\Phi_{J\rightarrow j}$}
\label{App:B}
The orthogonal projection operator $\Phi_{J\rightarrow j}: V_J\rightarrow V_j$ maps a distributed parameter  $\mu_J$ to $\mu_j$ i.e. $\Phi_{J\rightarrow j}:\mu_J \mapsto \mu_j$.
\begin{figure}[h!]
\centering
\includegraphics[width=0.2\textwidth]{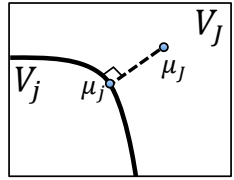}
\caption{Projection Operator $\Phi_{J\rightarrow j}:V_J\rightarrow V_j$}
\label{fig:Proj}
\end{figure}
By exploiting the orthogonality property of the operator we have
\begin{align*}
\iint_\Delta \left( \sum_{m\in\Gamma_j} \mu_{j,m}\phi_{j,m}(s)-\sum_{l\in\Gamma_J} \mu_{J,l}\phi_{J,l}(s)\right) \phi_{j,n}(s) ds=0.
\end{align*}
Therefore, we can write
\begin{align*}
\sum_{m\in\Gamma_j}& \left( \iint_\Delta \phi_{j,m}(s)\phi_{j,n}(s) ds \right)\mu_{j,m}  =\sum_{l\in\Gamma_J} \left( \iint_\Delta \phi_{J,l}(s)\phi_{j,n}(s) ds \right) \mu_{J,l}.
\end{align*}
Since orthogonality implies 
$\iint_\Delta \phi_{j,m}(s)\phi_{j,n}(s) ds=\delta_{m,n},$
we conclude that
 \begin{align*}
\mu_{j,n}=\sum_{l\in\Gamma_j} \left(\iint_\Delta \phi_{j,n}(s)\phi_{J,l}(s) ds\right)\mu_{J,l}.
\end{align*}
From Theorem 1 we have 
\begin{align*}
|(h_{j}f)(t)\circ \Pi_{j}\mu - (hf)(t)\circ \mu | \leq \tilde{C}2^{-\alpha j}, \end{align*}
with
\begin{align*}
(hf)(t)\circ \mu = \iint_\Delta k(s,t,f)\mu(s)\mathrm{d}s,\\
(h_{j}f)(t)\circ\mu = \iint_\Delta \sum 1_{\Delta_j,l}(s)k(\zeta_{j,l},t,f)\mu(s)\mathrm{d}s,
\end{align*}
where $\mu \in P = L^{2}(\Delta) $  and we approximate $ \mu (s) \approx \sum_{l\in\Gamma_J}\mu_{J,l}\phi_{J,l}(s) \in V_J$. 
To  implement this for the finest grid $J$, we compute
\begin{align*}
(h_J f)(t)\circ \mu_J &= (h_j f)(t)\circ \Pi_J \mu_J,\\
&=\iint\left(\sum 1_{\Delta_J,l}(s)k(\zeta_{J,l},t,f)\sum_{m\in\Gamma_J}\mu_{J,m}\phi_{J,m}(s)\right)\mathrm{d}s,\\
&=\sum_{l\in\Gamma_J}\sum_{m\in\Gamma_J}k(\zeta_{J,l},t,f)\left(\iint 1_{\Delta_J,l}(s)\phi_{J,m}(s)\mathrm{d}s\right)\mu_{J,m},\\
&=\sum_{l\in\Gamma_J}\frac{k(\zeta_{J,l},t,f)\mu_{J,l}}{\left(\sqrt[]{m(\Delta_{J,l}})\right)},
\end{align*}
when  $\sqrt[]{m(\Delta_{J,l})}$ is the area of the corresponding triangle $\Delta_{J,l}$ in the grid having resolution level $J$.

\section*{Appendic C: Gronwall's Inequality}
\label{App:C}
We employ the integral form of Gronwall's Inequality to obtain our final convergence result.  Many forms of Gronwall's Inequality exist, and we will use a particularly simple version.  See Section 3.3.4 in \cite{is2012}.  If the piecewise continuous function $f$ satisfies the inequality
$$
f(t) \leq \alpha(t) + \int_0^t \beta(s) f(s) ds
$$
with some piecewise continuous functions $\alpha,\beta$ where $\alpha$ is nondecreasing, then
$$
f(t) \leq \alpha(t) e^{\int_0^t\beta(s)ds}.
$$

\section*{Appendix D: Modeling of a Prototypical Wing Section}
\label{App:D}
Figure \ref{fig:wing_model} shows a simplified model of the wing. In the figure we denote the center of mass by $c.m.$, $A$ is the aerodynamic center, and $O$ is the elastic axis of the wing. The constants $K_h$ and $K_\theta$ are the linear and torsional stiffness, and $h$ is the distance from origin to point $O$ in the fixed reference frame. We denote by $x_\theta$ the distance between point $O$ and center of mass, whereas $x_a$ is the distance between $O$ and $A$. Point $O$ is the origin for the body fixed reference frame. 

% \begin{figure}[h!]
% \includegraphics[width=0.5\textwidth,center]{}
% \caption{Wing Model}
% \label{fig:wing}
% \end{figure}

We employ The Euler-Lagrange technique to derive the equation of motion for the depicted wing model. The function  $L(\theta,\dot{\theta})$ is the history dependent lift force acting at the aerodynamic center, and $M(\theta,\dot{\theta})$ is the history dependent aerodynamic moment about point $A$. The variables $L_{\beta_1}$ and  $L_{\beta_2}$ are the actuating forces acting at point $D$, and $\beta_1$, $\beta_2$ are the angles between the midchord of the  wing and the trailing edge and leading edge flaps, respectively. 

The position vector of the mass center is given as
$$
\mathbf{r}_{c.m.} = h\hat{n}_1 - x_\theta\hat{b}_2,
$$
and therefore the corresponding velocity of point $C$ is 
$$
\mathbf{\dot{r}}_{c.m.} = \dot{h}\hat{n}_1 + x_\theta \dot{\theta}\hat{b}_1.
$$
The rotation matrix for transformation between inertial frame of reference to body fixed frame of reference is 
$$
\begin{bmatrix}
\hat{b}_1\\
\hat{b}_2
\end{bmatrix}
=
\begin{bmatrix}
\cos \theta & \sin \theta \\
-\sin \theta & \cos \theta
\end{bmatrix}
\begin{bmatrix}
\hat{n}_1\\
\hat{n}_2
\end{bmatrix}.
$$
The kinetic energy is computed to be
$$
T = \frac{1}{2}m(\mathbf{r}_{c.m.}.\mathbf{r}_{c.m.} ) + \frac{1}{2} I_\theta {\dot{\theta}^2},
$$
$$
T=\frac{1}{2}m(\dot{h}^2+x_\theta^2 \dot{\theta}^2 +2 x_\theta \dot{h} \dot{\theta} \cos{\theta}) + \frac{1}{2}I_\theta {\dot{\theta}^2},
$$
and the corresponding potential energy is 
$$
V=\frac{1}{2}K_h h^2 + \frac{1}{2}K_\theta \theta ^2. %+m g (h+x_\theta \sin{\theta})
$$
therefore we can write Lagrangian as $L=T-V$. We apply Euler-Lagrange equations to write the equation of motion as follows

\begin{multline}
\begin{bmatrix}
m & m x_\theta \cos{\theta} \\
m x_\theta \cos{\theta} & m x_\theta^2 + J
\end{bmatrix}
\begin{bmatrix}
\ddot{h} \\
\ddot{\theta}
\end{bmatrix}
+
\begin{bmatrix}
0 & -m x_\theta \dot{\theta}\sin{\theta} \\
0 & 0
\end{bmatrix}
\begin{bmatrix}
\dot{h} \\
\dot{\theta}
\end{bmatrix}
+
\begin{bmatrix}
K_h & 0 \\
0 & K_{\theta}
\end{bmatrix}
\begin{bmatrix}
h \\
\theta
\end{bmatrix}\\
=
%\begin{bmatrix}
%-m g \\
%-m g x_\theta \cos{\theta}
%\end{bmatrix}
%+
\begin{bmatrix}
L(\theta,\dot{\theta}) \cos{\theta} \\
M(\theta,\dot{\theta}) + x_a L(\theta,\dot{\theta})
\end{bmatrix}
+
\begin{bmatrix}
-L_{\beta_1} \cos {(\theta + \beta_1)} -L_{\beta_2} \cos {(\theta + \beta_2}) \\
-L_{\beta_1} (e_1 + d_1 \cos{\beta_1})+L_{\beta_2} (e_2 + d_2 \cos{\beta_2}).
\end{bmatrix}
\end{multline}
The above equation is written in the form of a standard robotic equations of motion $
M(q(t))\ddot{q}(t)+C(q(t),\dot{q}(t))\dot{q}(t)+K(q(t))=Q_a (t) + \tau(t)$, where $q = [h \: \theta]^T $. We have discussed control applications for such systems in detail in Section \ref{sec:intro}. In addition, we employ a simplified version of this equation to validate our online identification and adaptive control strategy in Section \ref{sec:numerics2}.

\end{document}